\newtheorem{thm}{Theorem}[section]
\newtheorem{lem}[thm]{Lemma}
\newtheorem{prop}[thm]{Proposition}
\newtheorem{conj}[thm]{Conjecture/Question}
\theoremstyle{definition}
\newtheorem{defn}{Definition}[section]
\theoremstyle{remark}
\newtheorem{rem}{Remark}[section]
\makeatletter \@addtoreset{equation}{section}
\newcommand{\thmref}[1]{Theorem~\ref{#1}}
\newcommand{\lemref}[1]{Lemma~\ref{#1}}
\newcommand{\propref}[1]{Proposition~\ref{#1}}
\def\a{\alpha}
\def\b{\beta}
\def\i{\sqrt{-1}}
\def\l{\lambda}
\def\p{\partial}
\def\vphi{\varphi}
\def\cH{{\cal H}}
\def\tr{\rm tr}
\def\cH{{\mathcal H}}
\def\RR{{\mathbb R}}
\def\and{\quad{\rm and}\quad}
\let\lra=\longrightarrow
\def\mapright{\xrightarrow}
\def\mapright\#1{\,\smash{\mathop{\lra}\limits^{\#1}}\,}
\def\om{\omega}
\def\Om{\Omega}
\def\tri{\triangle}
\def\ul{\underline}
\newcommand{\as}{\underline{S}}
\numberwithin{equation}{section}
\newcommand{\Rmnum}[1]{\expandafter\@slowromancap\romannumeral #1@}
\title[$I$-properness of Mabuchi's $K$-energy]{$I$-properness of Mabuchi's $K$-energy}
\author[Kai Zheng]{Kai Zheng}
  \address{Institut f\"ur Differentialgeometrie
Gottfried Wilhelm Leibniz Universit\"at Hannover
Welfengarten 1 (Hauptgeb\"aude)
30167 Hannover}
  \email{zheng@math.uni-hannover.de}
\keywords{}
\subjclass[2000]{Primary 53C44; Secondary 58E30,  58B20}
\begin{document}
\maketitle
\begin{abstract}
Over the space of K\"ahler metrics associated to a fixed K\"ahler class, we first prove the lower bound of the energy functional $\tilde E^\b$ \eqref{E}, then we provide the criterions of the geodesics rays to detect the lower bound of $\tilde {\mathfrak J}^\b$-functional \eqref{Jbeta}. They are used to obtain the properness of Mabuchi's $K$-energy. The criterions are examined under \eqref{jflowcondition1} by showing the convergence of the negative gradient flow of $\tilde {\mathfrak J}^\b$-functional.
\end{abstract}
\tableofcontents
\section{Introduction}
%%%%%%%%%%%%%%%%%%%%%%%%%%%%%%%%%%%%%%%%%%%%%%%%%%%%%%%%%%%%%%%%%%%%%%%%%%%%%%%%%%%%%%%%%%%%%%%%%%%%%%%%%%%%%%%%%%%%%%%%%%%%%%%%
Let $M$ be a compact K\"ahler manifold and $\Om$ be an arbitrary K\"ahler class. We choose a K\"ahler metric $\om$ in $\Om$ and denote the space of K\"ahler potentials associated to $\Om$ by
$$\cH_\Om=\{\varphi\in C^{\infty}(M, \RR)\;|\;\om_\vphi=\om+\i\p\bar\p \varphi>0\}.$$
Mabuchi's $K$-energy \cite{MR867064} has the explicit formula (cf.
\cite{MR1772078}  \cite{MR1787650}) for any $\vphi\in \cH_\Om$,
\begin{align}\label{K energy}
\nu_\om(\vphi) &=E_\om(\vphi)+\as \cdot D_\om(\vphi)+j_\om(Ric(\om),\vphi).
\end{align}
In which,
\begin{align*}
E_\om(\vphi)&=\int_M \log\frac{\om^n_\vphi}{\om^n}\,\om^n_\vphi ,\\
D_\om(\vphi)&=\frac{1}{V}\int_M\vphi\om^n-J_\om(\vphi),\\
J_\om(\vphi)&=\frac{\i}{V}\sum_{i=0}^{n-1}\frac{i+1}{n+1}
\int_{M}\p\vphi\wedge\bar\p\vphi\wedge\om^{i}\wedge\om^{n-1-i}_{\vphi}.
\end{align*}
and
\begin{align*}%\label{little j}
&j_\om(Ric(\om),\vphi)\\
&=\frac{-1}{V}\sum_{i=0}^{n-1}\frac{n!}{(i+1)!(n-i-1)!}\int_{M}\vphi\cdot
Ric(\om)\wedge\om^{n-1-i}\wedge(\i\p\bar\p\vphi)^{i}\nonumber.
\end{align*}
We also recall Aubin's $I$-function,
\begin{align*}
I_\om(\vphi)&=\frac{1}{V}\int_{M}\vphi(\om^{n}-\om_{\vphi}^{n})
=\frac{\i}{V}\sum_{i=0}^{n-1}\int_{M}\p\varphi\wedge\bar\p\vphi\wedge\om^{i}\wedge\om_\vphi^{n-1-i}.
\end{align*}

The properness of the $K$-energy $\nu_\om(\vphi)$ is a kind of "coercive" condition in the variational theory. It was introduced in Tian \cite{MR1471884}, which states that there is a nonnegative, non-decreasing function $\rho(t)$ satisfying $\lim_{t\rightarrow\infty}\rho(t)=\infty$ such that $\nu_\om(\vphi)\geq \rho(I_\om(\vphi))$ for all $\vphi\in\mathcal H_\Om$. It is conjectured to be equivalent to the existence of the constant scalar curvature K\"ahler (cscK) metrics (Conjecture 7.12 in Tian \cite{MR1787650}).

When $\Om=- C_1(M)$ or $C_1(M)=0$, the function $\rho$ is proved to be linear in Tian \cite{MR1787650}, Theorem 7.13, i.e. there are two positive constants $A$ and $B$ such that for all $\vphi\in \mathcal H_\Om$,
\begin{align}\label{Iproper}
\nu_\om(\vphi) \geq A I_\om(\vphi)- B.
\end{align}
In order to destine different notions of properness, in our paper, we say the $K$-energy is $I$-proper, if \eqref{Iproper} holds.

When $\Om=C_1(M)>0$ and there is no holomorphic vector field on a K\"ahler-Einstein manifold $M$, Phong-Song-Sturm-Weinkove \cite{MR2427008} proved that Ding functional $F_\om(\vphi)$ (defined in Ding \cite{MR967024}) satisfies $$F_\om(\vphi) \geq A I_\om(\vphi)- B.$$ This inequality is a generalisation of the Moser-Trudinger inequalities on the sphere \cite{MR677001}\cite{MR0301504}\cite{MR0216286}. The $I$-properness of Ding functional also implies \eqref{Iproper} by using  the identity between $\nu_\om(\vphi)$ and $F_\om(\vphi)$ in Ding-Tian \cite{dingtianf}, we include the proof in \lemref{Ipropertokproper} for readers' convenience.

There are different notions of properness. In \cite{MR2471594}, Chen defined another properness of the $K$-energy
regarding to the entropy $E_\om(\vphi)$. The equivalent relation between the $I$-properness and the $E$-properness is discussed in \cite{MR3229802}. Chen also suggest another properness which means that
  the $K$-energy bounds the geodesic distance function. He furthermore
   conjectured that $d$-properness should be a necessary condition of
    the existence of the cscK or the general extremal K\"ahler metrics (see Conjecture/Question 2
    in \cite{MR1772078} and Conjecture/Question 6.1 in
    \cite{Chen:2008nr}).\\

Let $\chi$ be a closed $(1,1)$-form.
The $\mathfrak{J}$-functional is defined to be the last two terms of the $K$-energy with $Ric(\om)$ replaced by $\chi$,
\begin{align*}
\mathfrak{J}_{\om,\chi}(\vphi)&=\underline S \cdot D_\om(\vphi)+  j_\om(\chi,\vphi).
\end{align*}
We introduce a new parameter $\b$ within a range
$$0\leq\b<\frac{n+1}{n}\a.$$
We then define a new functional to be
\begin{align}\label{Jbeta}
\tilde {\mathfrak J}^\b_{\om,\chi}(\vphi)=\mathfrak J_{\om,\chi}(\vphi)+\b J_\om(\vphi).
\end{align}

Now we return back to the formula of the $K$-energy. With the notations above it is split into
\begin{align}\label{decomposek}
\nu_\om(\vphi)=E_\om(\vphi)-\b J_\om(\vphi)+\tilde {\mathfrak J}^\b_{\om,Ric(\om)}(\vphi).
\end{align}
The lower bound of $E_\om(\vphi)$ is $\a I_{\om}(\varphi) -C$ in \lemref{tianinequality}.
Inserting it into the $K$-energy, we arrive at the lower bound
\begin{align*}
\nu_\om(\vphi)\geq \a I_{\om}(\varphi) -C -\b J_\om(\vphi)+\inf_{\vphi\in\mathcal H_\Om} \tilde{\mathfrak J}^\b_{\om,Ric(\om)}(\vphi).
\end{align*}
Note that $I$-functional is equivalent to the $J$-functional, $$\frac{1}{n+1}I_{\om}(\varphi)\leq J_{\om}(\varphi)\leq\frac{n}{n+1}I_{\om}(\varphi),$$ then we have
\begin{align}\label{iproper}
\nu_\om(\vphi)\geq (\a-\frac{n\b}{n+1}) I_{\om}(\varphi) -C +\inf_{\vphi\in\mathcal H_\Om} \tilde{\mathfrak J}^\b_{\om,Ric(\om)}(\vphi).
\end{align}
From this inequality, we observe that in order to prove the $I$-properness of the $K$-energy, it suffices to obtain the lower bound of the functional $\tilde{\mathfrak J}^\b_{\om,Ric(\om)}$.\\

The critical points of $\tilde{\mathfrak J}^\b_{\om,\chi}$ satisfy a new fully nonlinear equation in $\cH_\Om$,
\begin{align}\label{lma}
n\cdot\chi\wedge\om^{n-1}_\vphi=c_\b \cdot \om^{n}_\vphi+\frac{\b}{V}\om^n.
\end{align}
The constant $c$ is a topological constant determined by $$c_\beta=n\frac{[\chi]\cdot\Om^{n-1}}{\Om^{n}}-\frac{\b}{V}.$$
We call such $\om_\vphi$ a $\tilde{\mathfrak J}^\b$-metric.
We say that $\chi$ is semi-definite
$$\text{ if it is negative semi-definite or positive semi-definite.}$$ In these degenerate situation, \eqref{lma} might have more than one solution. We first prove the lower bound the $\tilde{\mathfrak{J}}^\b$-functional, when there is a $\tilde{\mathfrak{J}}^\b$-metric in $\Om$.
\begin{thm}\label{c1 nega j low bdd}
Assume that $\chi$ is negative semi-definite (positive semi-definite) and there is a $\tilde{\mathfrak{J}}^\b$-metric in $\Om$, then all $\tilde{\mathfrak{J}}^\b$-metrics has the same critical value and $\tilde{\mathfrak{J}}^\b$ has lower (resp. upper) bound.
\end{thm}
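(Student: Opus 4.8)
The plan is to deduce both assertions from the convexity of $\tilde{\mathfrak J}^\b_{\om,\chi}$ along geodesics in $\cH_\Om$. Fix a $\tilde{\mathfrak J}^\b$-metric $\om_\psi$, so that $\psi\in\cH_\Om$ solves \eqref{lma}, and let $\vphi\in\cH_\Om$ be arbitrary. I would join $\psi$ to $\vphi$ by the geodesic $\vphi_t$, $t\in[0,1]$, with $\vphi_0=\psi$ and $\vphi_1=\vphi$, solving the degenerate Monge--Amp\`ere equation $\ddot\vphi_t-|\p\dot\vphi_t|^2_{\om_{\vphi_t}}=0$; a $C^{1,1}$ solution exists by Chen \cite{MR1772078}. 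Everything then reduces to the sign of $g''$ for $g(t):=\tilde{\mathfrak J}^\b_{\om,\chi}(\vphi_t)$.

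To compute $g''$ I differentiate the three pieces of $\tilde{\mathfrak J}^\b_{\om,\chi}=\as\,D_\om+j_\om(\chi,\cdot)+\b J_\om$ separately, writing $f=\dot\vphi_t$ and $g_t=\om_{\vphi_t}$. Two of them are classical: the Monge--Amp\`ere energy $D_\om$ is affine along the geodesic ($\frac{d^2}{dt^2}D_\om=0$), and $J_\om$ is convex with $\frac{d^2}{dt^2}J_\om=\frac1V\int_M|\p f|^2_{g_t}\,\om^n$. For the remaining term, using that the first variation of $j_\om(\chi,\cdot)$ has density $-\frac nV\chi\wedge\om_\vphi^{n-1}$ (consistent with \eqref{lma}), differentiating once more, integrating by parts, and inserting $\ddot\vphi_t=|\p f|^2_{g_t}$, I obtain
\begin{align*}
\frac{d^2}{dt^2}j_\om(\chi,\vphi_t)=\frac{n(n-1)}{V}\int_M\i\p f\wedge\bar\p f\wedge\chi\wedge\om_{\vphi_t}^{n-2}-\frac nV\int_M|\p f|^2_{g_t}\,\chi\wedge\om_{\vphi_t}^{n-1}.
\end{align*}
The two integrands combine, via the pointwise identity $n(n-1)\,\i\p f\wedge\bar\p f\wedge\chi\wedge\om^{n-2}-n\,|\p f|^2_{g_t}\,\chi\wedge\om^{n-1}=-\tr_{g_t}(\i\p f\wedge\bar\p f\cdot\chi)\,\om^n$ (the products of traces cancelling), into $-\frac1V\int_M\chi(\nabla f,\overline{\nabla f})\,\om_{\vphi_t}^n$, where $\nabla f$ is the $(1,0)$-gradient of $f$ with respect to $g_t$ and $\chi(\nabla f,\overline{\nabla f})$ is the Hermitian form $\chi$ evaluated on it. Summing the three contributions gives the clean formula
\begin{align*}
g''(t)=\frac\b V\int_M|\p f|^2_{g_t}\,\om^n-\frac1V\int_M\chi(\nabla f,\overline{\nabla f})\,\om_{\vphi_t}^n.
\end{align*}

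The sign is now read off from $\chi$. When $\chi$ is negative semi-definite, $\chi(\nabla f,\overline{\nabla f})\le0$ while $\b\ge0$, so both terms are nonnegative and $g$ is convex; the positive semi-definite case is the concave counterpart, the $\chi$-contribution reversing sign. Convexity yields both conclusions. Since $\psi$ solves \eqref{lma} it is a critical point of $\tilde{\mathfrak J}^\b_{\om,\chi}$, so the first variation gives $g'(0)=\frac1V\int_M \dot\vphi_0\,\mathcal G(\psi)=0$, where $\mathcal G(\vphi)=n\chi\wedge\om_\vphi^{n-1}-c_\b\om_\vphi^n-\frac\b V\om^n$ is the Euler--Lagrange density; a convex function on $[0,1]$ with $g'(0)=0$ satisfies $g(1)\ge g(0)$, i.e. $\tilde{\mathfrak J}^\b_{\om,\chi}(\vphi)\ge\tilde{\mathfrak J}^\b_{\om,\chi}(\psi)$ for every $\vphi\in\cH_\Om$, which is the lower bound. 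If $\psi_0,\psi_1$ are two $\tilde{\mathfrak J}^\b$-metrics, then along the geodesic joining them $g$ is convex with $g'(0)=g'(1)=0$; monotonicity of $g'$ forces $g'\equiv0$, so $g(0)=g(1)$ and the two critical values coincide. The positive semi-definite case is handled by the same scheme with the inequalities reversed, producing the upper bound.

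The main obstacle is regularity: smooth geodesics between prescribed potentials need not exist, so $\vphi_t$ is only $C^{1,1}$, and at that regularity the termwise differentiation and the integrations by parts above are not immediately legitimate. I would proceed in the standard way by replacing $\vphi_t$ with Chen's smooth $\eps$-geodesics (solutions of the nondegenerate equation with right-hand side $\eps\,\om^n$), establishing the Hessian inequality for them up to an $O(\eps)$ error, and then letting $\eps\to0$ using the uniform $C^{1,1}$ estimates; the affineness of $D_\om$ and the convexity of $J_\om$ persist along $C^{1,1}$ geodesics. The genuinely delicate point is that in the semi-definite, as opposed to strictly definite, case the Hessian is only semidefinite, so strict convexity is unavailable and the equality of critical values must be extracted purely from $g'(0)=g'(1)=0$ as above.
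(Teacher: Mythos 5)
Your argument is correct and is essentially the paper's own proof: connect the $\tilde{\mathfrak J}^\b$-metric to an arbitrary potential by a $C^{1,1}$ geodesic, use the vanishing of the first variation at the critical point together with the convexity of $\tilde{\mathfrak J}^\b$ along geodesics coming from the second variation formula \eqref{2nd J functional} (which reduces to $-\frac1V\int_M\chi_{i\bar j}\dot\vphi^i\dot\vphi^{\bar j}\om_\vphi^n\ge0$ for $\chi$ negative semi-definite), and obtain the equality of critical values by applying the inequality with the two endpoints interchanged. The only differences are cosmetic — you rederive the second variation termwise and are more explicit about the $\eps$-geodesic approximation needed at $C^{1,1}$ regularity, which the paper passes over.
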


There is another functional $\tilde E^\b$ which is defined to be the square norm of the derivative of $\tilde{\mathfrak{J}}^\b$,
\begin{align}\label{E}
\tilde E^\b(\vphi)=\frac{1}{V}\int_M(c_\b-\tr _{\om_\vphi} \chi+\frac{\b}{V}\frac{\om^n}{\om^n_\vphi})^2\om^n_\vphi.
\end{align}
The $\tilde {\mathfrak J}^\b$-function and the $\tilde E^\b$-functional play the roles as the $K$-energy and the Calabi energy in the study of extremal K\"ahler metrics.
We next prove the lower bound of $\tilde E^\b$.

When $\chi$ is semi-definite,  according to the 2nd variation formula of $\tilde{\mathfrak{J}}^\b$ in \eqref{2nd J functional}, it is convex or concave along a $C^{1,1}$ geodesic ray $\rho(t)$. Thus the limit of its first derivative along $\rho(t)$ exists\begin{align}\label{euro}
\mathfrak{F}^\b(\rho)=\lim_{t\rightarrow\infty}\frac{1}{V}\int_{M}\frac{\p \rho}{\p t}(c_\b-\tr _{\om_\vphi} \chi+\frac{\b}{V}\frac{\om^n}{\om^n_\vphi}) \om_\vphi^n.
\end{align}

We require the following notions of the geodesic ray in the space of K\"ahler potentials.
\begin{defn}
We say a $C^{1,1}$ geodesic ray is
\begin{itemize}
  \item stable (semi-stable) if $\mathfrak{F}^\b>0$ ($\mathfrak{F}^\b\geq0$);
  \item destabilising (semi-destabilising) if $\mathfrak{F}^\b<0$ ($\mathfrak{F}^\b\leq0$);
  \item effective if $\limsup_{t\rightarrow\infty} \tilde E^\b(\rho(t))\cdot \frac{1}{t^2}=0.$
\end{itemize}
\end{defn}

\begin{thm}\label{lower bound of E}
Assume that $\chi$ is negative semi-definite.
The following inequality holds.
\begin{align}
\inf_{\om\in \Om}\sqrt{\tilde E^\b} \geq \sup_{\rho}( -\mathfrak{F}^\b ).
\end{align}
The supreme is taking over all $C^{1,1}$, effective, semi-destabilising geodesic $\rho$.
\end{thm}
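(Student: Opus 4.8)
The plan is to transplant Donaldson's lower bound for the Calabi functional to the pair $(\tilde{\mathfrak J}^\b,\tilde E^\b)$. Write $\Phi_\vphi:=c_\b-\tr_{\om_\vphi}\chi+\frac{\b}{V}\frac{\om^n}{\om^n_\vphi}$, so that $\tilde E^\b(\vphi)=\frac1V\int_M\Phi_\vphi^2\,\om^n_\vphi$ and the first variation reads $\delta\tilde{\mathfrak J}^\b(\vphi)(\psi)=\frac1V\int_M\psi\,\Phi_\vphi\,\om^n_\vphi$; thus $\Phi_\vphi$ is the gradient of $\tilde{\mathfrak J}^\b$ and $\sqrt{\tilde E^\b}$ is its $L^2(\om_\vphi)$–norm, while $\mathfrak F^\b(\rho)=\lim_t\frac{d}{dt}\tilde{\mathfrak J}^\b(\rho(t))$ by \eqref{euro}. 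Normalise every geodesic ray by arc length, $\|\dot\rho\|_{L^2(\om_{\rho(t)})}=1$. Since $\inf\geq\sup$ is equivalent to the pointwise statement, I would reduce the theorem to showing that for each $\vphi_0\in\cH_\Om$ and each effective, semi-destabilising unit-speed ray $\rho$ one has $\sqrt{\tilde E^\b(\vphi_0)}\geq-\mathfrak F^\b(\rho)$. Because $\chi$ is negative semi-definite, the second variation formula \eqref{2nd J functional} makes $\tilde{\mathfrak J}^\b$ convex along every $C^{1,1}$ geodesic; in particular $t\mapsto\tilde{\mathfrak J}^\b(\rho(t))$ is convex, its derivative is non-decreasing, and $\mathfrak F^\b(\rho)=\sup_t\frac{d}{dt}\tilde{\mathfrak J}^\b(\rho(t))$.

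The device is a family of connecting geodesics. For each $T>0$ let $\ell_T:[0,L_T]\to\cH_\Om$ be the arc-length parametrised $C^{1,1}$ geodesic joining $\om_{\vphi_0}$ to $\rho(T)$, with $L_T=d(\om_{\vphi_0},\rho(T))$ the Mabuchi distance; its existence and regularity come from Chen's theorem on $C^{1,1}$ geodesics. Convexity of $\tilde{\mathfrak J}^\b$ along $\ell_T$ gives the secant inequality $\frac{d}{ds}\big|_{s=0}\tilde{\mathfrak J}^\b(\ell_T(s))\leq\frac{\tilde{\mathfrak J}^\b(\rho(T))-\tilde{\mathfrak J}^\b(\vphi_0)}{L_T}$. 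At the smooth endpoint $s=0$ the first-variation formula together with Cauchy–Schwarz yields $\frac{d}{ds}\big|_{s=0}\tilde{\mathfrak J}^\b(\ell_T(s))=\frac1V\int_M\dot\ell_T(0)\,\Phi_{\vphi_0}\,\om^n_{\vphi_0}\geq-\|\dot\ell_T(0)\|_{L^2(\om_{\vphi_0})}\sqrt{\tilde E^\b(\vphi_0)}=-\sqrt{\tilde E^\b(\vphi_0)}$. Combining the two lines gives, for every $T$, the inequality $-\sqrt{\tilde E^\b(\vphi_0)}\leq\frac{\tilde{\mathfrak J}^\b(\rho(T))-\tilde{\mathfrak J}^\b(\vphi_0)}{L_T}$.

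It then remains to let $T\to\infty$. The triangle inequality gives $|L_T-T|\leq d(\om_{\vphi_0},\rho(0))$, so $L_T/T\to1$; convexity together with a Cesàro argument gives $\tilde{\mathfrak J}^\b(\rho(T))/T\to\mathfrak F^\b(\rho)$; and $\tilde{\mathfrak J}^\b(\vphi_0)/L_T\to0$. Hence the right-hand side converges to $\mathfrak F^\b(\rho)$, producing $-\sqrt{\tilde E^\b(\vphi_0)}\leq\mathfrak F^\b(\rho)$. Taking the infimum over $\vphi_0\in\cH_\Om$ and the supremum over admissible $\rho$ yields $\inf_{\om\in\Om}\sqrt{\tilde E^\b}\geq\sup_\rho(-\mathfrak F^\b)$, as claimed; the semi-destabilising hypothesis guarantees $-\mathfrak F^\b\geq0$ so that the bound is non-vacuous.

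The formal Cauchy–Schwarz/convexity skeleton is routine; the genuine difficulty is that everything takes place at $C^{1,1}$ regularity. I would expect the hard part to be twofold: first, justifying the first-variation identity and the secant form of convexity for $\tilde{\mathfrak J}^\b$ along the merely $C^{1,1}$ geodesics $\ell_T$ and $\rho$, where the second derivatives underlying \eqref{2nd J functional} exist only weakly and the integration by parts in the gradient formula must be controlled; and second, the limit $\tilde{\mathfrak J}^\b(\rho(T))/T\to\mathfrak F^\b(\rho)$, which demands that the asymptotic slope be exactly detected by the integral defining $\mathfrak F^\b$ and that no contribution escape at infinity. This is precisely where the effectiveness hypothesis $\limsup_t\tilde E^\b(\rho(t))/t^2=0$ is needed: it forces $\sqrt{\tilde E^\b(\rho(t))}=o(t)$, so the gradient of $\tilde{\mathfrak J}^\b$ grows sublinearly along $\rho$, keeping the error terms in the first-variation identity $o(T)$ and ensuring the secant slope converges to $\mathfrak F^\b(\rho)$ rather than to some larger accumulation value. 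Making these limiting arguments rigorous, rather than the inequality chain itself, is the crux.
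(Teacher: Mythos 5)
Your argument is correct in outline, but it is a genuinely different route from the paper's. The paper also connects $\vphi_0$ to $\rho(t)$ by $C^{1,1}$ geodesics $\gamma_t$, but it works at the \emph{far} endpoint: using the nonpositive curvature of $\cH_\Om$ (Calabi--Chen) and a Euclidean cosine comparison, it shows the unit tangent $\p\gamma_t/\p s$ at $\rho(t)$ aligns with $\dot\rho(t)$ up to an angle error $2(1-\cos\theta)\leq 2d_0^2/t^2$; Cauchy--Schwarz then gives $d\tilde{\mathfrak J}^\b(\p\gamma/\p s)_{\rho(t)}\leq \sqrt{\tilde E^\b(\rho(t))}\cdot\sqrt{2}\,d_0/t+d\tilde{\mathfrak J}^\b(\dot\rho)_{\rho(t)}$, and the effectiveness hypothesis is used \emph{precisely here} to kill the first term; finally convexity transports the derivative back to $s=0$ and Cauchy--Schwarz at $\vphi_0$ yields $-\sqrt{\tilde E^\b(\vphi_0)}\leq \mathfrak F^\b(\rho)$. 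You instead stay at the \emph{near} endpoint, replace the angle comparison by the secant-slope form of convexity, and identify $\lim_T\tilde{\mathfrak J}^\b(\rho(T))/L_T=\mathfrak F^\b(\rho)$ by a Ces\`aro argument using only the monotonicity of $(\tilde{\mathfrak J}^\b)'$ along $\rho$. This buys two things: you never invoke the Aleksandrov curvature comparison, and --- contrary to your own closing paragraph --- you never use effectiveness. The Ces\`aro limit of a monotone convergent derivative needs no growth control on $\tilde E^\b(\rho(t))$, so your final diagnosis of where effectiveness enters is misplaced: it is indispensable in the paper's tangent-vector comparison but appears nowhere in your chain. You should either locate a step that secretly requires it (I do not see one) or note explicitly that your argument proves the inequality with the supremum taken over all semi-destabilising rays, effective or not, which is formally stronger than the stated theorem. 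The regularity caveats you flag (first variation and convexity along merely $C^{1,1}$ geodesics, via smooth approximating geodesics) are real but are shared equally by the paper's proof.
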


We remark that when $\b=0$ and $\chi$ and $\om$ are both algebraic, the lower bound of $\tilde E^0$ was proved in Lejmi and Sz\'{e}kelyhidi \cite{Lejmi:2013qf} in algebraic setting.\\

We then prove the lower bound of $\tilde {\mathfrak J}^\b$ without the existence of $\tilde {\mathfrak J}^\b$-metric.
\begin{thm}\label{lowerboundEJ}
Suppose that $\chi$ is negative semi-definite.
Assume that $\tilde{\mathfrak J}^\b$ is bounded from below along a $C^{1,1}$ semi-destabilising geodesic ray and the infimum of the energy $\tilde E^\b$ is zero along this ray. Then $\tilde{\mathfrak J}^\b$ is uniformly bounded from below in the entire K\"ahler class $\Om$.
\end{thm}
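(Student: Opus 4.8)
The plan is to exploit the geodesic convexity of $\tilde{\mathfrak J}^\b$. For $\chi$ negative semi-definite the second variation \eqref{2nd J functional} is nonnegative, so $\tilde{\mathfrak J}^\b$ is convex along every $C^{1,1}$ geodesic; its first variation along a path $\vphi(s)$ is
\[
\frac{d}{ds}\tilde{\mathfrak J}^\b_{\om,\chi}(\vphi(s))=\frac1V\int_M \dot\vphi\,\Big(c_\b-\tr_{\om_\vphi}\chi+\frac{\b}{V}\frac{\om^n}{\om^n_\vphi}\Big)\om^n_\vphi .
\]
Writing $\|\dot\vphi\|_\vphi^2=\frac1V\int_M\dot\vphi^2\,\om^n_\vphi$, the Cauchy--Schwarz inequality gives $\big|\frac{d}{ds}\tilde{\mathfrak J}^\b\big|\le\|\dot\vphi\|_\vphi\,\sqrt{\tilde E^\b(\vphi)}$ by the very definition \eqref{E} of $\tilde E^\b$, and along a geodesic the speed $\|\dot\vphi\|_\vphi$ is constant. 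These two facts---convexity and the gradient bound by $\sqrt{\tilde E^\b}$---are the engine of the whole argument.

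First I would read off the behaviour along the given ray $\rho$. Set $f(t)=\tilde{\mathfrak J}^\b(\rho(t))$; it is convex, so $f'$ is non-decreasing and, by \eqref{euro}, $f'(t)\to\mathfrak F^\b(\rho)$. Semi-destabilising means $\mathfrak F^\b\le0$, hence $f'(t)\le\mathfrak F^\b\le0$ and $f$ is non-increasing; being bounded below it converges to some $f_\infty$, and if $\mathfrak F^\b<0$ the bound $f(t)\le f(0)+t\,\mathfrak F^\b$ would force $f\to-\infty$, so in fact $\mathfrak F^\b(\rho)=0$. Moreover $\inf_{\om\in\Om}\tilde E^\b\le\inf_t\tilde E^\b(\rho(t))=0$, so $\tilde E^\b$ has infimum zero over the entire class. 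To bound $\tilde{\mathfrak J}^\b$ at an arbitrary $\psi\in\cH_\Om$ I would join $\psi$ to $\rho(t)$ by the $C^{1,1}$ geodesic $\gamma_t$, of constant speed $d(\psi,\rho(t))$, and apply convexity at the $\rho(t)$-endpoint together with the gradient bound:
\[
\tilde{\mathfrak J}^\b(\psi)\ge \tilde{\mathfrak J}^\b(\rho(t))-d(\psi,\rho(t))\,\sqrt{\tilde E^\b(\rho(t))}\ge f_\infty-d(\psi,\rho(t))\,\sqrt{\tilde E^\b(\rho(t))}.
\]
If one can select times $t_k\to\infty$ along which $d(\psi,\rho(t_k))\sqrt{\tilde E^\b(\rho(t_k))}\to0$, then $\tilde{\mathfrak J}^\b(\psi)\ge f_\infty$ for every $\psi$, which is exactly the claimed uniform lower bound.

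The hard part will be controlling the error term $d(\psi,\rho(t))\sqrt{\tilde E^\b(\rho(t))}$, since $d(\psi,\rho(t))\le d(\psi,\rho(0))+t$ grows linearly while $\inf_t\tilde E^\b(\rho(t))=0$ only furnishes a subsequence on which $\tilde E^\b\to0$, with no rate. I see two ways to close this. The direct route is to use the effectiveness of $\rho$ quantitatively: combining the decay $\limsup_t \tilde E^\b(\rho(t))/t^2=0$ with the vanishing infimum to extract $t_k$ with $t_k\sqrt{\tilde E^\b(\rho(t_k))}\to0$, after which the displayed estimate finishes the proof. The alternative, which leans on \thmref{lower bound of E}, is by contradiction: if $\tilde{\mathfrak J}^\b$ were unbounded below then, since convexity and the gradient bound make it bounded below on every metric ball, a minimizing sequence $\psi_j$ must satisfy $d(\rho(0),\psi_j)\to\infty$; the unit-speed geodesics from $\rho(0)$ to $\psi_j$ subconverge, by the nonpositive curvature of the completed Mabuchi--Darvas space, to an effective semi-destabilising ray $\gamma$ with $\mathfrak F^\b(\gamma)<0$, contradicting $\mathfrak F^\b(\gamma)\ge-\inf_{\om\in\Om}\sqrt{\tilde E^\b}=0$ from \thmref{lower bound of E}. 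Verifying that the limiting ray is genuinely effective, and that a possibly sublinearly-minimizing sequence can be promoted to a strictly destabilising ($\mathfrak F^\b<0$) ray, is the delicate step where the hypotheses on $\rho$ are indispensable, and I expect it to be the crux of the argument.
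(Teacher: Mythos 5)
Your skeleton --- geodesic convexity of $\tilde{\mathfrak J}^\b$ for $\chi\le 0$ plus the Cauchy--Schwarz bound of the derivative by $\sqrt{\tilde E^\b}$ evaluated at the far endpoint $\rho(t)$ --- is exactly the paper's starting point, and your displayed inequality $\tilde{\mathfrak J}^\b(\psi)\ge\tilde{\mathfrak J}^\b(\rho(t))-d(\psi,\rho(t))\sqrt{\tilde E^\b(\rho(t))}$ is correct. But the step you flag as ``the hard part'' is a genuine gap, and neither of your closures works. The direct route needs times $t_k$ with $t_k\sqrt{\tilde E^\b(\rho(t_k))}\to0$; the hypotheses only give $\inf_t\tilde E^\b(\rho(t))=0$ (a subsequence with no rate), and effectiveness --- which is not even a hypothesis of this theorem --- only gives $\tilde E^\b(\rho(t))=o(t^2)$, which is useless here: the profile $\tilde E^\b(\rho(t))=t^{-1}$ satisfies both conditions yet $t\sqrt{\tilde E^\b(\rho(t))}=\sqrt t\to\infty$. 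Your alternative route through \thmref{lower bound of E} by contradiction requires extracting an effective, strictly destabilising limit ray from a minimizing sequence; you rightly call this unverified, and it is not the paper's argument.

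The missing idea is the angle comparison in the nonpositively curved space $\cH_\Om$, which eliminates the need for any decay rate. Before applying Cauchy--Schwarz, split the endpoint derivative as
$$d\tilde {\mathfrak J}^\b\Bigl(\frac{\p\gamma}{\p s}\Bigr)_{\rho(t)}=d\tilde {\mathfrak J}^\b\Bigl(\frac{\p\gamma}{\p s}-\frac{\p\rho}{\p t}\Bigr)_{\rho(t)}+d\tilde {\mathfrak J}^\b\Bigl(\frac{\p\rho}{\p t}\Bigr)_{\rho(t)}.$$
The second term is $\le0$ for every $t$, because the ray is semi-destabilising and $(\tilde{\mathfrak J}^\b)''\ge0$ along it, so its first derivative increases to $\mathfrak{F}^\b\le0$. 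For the first term, the Euclidean cosine comparison for the triangle with vertices $\psi$, $\rho(0)$, $\rho(t)$ gives $\|\frac{\p\gamma}{\p s}-\frac{\p\rho}{\p t}\|^2_{L^2(\om_{\rho(t)})}=2-2\cos\theta\le 2d_0^2/t^2$ with $d_0=d(\psi,\rho(0))$, which is \eqref{lowerbound tri}; Cauchy--Schwarz then yields $d\tilde{\mathfrak J}^\b(\frac{\p\gamma}{\p s})_{\rho(t)}\le\sqrt{\tilde E^\b(\rho(t))}\cdot\sqrt{2}\,d_0/t$, i.e.\ \eqref{key inequality}. Integrating this over the connecting segment of length $\tau(t)=O(t)$ gives $\tilde{\mathfrak J}^\b(\rho(t))-\tilde{\mathfrak J}^\b(\psi)\le C\,d_0\,\sqrt{\tilde E^\b(\rho(t))}$: the factor $t$ from the length is cancelled by the factor $1/t$ from the angle, and the right-hand side tends to $0$ along any subsequence on which $\tilde E^\b(\rho(t_k))\to0$. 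The uniform lower bound then follows from the assumed lower bound of $\tilde{\mathfrak J}^\b$ along $\rho$.
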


The tool we use here to obtain these lower bounds is based on Chen \cite{MR2471594}\cite{Chen:2008nr}. The proof relies on the existence of the geodesic rays and the nonpositive curvature property of the infinite dimensional space $\mathcal H_\Om$. In general, it is difficult to examine the lower bound of functionals in an infinite dimensional space, however, \thmref{lowerboundEJ} provides a method to examine it along only one geodesic ray. \\

Furthermore, we apply \thmref{lowerboundEJ} to the $K$-energy. When $C_1(M)<0$, according to Aubin-Yau's solution of the Calabi conjecture \cite{MR480350}\cite{MR1636569}, there exists a unique K\"ahler metric $\om_0$ such that $Ric(\om_0)$ represents the first Chern class. So let $$\chi=Ric(\om_0)$$ could be chosen to be $<0$. We obtain the following criterion of the $I$-properness of the $K$-energy.

\begin{thm}\label{proper}
Assume that there is a $C^{1,1}$ semi-destabilising geodesic ray $\rho(t)$ such that along $\rho(t)$
\begin{enumerate}
\item $\tilde{\mathfrak J}^\b$ is bounded from below,
\item the infimum of the energy $\tilde E^\b$ is zero.
\end{enumerate} Then the $K$-energy is $I$-proper.
\end{thm}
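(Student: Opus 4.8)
The plan is to deduce \thmref{proper} from the global lower bound \thmref{lowerboundEJ} together with the reduction \eqref{iproper}. First I would fix the geometry. Since $C_1(M)<0$, the Aubin--Yau theorem produces a K\"ahler metric $\om_0$ whose Ricci form represents $2\pi C_1(M)$ and may be taken strictly negative, so I set $\chi=Ric(\om_0)$, which is negative and in particular negative semi-definite; this makes \thmref{lowerboundEJ} applicable with this $\chi$. The functionals $\tilde{\mathfrak J}^\b$ and $\tilde E^\b$, and hence the notions of semi-destabilising and effective geodesic rays, are all to be read with respect to this fixed $\chi=Ric(\om_0)$. The two hypotheses of \thmref{proper} --- a $C^{1,1}$ semi-destabilising ray $\rho(t)$ along which $\tilde{\mathfrak J}^\b$ is bounded from below and $\inf\tilde E^\b=0$ --- are then verbatim the hypotheses of \thmref{lowerboundEJ}. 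Applying it yields that $\tilde{\mathfrak J}^\b_{\om,Ric(\om_0)}$ is uniformly bounded from below over the entire class $\Om$, i.e. $\inf_{\vphi\in\mathcal{H}_\Om}\tilde{\mathfrak J}^\b_{\om,Ric(\om_0)}(\vphi)>-\infty$.

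Next I would convert this into a lower bound for the $K$-energy via the decomposition \eqref{decomposek} and the estimates leading to \eqref{iproper}. The one mismatch is that \eqref{decomposek} involves $\tilde{\mathfrak J}^\b_{\om,Ric(\om)}$, whereas \thmref{lowerboundEJ} was applied with $\chi=Ric(\om_0)$. Since $\underline{S}\cdot D_\om(\vphi)$, $\b J_\om(\vphi)$ and the topological constant $c_\b$ depend only on $\Om$ and on $[\chi]=2\pi C_1(M)$, while $j_\om$ is linear in its form slot, the discrepancy is
\[
\tilde{\mathfrak J}^\b_{\om,Ric(\om)}(\vphi)-\tilde{\mathfrak J}^\b_{\om,Ric(\om_0)}(\vphi)=j_\om\bl Ric(\om)-Ric(\om_0),\vphi\br=j_\om\bl \i\p\bar\p h,\vphi\br,
\]
where $h=-\log(\om^n/\om_0^n)$ is a fixed smooth function. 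I would show this term is uniformly bounded on $\mathcal{H}_\Om$: integrating by parts to move $\i\p\bar\p$ off $h$ onto $\vphi$ (legitimate because each $\om^{n-1-i}\wedge(\i\p\bar\p\vphi)^i$ is closed) rewrites it, up to dimensional constants, as a sum of terms $\int_M h\,\om^{n-1-i}\wedge(\om_\vphi-\om)^{i+1}$. Expanding $(\om_\vphi-\om)^{i+1}$ and using that each monomial $\om^{a}\wedge\om_\vphi^{b}$ with $a+b=n$ is a positive measure of total mass $\Om^n$, one gets $|j_\om(\i\p\bar\p h,\vphi)|\le C(n,\Om)\,\|h\|_{C^0}$, a constant independent of $\vphi$. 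Hence $\inf_{\vphi}\tilde{\mathfrak J}^\b_{\om,Ric(\om)}(\vphi)>-\infty$ as well.

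Finally I would assemble the inequality. In \eqref{iproper} the coefficient of $I_\om$ is $\a-\frac{n\b}{n+1}$, and the admissible range $0\le\b<\frac{n+1}{n}\a$ forces $A:=\a-\frac{n\b}{n+1}>0$; setting $B:=C-\inf_{\vphi}\tilde{\mathfrak J}^\b_{\om,Ric(\om)}(\vphi)<\infty$ turns \eqref{iproper} into $\nu_\om(\vphi)\ge A\,I_\om(\vphi)-B$ for all $\vphi\in\mathcal{H}_\Om$, which is exactly the $I$-properness \eqref{Iproper}. The genuinely hard input --- passing from a single geodesic ray to a global lower bound of $\tilde{\mathfrak J}^\b$ --- is carried entirely by \thmref{lowerboundEJ}; within the present argument the only points demanding care are the strict positivity $A>0$ (guaranteed by the range of $\b$) and the uniform boundedness of the bridging term $j_\om(\i\p\bar\p h,\cdot)$, which ensures that replacing the reference Ricci form $Ric(\om)$ by the negative representative $Ric(\om_0)$ costs only a constant.
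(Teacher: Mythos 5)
Your proposal is correct and follows essentially the same route as the paper: \thmref{lowerboundEJ} supplies the uniform lower bound of $\tilde{\mathfrak J}^\b$ over $\cH_\Om$, which is then inserted into the decomposition \eqref{decomposek} together with Tian's $\a$-invariant inequality (\lemref{tianinequality}) and the $I$--$J$ equivalence to yield \eqref{iproper} with coefficient $\a-\frac{n\b}{n+1}>0$. The only addition is your explicit bounded-bridging estimate for $j_\om(\i\p\bar\p h,\cdot)$ reconciling $Ric(\om)$ with the negative representative $Ric(\om_0)$, a point the paper leaves implicit (in effect by taking the base point $\om=\om_0$); that step is correct and costs only a constant, so it does not change the argument.
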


When $\Om$ admits a $\tilde{\mathfrak J}^\b$-metric $\vphi$, the trivial geodesic ray $\rho(t)=\vphi, \forall t\geq 0$ provides such geodesic ray required in this theorem, since $\mathfrak{F}^\b=0$, the first condition follows from \thmref{c1 nega j low bdd} and the second one follows from \thmref{lower bound of E}.\\

One way to obtain the critical metric of $\mathfrak{J}$-functional is its negative gradient flow. It was introduced in Chen \cite{MR1772078} and also in Donaldson \cite{MR1701920} from moment map picture.
Theorem 1.1 in Song-Weinkove \cite{MR2368374} showed that under the following condition of a K\"ahler class $\Om$, that is, if there is a K\"ahler metric $\om\in \Om$ such that
$-\chi>0$ and
$(-c_0\cdot\om+(n-1)\chi)\wedge\om^{n-2}>0$,
the negative gradient flow of $\mathfrak{J}$-functional converges. Thus $I$-properness \eqref{Iproper} holds when $\chi=Ric(\om_0)\in C_1(M)<0$ and $(-c_0\cdot\om+(n-1)Ric(\om_0))\wedge\om^{n-2}>0$. We extend their theorem to the negative gradient flow of $\tilde{\mathfrak J}^\b$-functional
\begin{align}\label{jflow}
\frac{\p \vphi}{\p t}&=-c_\beta+\frac{n\chi\wedge\om_\vphi^{n-1}}{\om_\vphi^n}
 -\frac{\b}{V}\frac{\om^n}{\om^n_\vphi}.
\end{align} and prove its convergence in Proposition \ref{conjflow} under the condition,
\begin{align*}
-\chi>0 \text{ and }(-c_\b\cdot\om+(n-1)\chi)\wedge\om^{n-2}>0.
\end{align*}
The extra term involving $\b$ on the flow equation brings us trouble when we apply the second order estimate.
In order to overcome this problem, we calculate a differential inequality by using the linear elliptic operator $L$ defined in \eqref{Loperator} and apply the maximum principal.

We remark that \eqref{lma} and its flow have been generalised in different directions \cite{MR3091809}\cite{MR3219504}\cite{MR2794631}\cite{Li:2013yg}... which is far from a complete list.

Thus we verify the criterion in \thmref{proper}.

\begin{thm}\label{constgeodesicray}
Assume that there is a $\om\in\Om$ such that
\begin{align}\label{jflowcondition1}
(-c_\b\cdot\om+(n-1)Ric(\om_0))\wedge\om^{n-2}>0.
\end{align} Then from any K\"ahler potential $\vphi\in\cH_\Om$, there exists a $C^{1,1}$ semi-destabilising geodesic ray satisfying $(1)$ and $(2)$. Thus the $K$-energy is $I$-proper in $\Om$.
\end{thm}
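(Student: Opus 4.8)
The plan is to reduce the statement to the combination of two facts already in hand: the convergence of the $\tilde{\mathfrak J}^\b$-gradient flow (\propref{conjflow}), which manufactures a $\tilde{\mathfrak J}^\b$-metric in $\Om$, and the remark following \thmref{proper}, which says that once such a metric exists the \emph{constant} geodesic ray sitting at it satisfies hypotheses $(1)$ and $(2)$. First I would check that the hypotheses of \propref{conjflow} are met. Since we are in the case $C_1(M)<0$ with $\chi=Ric(\om_0)$, Aubin--Yau gives $Ric(\om_0)<0$, so $-\chi>0$ automatically; together with the standing assumption \eqref{jflowcondition1}, namely $(-c_\b\cdot\om+(n-1)Ric(\om_0))\wedge\om^{n-2}>0$, both pieces of the flow hypothesis $-\chi>0$ and $(-c_\b\cdot\om+(n-1)\chi)\wedge\om^{n-2}>0$ hold. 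Thus, starting from an arbitrary $\vphi\in\cH_\Om$, the negative gradient flow \eqref{jflow} converges to a limit potential $\psi$ which is a critical point of $\tilde{\mathfrak J}^\b$, i.e. a $\tilde{\mathfrak J}^\b$-metric solving \eqref{lma}.

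Next I would run the constant-ray argument explicitly. Take $\rho(t)=\psi$ for all $t\geq 0$; this is trivially a $C^{1,1}$ geodesic ray. Because $\p\rho/\p t\equiv 0$, the integrand in \eqref{euro} vanishes and $\mathfrak F^\b=0$, so $\rho$ is semi-destabilising. For condition $(1)$: a $\tilde{\mathfrak J}^\b$-metric now exists in $\Om$ and $\chi=Ric(\om_0)$ is negative semi-definite, so \thmref{c1 nega j low bdd} bounds $\tilde{\mathfrak J}^\b$ below by the common critical value, and $\tilde{\mathfrak J}^\b(\rho(t))$ is constantly equal to that value, hence bounded below along $\rho$. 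For condition $(2)$: dividing the critical equation \eqref{lma} through by $\om^n_\psi$ gives $\tr_{\om_\psi}\chi=c_\b+\frac{\b}{V}\frac{\om^n}{\om^n_\psi}$, so the factor $c_\b-\tr_{\om_\psi}\chi+\frac{\b}{V}\frac{\om^n}{\om^n_\psi}$ appearing in \eqref{E} vanishes identically; therefore $\tilde E^\b(\rho(t))\equiv 0$ and its infimum along $\rho$ is zero (equivalently, this is \thmref{lower bound of E} evaluated at the critical metric). Having produced a $C^{1,1}$ semi-destabilising ray satisfying $(1)$ and $(2)$, I would invoke \thmref{proper} to conclude that the $K$-energy is $I$-proper in $\Om$.

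The substantive obstacle is entirely packaged inside \propref{conjflow}, the convergence of the flow \eqref{jflow}; the remaining steps above are formal once that metric is available. The difficulty there, flagged in the discussion preceding the theorem, is that the extra term $\frac{\b}{V}\frac{\om^n}{\om^n_\vphi}$ obstructs the standard second-order (Laplacian) estimate of Song--Weinkove for the $\mathfrak J$-flow. My expectation is that this is handled exactly as indicated: derive a differential inequality for the relevant trace quantity through the linear elliptic operator $L$ of \eqref{Loperator} and apply the maximum principle to bound $\tr_\om\om_\vphi$ uniformly, after which the $C^0$, $C^2$, and higher-order estimates close up and yield long-time existence and smooth convergence to the $\tilde{\mathfrak J}^\b$-metric $\psi$. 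I would treat \propref{conjflow} as the crux and the present theorem as its corollary via the constant ray.
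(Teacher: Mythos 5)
Your reduction to \propref{conjflow} is the same as the paper's: both treat the convergence of the $\tilde{\mathfrak J}^\b$-flow as the substantive content and the rest as formal consequences, and your verification of the flow hypotheses ($-\chi=-Ric(\om_0)>0$ by Aubin--Yau together with \eqref{jflowcondition1}) is what the paper intends. Where you diverge is in how the geodesic ray is extracted, and the shortcut you take does not cover the whole statement. You use the constant ray sitting at the limit $\tilde{\mathfrak J}^\b$-metric $\psi$; that ray does satisfy $(1)$, $(2)$ and $\mathfrak F^\b=0$ (this is precisely the remark the paper makes after \thmref{proper}), and it does give $I$-properness via \thmref{proper}. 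But the theorem also asserts that from \emph{any} prescribed potential $\vphi\in\cH_\Om$ there exists such a ray, which the paper's own proof reads as a ray \emph{starting at} the prescribed point: it connects the arbitrary potential to the flow trajectory $\vphi(t)$ by $C^{1,1}$ geodesic segments $\vphi_t(s)$ (via \thmref{geo seg}), passes to the limit $\rho(s)=\lim_{t\to\infty}\vphi_t(s)$ to obtain a ray emanating from that point, inherits $(1)$ and $(2)$ because the far endpoints of the segments lie on the flow, and checks $\mathfrak F^\b(\rho)\le 0$ using the comparison inequality \eqref{lowerbound tri} together with $\tilde E^\b(\vphi(t))\to 0$. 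Your constant ray starts at $\psi$, not at $\vphi$, so the first assertion of the theorem is not established by your argument.

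If one only cares about the final conclusion ($I$-properness), your route is complete and arguably cleaner, since it bypasses the limit-of-segments construction entirely; the verification of $(1)$ via \thmref{c1 nega j low bdd} and of $(2)$ via $\tilde H\equiv 0$ at a critical metric is correct. To prove the theorem as stated, though, you would need to add the paper's construction of a ray based at the given $\vphi$ (or some substitute for it).
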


Paralleling to Donaldson's conjecture of existence of the cscK metrics (Conjecture/Question 12 in \cite{MR1736211}), we propose a notion called geodesic stability w.r.t to the $\tilde{\mathfrak{J}}^\b$-functional (see Definition \ref{geosta}).
We at last link the existence of $\tilde{\mathfrak{J}}^\b$-metric to this geodesic stability.
\begin{thm}\label{geodesicstability}Suppose that $\chi$ is negative semi-definite.
Assume that $\Om$ contains a $\tilde{\mathfrak{J}}^\b$-metric $\vphi$, then $\Om$ is \emph{geodesic semi-stable} at $\vphi$ and moreover, it is weak geodesic semi-stable.
\end{thm}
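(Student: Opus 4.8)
The plan is to use that a $\tilde{\mathfrak J}^\b$-metric is an absolute minimiser of the convex functional $\tilde{\mathfrak J}^\b$ and then to read off the sign of the asymptotic slope $\mathfrak F^\b$ along every geodesic ray issuing from it. The starting observation is that at a $\tilde{\mathfrak J}^\b$-metric $\vphi$ the defining equation \eqref{lma}, divided by $\om^n_\vphi$ and rewritten as $\tr_{\om_\vphi}\chi=c_\b+\frac{\b}{V}\frac{\om^n}{\om^n_\vphi}$, says exactly that the Euler--Lagrange quantity
\[
c_\b-\tr_{\om_\vphi}\chi+\frac{\b}{V}\frac{\om^n}{\om^n_\vphi}
\]
vanishes identically. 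In particular $\tilde E^\b(\vphi)=0$, and, by the first variation of $\tilde{\mathfrak J}^\b$, its differential at $\vphi$ is zero in every direction.

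Next I would fix an arbitrary $C^{1,1}$ geodesic ray $\rho(t)$ with $\rho(0)=\vphi$ and set $h(t)=\tilde{\mathfrak J}^\b(\rho(t))$. Since $\chi$ is negative semi-definite, the second variation formula \eqref{2nd J functional} makes $h$ convex, so its (one-sided) derivative $h'(t)$ is non-decreasing in $t$. The previous paragraph gives
\[
h'(0)=\frac{1}{V}\int_M\frac{\p\rho}{\p t}\Big|_{t=0}\Big(c_\b-\tr_{\om_\vphi}\chi+\frac{\b}{V}\frac{\om^n}{\om^n_\vphi}\Big)\om_\vphi^n=0,
\]
because the bracket is the Euler--Lagrange quantity evaluated at $\rho(0)=\vphi$. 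Monotonicity of $h'$ then forces $h'(t)\ge 0$ for all $t\ge 0$, and since the limit in \eqref{euro} exists by the convexity recorded before that display, we obtain $\mathfrak F^\b(\rho)=\lim_{t\to\infty}h'(t)\ge h'(0)=0$. Thus every geodesic ray from $\vphi$ is semi-stable; this is precisely the assertion that $\Om$ is geodesic semi-stable at $\vphi$, and the lower bound of \thmref{c1 nega j low bdd} is the companion fact that $\vphi$ realises the minimum of $h$. Running the identical argument over the class of weak geodesic rays then yields weak geodesic semi-stability.

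The step I expect to be the main obstacle is justifying the convexity of $h$ --- equivalently the second variation inequality \eqref{2nd J functional} --- along a merely $C^{1,1}$ (weak) geodesic ray, where second derivatives in $t$ need not exist classically. I would handle this by approximating $\rho$ with Chen's smooth $\epsilon$-geodesics, establishing convexity and $h'(0)=0$ for these regularised curves where the second variation is literal, and then passing to the limit using the uniform $C^{1,1}$ estimates, following Chen \cite{MR2471594}\cite{Chen:2008nr}. Once convexity survives in the limit, the monotonicity of $h'$ together with the vanishing $h'(0)=0$ delivers $\mathfrak F^\b(\rho)\ge 0$ verbatim, which is the (weak) geodesic semi-stability of $\Om$ at $\vphi$.
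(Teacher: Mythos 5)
Your argument for the first assertion is correct and is essentially the paper's: the paper invokes \thmref{c1 nega j low bdd} to say $\vphi$ is a global minimiser and hence $\tilde{\mathfrak J}^\b$ is non-decreasing along any ray from $\vphi$, while you get $h'(0)=0$ directly from the Euler--Lagrange equation \eqref{lma} and then use monotonicity of $h'$ from convexity; these are interchangeable, and your regularisation remark about $C^{1,1}$ geodesics matches the paper's (tacit) treatment.

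The gap is in the last sentence. \emph{Weak geodesic semi-stability} in Definition \ref{geosta} quantifies over every non-trivial geodesic ray with uniform $C^{1,1}$ bound, with \emph{no restriction on the base point}. Your ``identical argument'' hinges on $h'(0)=0$, which you obtained precisely because the ray starts at the critical metric $\vphi$; for a ray based at an arbitrary potential the initial slope can be strictly negative, and convexity alone then gives no sign on $\mathfrak F^\b=\lim_{t\to\infty}h'(t)$. So the weak statement does not follow by rerunning your argument. The paper handles this case differently: it argues by contradiction, noting that if $\mathfrak F^\b<0$ then $h'(t)<0$ for all $t$ (monotonicity), so $\tilde{\mathfrak J}^\b$ is non-increasing along the ray, and Proposition \ref{c1 nega geo conv} (which uses the uniform $C^{1,1}$ bound to connect the ray to $\vphi$ and pass to a limit geodesic) forces the ray to converge to a $\tilde{\mathfrak J}^\b$-metric, whence $\mathfrak F^\b=0$, a contradiction. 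A cleaner repair within your framework is available: by \thmref{c1 nega j low bdd} the functional $\tilde{\mathfrak J}^\b$ is bounded below on all of $\cH_\Om$, and a convex function on $[0,\infty)$ that is bounded below must satisfy $\lim_{t\to\infty}h'(t)\ge 0$ (otherwise $h(t)\le h(0)+t\,\mathfrak F^\b\to-\infty$). Either route closes the gap, but as written your proof only establishes geodesic semi-stability at $\vphi$, not the weak statement.
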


The criterion \eqref{criterion} means that along the geodesic ray, the first derivative of the $\tilde{\mathfrak{J}}^\b$-functional is strictly increase.
The question \ref{conj} suggests that there is no such geodesic ray satisfying \eqref{criterion} implies the existence of $\tilde{\mathfrak{J}}^\b$-metric. Then from \thmref{c1 nega j low bdd} and \eqref{iproper}, the $K$-energy is $I$-proper. So according to Tian's conjecture (Conjecture 7.12 in \cite{MR1787650}), there exists cscK metrics. In this sense, the question \ref{conj} probably provides another possible point of view of Donaldson's conjecture (Conjecture/Question 12 in \cite{MR1736211}).

We further remark that with these theorems, it would be more interesting to find the examples of K\"ahler class where the $\tilde{\mathfrak{J}}^\b$-functional has lower bound but the $\tilde{\mathfrak{J}}^\b$-metric does not exist.

%%%%%%%%%%%%%%%%%%%%%%%%%%%%%%%%%%%%%%%

%%%%%%%%%%%%%%%%%%%%%%%%%%%%%%%%%%%%%%%%%%%%%%%%%%%%%%%%%%%%%%%%%%%%%%%%%%%%%%%%%%%%%%%%%%%%%%%%%%%%%%%%%%%%%%%%%%%%%%%%%%%%%%%%
%%%%%%%%%%%%%%%%%%%%%%%%%%%%%%%%%%%%%%%%%%%%%%%%%%%%%%%%%%%%%%%%%%%%%%%%%%%%%%%%%%%%%%%%%%%%%%%%%%%%%%%%%%%%%%%%%%%%%%%%%%%%%%%%
%\section{Lower bound of $\tilde {\mathfrak J}$ and $\tilde E$ in the space of K\"ahler potentials}

%%%%%%%%%%%%%%%%%%%%%%%%%

%The corresponding $j$-flow (see Remark~\ref{jfunctional}) was studied in many papers \cite{MR2104078} \cite{MR2368374} \cite{Fang:2012ty} \cite{MR2794631}  \cite{Song:2013qv} \cite{Lejmi:2013qf} and references therein.

%\cite{Fang:2012mz}

%%%%Song-Weinkove \cite{} found a sufficient and necessary analytic condition to solve this equation by using the $J$-flow. Fang and his coauthors \cite{} studied the singularity behavior on the border situation of  this analytic condition. Later, Lejmi and Sz\'ekelyhidi \cite{} discovered an algebraic condition and verify it is true on K\"ahler surface.
%%%%%%%%%%%%%%%%%%%%%%%%%%
\section{Variational structure of $\tilde{ \mathfrak J}$ and $\tilde E$}\label{Lagrangian functionals}

Recall our definition for any $\vphi\in \cH_\Om$,
\begin{align*}
\tilde {\mathfrak J}^\b_{\om,\chi}(\vphi)=\mathfrak J_{\om,\chi}(\vphi)+\b \cdot J_\om(\vphi).
\end{align*}
Let $\vphi(t)$ be a smooth family of K\"ahler potentials with $\vphi(0)=\vphi$. We denote $$\delta=\frac{d}{dt}\vert_{t=0}\text{ and } \dot\vphi=\delta\vphi(t).$$
\begin{lem}
The 1st variation of $\tilde{\mathfrak J}$-functional is
\begin{align*}
\delta\tilde {\mathfrak J}^\b(\dot\vphi)=\frac{1}{V}\int_{M}\dot\vphi[c_\b \cdot \om_\vphi^n-n\cdot
\chi\wedge\om^{n-1}_{\vphi}+ \frac{\b}{V}\om^n].
\end{align*}
\end{lem}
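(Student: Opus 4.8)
The plan is to exploit the linearity of $\delta$ together with the splitting
$\tilde{\mathfrak J}^\b_{\om,\chi}=\underline{S}\cdot D_\om+j_\om(\chi,\cdot)+\b\,J_\om$, which reduces the statement to computing the three variations $\delta D_\om$, $\delta j_\om(\chi,\cdot)$ and $\delta J_\om$ separately and recombining. The only input needed throughout is $\delta\om_\vphi=\i\p\bar\p\dot\vphi$ and the fact that $\om$ and $\chi$ are closed, which is what makes the integrations by parts below boundary-free (since $M$ is compact and without boundary). Concretely, I aim to establish $\delta D_\om(\dot\vphi)=\frac1V\int_M\dot\vphi\,\om_\vphi^n$, then $\delta j_\om(\chi,\dot\vphi)=-\frac nV\int_M\dot\vphi\,\chi\wedge\om_\vphi^{n-1}$, and finally $\delta J_\om(\dot\vphi)=\frac1V\int_M\dot\vphi(\om^n-\om_\vphi^n)$; granting these, the asserted formula is pure recombination.

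I expect the computation of $\delta j_\om(\chi,\cdot)$ to be the crux, because in each summand $\int_M\vphi\,\chi\wedge\om^{n-1-i}\wedge(\i\p\bar\p\vphi)^{i}$ the potential $\vphi$ enters both through the explicit factor and through the $i$ powers of $\i\p\bar\p\vphi$. Differentiating the $i$-th summand yields one term $\int_M\dot\vphi\,\chi\wedge\om^{n-1-i}\wedge(\i\p\bar\p\vphi)^{i}$ from the explicit $\vphi$, together with $i$ equal terms of the shape $\int_M\vphi\,\chi\wedge\om^{n-1-i}\wedge(\i\p\bar\p\dot\vphi)\wedge(\i\p\bar\p\vphi)^{i-1}$ from the power. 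In each of the latter I would integrate by parts twice to transfer $\i\p\bar\p$ off $\dot\vphi$ and back onto $\vphi$; since $\chi$, $\om$ and $(\i\p\bar\p\vphi)^{i-1}$ are all closed, this is legitimate and converts every such term into $\int_M\dot\vphi\,\chi\wedge\om^{n-1-i}\wedge(\i\p\bar\p\vphi)^{i}$. The two contributions then combine with total weight $(i+1)$, and the identity $\frac{n!}{(i+1)!(n-i-1)!}(i+1)=n\binom{n-1}{i}$ together with the binomial expansion $\sum_{i=0}^{n-1}\binom{n-1}{i}\om^{n-1-i}\wedge(\i\p\bar\p\vphi)^{i}=\om_\vphi^{n-1}$ collapses the sum to $-\frac nV\int_M\dot\vphi\,\chi\wedge\om_\vphi^{n-1}$. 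The delicate part is the combinatorial bookkeeping and verifying that each integration by parts is genuinely boundary-free at every stage.

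For $\delta J_\om$ I would apply the same integration-by-parts scheme to the explicit $\p\vphi\wedge\bar\p\vphi$ formula (this is the classical Aubin computation), obtaining $\delta J_\om(\dot\vphi)=\frac1V\int_M\dot\vphi(\om^n-\om_\vphi^n)$; then $\delta D_\om(\dot\vphi)=\frac1V\int_M\dot\vphi\,\om^n-\delta J_\om(\dot\vphi)=\frac1V\int_M\dot\vphi\,\om_\vphi^n$ is immediate from $D_\om=\frac1V\int_M\vphi\,\om^n-J_\om$. Assembling the three pieces yields
\begin{align*}
\delta\tilde{\mathfrak J}^\b(\dot\vphi)=\frac1V\int_M\dot\vphi\Big[\underline{S}\,\om_\vphi^n-n\,\chi\wedge\om_\vphi^{n-1}+\b(\om^n-\om_\vphi^n)\Big].
\end{align*}
Grouping the two $\om_\vphi^n$ terms and invoking the definition of the topological constant $c_\b=n\frac{[\chi]\cdot\Om^{n-1}}{\Om^{n}}-\frac{\b}{V}$ (equivalently, that $\underline{S}=n\frac{[\chi]\cdot\Om^{n-1}}{\Om^{n}}$ is the relevant average) identifies the bracket with $c_\b\,\om_\vphi^n-n\,\chi\wedge\om_\vphi^{n-1}+\frac{\b}{V}\om^n$, which is precisely the claimed first variation.
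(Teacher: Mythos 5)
Your proposal is correct and follows essentially the same route as the paper, whose proof simply adds the first variations of $\mathfrak J_{\om,\chi}$ and of $J_\om$ without deriving them; you additionally carry out those derivations from the explicit formulas (the double integration by parts on $j_\om$, the weight $(i+1)$, and the binomial collapse to $\om_\vphi^{n-1}$ are all handled correctly). One caveat on the final recombination: since $\b\,\delta J_\om(\dot\vphi)=\frac1V\int_M\dot\vphi\,\b\,(\om^n-\om_\vphi^n)$, the bracket you actually obtain is $(\underline{S}-\b)\,\om_\vphi^n-n\,\chi\wedge\om_\vphi^{n-1}+\b\,\om^n$, which agrees with the stated $c_\b\,\om_\vphi^n-n\,\chi\wedge\om_\vphi^{n-1}+\frac{\b}{V}\om^n$ (where $c_\b=\underline{S}-\frac{\b}{V}$) only up to a factor of $V$ in the $\b$-terms --- but the paper's own proof makes the identical silent substitution of $\b$ by $\frac{\b}{V}$ between its two displayed lines, so this is a normalization inconsistency of the source rather than a gap in your argument.
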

\begin{proof}
We compute
\begin{align*}
\delta\tilde {\mathfrak J}^\b(\dot\vphi)
&=\frac{1}{V}\int_{M}\dot\vphi(c_0 \cdot \om_\vphi^n-n\cdot
\chi\wedge\om^{n-1}_{\vphi})+ \frac{\b}{V}\int_{M} \dot\vphi(\om^n-\om^{n}_{\vphi})\\
&=\frac{1}{V}\int_{M}\dot\vphi[(c_0-\frac{\b}{V}) \cdot \om_\vphi^n-n\cdot
\chi\wedge\om^{n-1}_{\vphi}+ \frac{\b}{V}\om^n].
\end{align*}
\end{proof}
\begin{lem}
The 2nd variation of $\tilde{\mathfrak J}$-functional is
\begin{align}\label{2nd J functional}
\delta^2\tilde {\mathfrak J}^\b(\dot\vphi,\dot\vphi)=\frac{1}{V}\int_{M}(\ddot\vphi-\vert\p\dot\vphi\vert^2)(c_\b-\tr _{\om_\vphi} \chi)\om_\vphi^n
-\frac{1}{V}\int_M \chi_{i\bar j}\dot\vphi^i\dot\vphi^{\bar j}\om_\vphi^n.
\end{align}
\end{lem}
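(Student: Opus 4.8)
The plan is to differentiate the first–variation identity of the preceding lemma once more along the path $\vphi(t)$, since $\delta^2\tilde{\mathfrak J}^\b(\dot\vphi,\dot\vphi)=\frac{d}{dt}\big|_{t=0}\delta\tilde{\mathfrak J}^\b(\dot\vphi)$ evaluated along the path. The only analytic inputs are the two standard deformation formulas
\begin{align*}
\frac{d}{dt}\om_\vphi^n=n\,\i\p\bar\p\dot\vphi\wedge\om_\vphi^{n-1}=(\Delta_{\om_\vphi}\dot\vphi)\,\om_\vphi^n,\qquad\frac{d}{dt}\om_\vphi^{n-1}=(n-1)\,\i\p\bar\p\dot\vphi\wedge\om_\vphi^{n-2},
\end{align*}
together with the facts that $\om^n$ and the topological constant $c_\b$ do not depend on $t$.

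Applying $\frac{d}{dt}$ to $V\delta\tilde{\mathfrak J}^\b=\int_M\dot\vphi(c_\b\om_\vphi^n-n\chi\wedge\om_\vphi^{n-1}+\frac{\b}{V}\om^n)$ splits the result into two groups. Differentiating the outer factor $\dot\vphi\mapsto\ddot\vphi$ produces $\int_M\ddot\vphi(c_\b-\tr_{\om_\vphi}\chi)\om_\vphi^n$, where I have used $n\chi\wedge\om_\vphi^{n-1}=(\tr_{\om_\vphi}\chi)\om_\vphi^n$; the fixed form $\om^n$ contributes here only through its coefficient. Differentiating the measures produces
\begin{align*}
c_\b\int_M\dot\vphi\,(\Delta_{\om_\vphi}\dot\vphi)\,\om_\vphi^n-n(n-1)\int_M\dot\vphi\,\chi\wedge\i\p\bar\p\dot\vphi\wedge\om_\vphi^{n-2}.
\end{align*}
Both $\om_\vphi^{n-1}$ and $\chi\wedge\om_\vphi^{n-2}$ are closed — the latter because $d\chi=0$ — so I would integrate by parts, transferring each $\i\p\bar\p\dot\vphi$ onto the remaining $\dot\vphi$ with the usual sign. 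This turns the two integrals into $-c_\b\int_M|\p\dot\vphi|^2\om_\vphi^n$ and $+n(n-1)\int_M\i\p\dot\vphi\wedge\bar\p\dot\vphi\wedge\chi\wedge\om_\vphi^{n-2}$.

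The crux is the pointwise algebraic identity
\begin{align*}
n(n-1)\,\i\p\dot\vphi\wedge\bar\p\dot\vphi\wedge\chi\wedge\om_\vphi^{n-2}=\big[(\tr_{\om_\vphi}\chi)\,|\p\dot\vphi|^2-\chi_{i\bar j}\dot\vphi^i\dot\vphi^{\bar j}\big]\om_\vphi^n,
\end{align*}
which I would verify at an arbitrary point by choosing $\om_\vphi$–orthonormal holomorphic coordinates in which $\chi_{i\bar j}$ is diagonal; only the diagonal pairings survive the wedge products, and the surviving double sum collapses to the indicated difference. Substituting this back, the term $(\tr_{\om_\vphi}\chi)|\p\dot\vphi|^2$ combines with $-c_\b\int_M|\p\dot\vphi|^2\om_\vphi^n$ into $-\int_M|\p\dot\vphi|^2(c_\b-\tr_{\om_\vphi}\chi)\om_\vphi^n$, which merges with the $\ddot\vphi$–term to give the geodesic combination $(\ddot\vphi-|\p\dot\vphi|^2)(c_\b-\tr_{\om_\vphi}\chi)$, while the remaining $-\chi_{i\bar j}\dot\vphi^i\dot\vphi^{\bar j}$ supplies the last integral. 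Collecting everything and dividing by $V$ gives \eqref{2nd J functional}.

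I expect the one genuinely delicate point to be this mixed wedge identity and its integration by parts: fixing the combinatorial constant $n(n-1)$ and verifying that the contraction of $\chi$ against $\p\dot\vphi\otimes\bar\p\dot\vphi$ appears with the correct sign requires both the closedness of $\chi$ (so that $\chi\wedge\om_\vphi^{n-2}$ is closed and no boundary term survives) and a careful frame computation. The two deformation formulas and the single integration by parts yielding $|\p\dot\vphi|^2$ are routine, so the only real bookkeeping is ensuring that the $c_\b$ and $\tr_{\om_\vphi}\chi$ contributions recombine exactly into the operator $\ddot\vphi-|\p\dot\vphi|^2$ that signals convexity along geodesics.
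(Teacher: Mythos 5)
Your derivation is correct and lands on \eqref{2nd J functional}, but it takes a different computational route from the paper's. Both proofs begin the same way, by differentiating the first--variation identity once more along the path, and both must then integrate by parts; the difference is where the closedness of $\chi$ does its work. The paper stays in index notation throughout: it uses $\frac{d}{dt}g_\vphi^{i\bar j}=-\dot\vphi^{i\bar j}$ and $\frac{d}{dt}\om_\vphi^n=\tri_\vphi\dot\vphi\,\om_\vphi^n$, integrates by parts separately on the terms $\int\dot\vphi\,\dot\vphi^{i\bar j}\chi_{i\bar j}\om_\vphi^n$ and $\int\dot\vphi\,(c_\b-\tr_{\om_\vphi}\chi)\tri_\vphi\dot\vphi\,\om_\vphi^n$, and then relies on $d\chi=0$ (in the form $(\chi_{i\bar j})^{\bar j}=(\chi_{j\bar j})_i$) to cancel the two leftover integrals containing first derivatives of $\chi$. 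You instead keep everything in wedge form, so that closedness of $\chi$ is used exactly once, via Stokes applied to the closed form $\chi\wedge\om_\vphi^{n-2}$, and no derivative of $\chi$ ever appears; the whole burden is shifted onto your pointwise identity $n(n-1)\,\i\p\dot\vphi\wedge\bar\p\dot\vphi\wedge\chi\wedge\om_\vphi^{n-2}=\bigl[(\tr_{\om_\vphi}\chi)\vert\p\dot\vphi\vert^2-\chi_{i\bar j}\dot\vphi^i\dot\vphi^{\bar j}\bigr]\om_\vphi^n$, which is indeed correct with the constant $n(n-1)$ and the indicated sign (diagonalize $\chi$ against $\om_\vphi$ at a point; only the $i=j$ pairings of $\i\p\dot\vphi\wedge\bar\p\dot\vphi$ survive, and the complementary sum over eigenvalues gives $\tr_{\om_\vphi}\chi-\mu_i$). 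Your version trades the paper's easy-to-botch cancellation of $\nabla\chi$ terms for a one-time combinatorial identity; both are legitimate. One caveat applies equally to both arguments: the first variation contains the term $\frac{\b}{V^2}\int_M\dot\vphi\,\om^n$, whose $t$-derivative $\frac{\b}{V^2}\int_M\ddot\vphi\,\om^n$ appears in neither your final expression nor the paper's --- your remark that the fixed form $\om^n$ ``contributes here only through its coefficient'' glosses over it exactly as the paper's computation does --- so you have reproduced the formula \eqref{2nd J functional} precisely as stated, including this omission.
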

\begin{proof}
We compute directly,
\begin{align*}
\delta^2\tilde {\mathfrak J}^\b(\dot\vphi,\dot\vphi)
&=\frac{d}{dt}\frac{1}{V}\int_{M}\dot\vphi[(c_\b -g_\vphi^{i\bar j}\chi_{i\bar j}) \om_\vphi^n +\frac{\b}{V}\om^n]\\
&=\frac{1}{V}\int_{M}\ddot\vphi(c_\b -g_\vphi^{i\bar j}\chi_{i\bar j}) \om_\vphi^n
+\frac{1}{V}\int_{M}\dot\vphi\dot\vphi^{i\bar j}\chi_{i\bar j} \om_\vphi^n\\
&+\frac{1}{V}\int_{M}\dot\vphi(c_\b-g_\vphi^{i\bar j}\chi_{i\bar j})\tri_\vphi\dot\vphi \om_\vphi^n.
\end{align*}
The second term becomes
\begin{align*}
&\frac{1}{V}\int_{M}\dot\vphi\dot\vphi^{i\bar j}\chi_{i\bar j} \om_\vphi^n\\
&=-\frac{1}{V}\int_{M}\dot\vphi^{\bar j}\dot\vphi^{i}\chi_{i\bar j} \om_\vphi^n
-\frac{1}{V}\int_{M}\dot\vphi\dot\vphi^{i}(\chi_{i\bar j} )^{\bar j}\om_\vphi^n\\
&=-\frac{1}{V}\int_{M}\dot\vphi^{\bar j}\dot\vphi^{i}\chi_{i\bar j} \om_\vphi^n
-\frac{1}{V}\int_{M}\dot\vphi\dot\vphi^{i}(\chi_{j\bar j} )_i\om_\vphi^n.
\end{align*}
The third term is
\begin{align*}
&\frac{1}{V}\int_{M}\dot\vphi(c_\b-g_\vphi^{i\bar j}\chi_{i\bar j})\tri_\vphi\dot\vphi \om_\vphi^n\\
&=-\frac{1}{V}\int_{M}(c_\b-g_\vphi^{i\bar j}\chi_{i\bar j})\vert\p\dot\vphi\vert^2 \om_\vphi^n
+\frac{1}{V}\int_{M}\dot\vphi(g_\vphi^{i\bar j}\chi_{i\bar j})_{\bar l}\dot\vphi_k g_\vphi^{k\bar l} \om_\vphi^n.
\end{align*}
Then the lemma follows from adding them together.
\end{proof}

Therefore, when $\chi$ is strictly negative (positive), the $\tilde {\mathfrak J}^\b$-metric is local minimum (maximum).

\begin{prop}\label{uniqueness}
When $\chi$ is strictly negative or strictly positive, the $\tilde {\mathfrak J}^\b$-metric is unique up to a constant.
\end{prop}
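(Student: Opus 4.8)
The plan is to deduce uniqueness from the convexity (resp.\ concavity) of $\tilde{\mathfrak J}^\b$ along geodesics in $\cH_\Om$ that is encoded in the second variation formula \eqref{2nd J functional}. The first point to record is that a $\tilde{\mathfrak J}^\b$-metric $\om_\vphi$ is, by definition, a critical point: the bracket $c_\b\om_\vphi^n - n\chi\wedge\om_\vphi^{n-1} + \frac{\b}{V}\om^n$ occurring in the first variation vanishes identically as a top form, so $\delta\tilde{\mathfrak J}^\b(\dot\vphi)=0$ for \emph{every} variation $\dot\vphi$, not only for those tangent to a prescribed path.

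Next I would take two $\tilde{\mathfrak J}^\b$-metrics $\vphi_0,\vphi_1$ and join them by a $C^{1,1}$ geodesic $\vphi(t)$, $t\in[0,1]$, whose existence is supplied by Chen's theorem. Setting $f(t)=\tilde{\mathfrak J}^\b(\vphi(t))$, the geodesic relation $\ddot\vphi-|\p\dot\vphi|^2=0$ kills the first term of \eqref{2nd J functional}, leaving $f''(t)=-\frac{1}{V}\int_M \chi_{i\bar j}\dot\vphi^i\dot\vphi^{\bar j}\,\om_\vphi^n$. If $\chi$ is strictly negative this is $\geq 0$, with equality exactly when $\p\dot\vphi\equiv 0$, so $f$ is convex and strictly convex in every direction with $\p\dot\vphi\not\equiv 0$ (for $\chi$ strictly positive the same computation makes $f$ concave and the argument is symmetric). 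Since both endpoints are critical, $f'(0)=f'(1)=0$ by the first step; as $f'$ is monotone it must vanish on all of $[0,1]$, hence $f''\equiv 0$. The strict definiteness of $\chi$ then forces $\p\dot\vphi\equiv 0$ for all $t$, so $\dot\vphi$ is constant in space; reinserting this into the geodesic equation yields $\ddot\vphi=|\p\dot\vphi|^2=0$, and therefore $\vphi(t)=\vphi_0+t(\vphi_1-\vphi_0)$ with $\vphi_1-\vphi_0$ a constant. This is precisely uniqueness up to an additive constant.

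The hard part will be that Chen's geodesics are only $C^{1,1}$, so neither $f''$ nor the pointwise vanishing of $\p\dot\vphi$ is literally licensed by the smooth computation in \eqref{2nd J functional}. I would circumvent this with the standard $\epsilon$-geodesic approximation: replace $\vphi(t)$ by smooth solutions of the non-degenerate complex Monge-Amp\`ere equation on $M\times[0,1]\times S^1$ with right-hand side of size $\epsilon$, along which $\ddot\vphi-|\p\dot\vphi|^2$ is positive of size $O(\epsilon)$. Using the uniform $C^{1,1}$ bounds (which bound $\om_\vphi$ from below and hence $\tr_{\om_\vphi}\chi$ from above) the now non-vanishing first term of \eqref{2nd J functional} is $O(\epsilon)$, so $f_\epsilon''\geq -C\epsilon$; letting $\epsilon\to 0$ and using $C^{1,1}$ convergence preserves convexity of the limit and the identity $\int_0^1 f''(t)\,dt=f'(1)-f'(0)=0$, from which the degenerate conclusion above is extracted. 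A cleaner shortcut is available when $\chi<0$: rewriting \eqref{lma} as $\tr_{\om_\vphi}\chi-\frac{\b}{V}\frac{\om^n}{\om_\vphi^n}=c_\b$ and linearizing along the \emph{linear} path $(1-s)\vphi_0+s\vphi_1$ produces an elliptic operator $\mathcal L=\int_0^1 DF_{\vphi_s}\,ds$ with no zeroth-order term and positive-definite leading symbol, since $-g_\vphi^{i\bar k}g_\vphi^{l\bar j}\chi_{i\bar j}$ is positive definite while the $\b$-contribution is positive semidefinite; then $\mathcal L(\vphi_1-\vphi_0)=0$ and the strong maximum principle on the closed manifold $M$ force $\vphi_1-\vphi_0$ to be constant. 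For $\chi>0$ the $\b$-term enters with the opposite sign and can destroy ellipticity, which is why I expect the geodesic argument to be the robust route covering both cases.
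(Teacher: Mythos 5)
Your proof is correct and follows essentially the same route as the paper: join the two $\tilde{\mathfrak J}^\b$-metrics by a $C^{1,1}$ geodesic, note that the geodesic equation reduces the second variation \eqref{2nd J functional} to $-\frac{1}{V}\int_M\chi_{i\bar j}\dot\vphi^i\dot\vphi^{\bar j}\om_\vphi^n$, integrate in $t$ against the vanishing of the first variation at both critical endpoints, and use the strict definiteness of $\chi$ to force $\p\dot\vphi\equiv 0$. The only differences are that you spell out the $\epsilon$-geodesic approximation to justify the computation at $C^{1,1}$ regularity (which the paper merely asserts is ``well-defined'') and you offer a supplementary maximum-principle shortcut for $\chi<0$; both are reasonable additions but do not change the argument.
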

\begin{proof}
Assume $\vphi_1$ and $\vphi_2$ are two $\tilde {\mathfrak J}^\b$-metrics. Then connecting them by the $C^{1,1}$ geodesic. Since all the computation above is well-defined along the $C^{1,1}$ geodesics, \eqref{2nd J functional} implies that
$$\delta^2\tilde {\mathfrak J}(\dot\vphi,\dot\vphi)\
=-\frac{1}{V}\int_M \chi_{i\bar j}(\om) \dot\vphi^i\dot\vphi^{\bar j}\om_\vphi^n.$$ Then integrating from $0$ to $1$, we have $$\frac{1}{V}\int_0^1\int_M \chi_{i\bar j}(\om) \dot\vphi^i\dot\vphi^{\bar j}\om_\vphi^ndt=\delta\tilde {\mathfrak J}(1)-\delta\tilde {\mathfrak J}(0)=0.$$ Hence, $\dot\vphi$ is constant and $\vphi_1$ and $\vphi_2$ differ by a constant.
\end{proof}

We use the notion
$$\tilde H = \tr _{\om_\vphi} \chi-c_\b-\frac{\b}{V}\frac{\om^n}{\om^n_\vphi}.$$
The $\tilde{\mathfrak{J}}^\b$-metric is a K\"ahler metric satisfying $$\tilde H=0.$$
We define the energy $\tilde E^\b$ as
\begin{align}\label{tildeE}
\tilde E^\b(\vphi)=\frac{1}{V}\int_M(\tr _{\om_\vphi} \chi-c_\b-\frac{\b}{V}\frac{\om^n}{\om^n_\vphi})^2\om^n_\vphi.
\end{align}
Then we have
$$\delta\tilde H(\dot\vphi)= -\dot\vphi^{i\bar j}\chi_{i\bar j}+\frac{\b}{V}\tri_\vphi\dot\vphi\frac{\om^n}{\om^n_\vphi}.$$
\begin{lem}\label{d E}The 1st derivative of the modified energy $\tilde E$ is
\begin{align}\label{d E}
\delta \tilde E^\b(\dot\vphi)=\frac{2}{V}\int_M\tilde H^{\bar j}\dot\vphi^{i}\chi_{i\bar j}\om^n_\vphi
 -\frac{2\b}{V^2}\int_M\tilde H_i \dot\vphi^i  \om^n.
\end{align}
\end{lem}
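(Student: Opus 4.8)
The plan is to differentiate the definition $\tilde E^\b(\vphi)=\frac1V\int_M\tilde H^2\,\om^n_\vphi$ directly, substitute the variation $\delta\tilde H$ recorded just before the lemma together with the standard identity $\delta(\om^n_\vphi)=(\tri_\vphi\dot\vphi)\,\om^n_\vphi$, and then integrate by parts to reorganise the result into the two displayed terms. The product rule first gives
\begin{align*}
\delta\tilde E^\b(\dot\vphi)=\frac{2}{V}\int_M\tilde H\,\delta\tilde H\,\om^n_\vphi+\frac{1}{V}\int_M\tilde H^2\,(\tri_\vphi\dot\vphi)\,\om^n_\vphi,
\end{align*}
and inserting $\delta\tilde H=-\dot\vphi^{i\bar j}\chi_{i\bar j}+\frac{\b}{V}(\tri_\vphi\dot\vphi)\frac{\om^n}{\om^n_\vphi}$ splits this into three pieces: a curvature term $T_1=-\frac2V\int_M\tilde H\,\dot\vphi^{i\bar j}\chi_{i\bar j}\,\om^n_\vphi$, a mixed $\b$-term $T_2=\frac{2\b}{V^2}\int_M\tilde H\,(\tri_\vphi\dot\vphi)\,\om^n$, and a curvature-free term $T_3=\frac1V\int_M\tilde H^2\,(\tri_\vphi\dot\vphi)\,\om^n_\vphi$.

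The heart of the argument is the integration by parts of $T_1$. Writing $\dot\vphi^{i\bar j}\chi_{i\bar j}=\chi^{k\bar l}\dot\vphi_{k\bar l}$ and moving one holomorphic derivative off $\dot\vphi_{k\bar l}=\nabla_k\dot\vphi_{\bar l}$, I obtain
\begin{align*}
T_1=\frac2V\int_M(\nabla_k\tilde H)\,\chi^{k\bar l}\,\dot\vphi_{\bar l}\,\om^n_\vphi+\frac2V\int_M\tilde H\,(\nabla_k\chi^{k\bar l})\,\dot\vphi_{\bar l}\,\om^n_\vphi.
\end{align*}
The first integral is already $\frac2V\int_M\tilde H^{\bar j}\dot\vphi^i\chi_{i\bar j}\,\om^n_\vphi$, the first target term. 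For the second I plan to use that $\chi$ is a \emph{closed} $(1,1)$-form, so that the Bianchi-type symmetry $\nabla_k\chi_{i\bar j}=\nabla_i\chi_{k\bar j}$ holds; since $\nabla g_\vphi=0$ this collapses the divergence to $\nabla_k\chi^{k\bar l}=g^{i\bar l}\p_i(\tr_{\om_\vphi}\chi)$. Because $\tr_{\om_\vphi}\chi=\tilde H+c_\b+\frac{\b}{V}\frac{\om^n}{\om^n_\vphi}$ and $c_\b$ is constant, this produces $A=\frac2V\int_M\tilde H\,\tilde H_i\,\dot\vphi^i\,\om^n_\vphi$ together with a $\b$-contribution $B=\frac{2\b}{V^2}\int_M\tilde H\,u_i\,\dot\vphi^i\,\om^n_\vphi$, where $u=\om^n/\om^n_\vphi$.

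It then remains to see that the extra pieces cancel in pairs. One further integration by parts, using $\tilde H\tilde H_i=\tfrac12(\tilde H^2)_i$, turns $A$ into $-\frac1V\int_M\tilde H^2(\tri_\vphi\dot\vphi)\,\om^n_\vphi=-T_3$, so $A+T_3=0$ and the whole curvature-free contribution disappears. For the $\b$-terms I rewrite $T_2$ via $\om^n=u\,\om^n_\vphi$ and integrate the Laplacian by parts, which yields $T_2=-\frac{2\b}{V^2}\int_M\tilde H_i\dot\vphi^i\,\om^n-B$; hence $T_2+B=-\frac{2\b}{V^2}\int_M\tilde H_i\dot\vphi^i\,\om^n$, the second target term. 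Adding the surviving pieces gives the claimed formula.

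I expect the main obstacle to be the second integral in the $T_1$ step: putting $\nabla_k\chi^{k\bar l}$ into usable form genuinely requires closedness of $\chi$ (not merely that it is of type $(1,1)$), and one must keep track of the fact that the indices are raised with the \emph{variable} metric $\om_\vphi$ while $\chi$ itself is fixed, so that the covariant derivative hits only $\chi$ and, after contraction, the trace $\tr_{\om_\vphi}\chi$. A secondary point needing care is confirming that the non-$\b$ remainder $A+T_3$ vanishes identically rather than leaving a stray gradient term; the clean cancellation is exactly what forces the final answer to contain only the two terms in the statement.
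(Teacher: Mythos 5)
Your proposal is correct and follows essentially the same route as the paper: differentiate $\frac{1}{V}\int_M\tilde H^2\om^n_\vphi$, substitute $\delta\tilde H$ and $\delta(\om^n_\vphi)=(\tri_\vphi\dot\vphi)\om^n_\vphi$, integrate the $\chi$-term by parts using closedness of $\chi$ to turn the divergence $\nabla_k\chi^{k\bar l}$ into the gradient of $\tr_{\om_\vphi}\chi=\tilde H+c_\b+\frac{\b}{V}\frac{\om^n}{\om^n_\vphi}$, and observe the two cancellations. The only difference is expository: you make both cancellations ($A+T_3=0$ and the $B$-term against part of $T_2$) explicit, whereas the paper records only the first and leaves the second implicit.
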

\begin{proof}
We calculate that
\begin{align*}
\delta\tilde E^\b(\dot\vphi)=\frac{2}{V}\int_M\tilde H(-\dot\vphi^{i\bar j}\chi_{i\bar j}+\frac{\b}{V}\tri_\vphi\dot\vphi\frac{\om^n}{\om^n_\vphi})\om^n_\vphi
+\frac{1}{V}\int_M\tilde H^2\tri_\vphi\dot\vphi\om^n_\vphi.
\end{align*}
The first term is
\begin{align*}
&\frac{2}{V}\int_M\tilde H^{\bar j}\dot\vphi^{i}\chi_{i\bar j}\om^n_\vphi
+\frac{2}{V}\int_M\tilde H\dot\vphi^{i}(\chi_{i\bar j})^{\bar j}\om^n_\vphi\\
&=\frac{2}{V}\int_M\tilde H^{\bar j}\dot\vphi^{i}\chi_{i\bar j}\om^n_\vphi
+\frac{2}{V}\int_M\tilde H\dot\vphi^{i}(\tilde H+\frac{\b}{V}\frac{\om^n}{\om^n_\vphi})_i\om^n_\vphi.
\end{align*}
While, the second term is
\begin{align*}
&\frac{2}{V}\int_M\tilde H \frac{\b}{V}\tri_\vphi\dot\vphi \frac{\om^n}{\om^n_\vphi} \om^n_\vphi\\
&=-\frac{2}{V}\int_M\tilde H_i \frac{\b}{V}\dot\vphi^i \frac{\om^n}{\om^n_\vphi} \om^n_\vphi-
\frac{2}{V}\int_M\tilde H \frac{\b}{V}\dot\vphi^i (\frac{\om^n}{\om^n_\vphi})_i \om^n_\vphi
\end{align*}
and the third term is
\begin{align*}
-\frac{2}{V}\int_M\tilde H g_\vphi^{i\bar j}\tilde H_{\bar j}\dot\vphi_i\om^n_\vphi
\end{align*}
which cancels the second component in the first term.
\end{proof}

The critical points of $\tilde E$ satisfy that
\begin{align*}
[\tilde H^{\bar j}\chi_{i\bar j}\om^n_\vphi
 -\frac{\b}{V}\tilde H_i \frac{\om^n}{ \om_\vphi^n}]^i=0.
\end{align*}
\begin{lem}
The 2nd derivative of the modified energy $\tilde E^\b$ is
\begin{align}\label{dd E h0}
\delta^2 \tilde  E^\b(u,v)
&=\frac{2}{V}\int_M(v^{p\bar q}\chi_{p\bar q})( u^{i\bar j}\chi_{i\bar j} ) \om^n_\vphi+\frac{2\b}{V^2}\int_Mg_\vphi^{k \bar j} (\tri_\vphi v\frac{\om^n}{\om^n_\vphi})_k g_\vphi^{i\bar l}u_{\bar l}\chi_{i\bar j}\om^n_\vphi\\
&
-\frac{2\b}{V^2}\int_Mg_\vphi^{i\bar j} (-v^{p\bar q}\chi_{p\bar q}+\frac{\b}{V}\tri_\vphi v\frac{\om^n}{\om^n_\vphi})_i u_{\bar j}  \om^n\nonumber.
\end{align}
\end{lem}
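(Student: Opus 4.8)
The plan is to obtain \eqref{dd E h0} by differentiating the first variation \eqref{d E} once more, evaluating the resulting bilinear form at a $\tilde{\mathfrak{J}}^\b$-metric, i.e.\ where $\tilde H=0$ (the only situation in which no explicit $\tilde H$ survives, consistent with the displayed formula). Since $\tilde E^\b=\frac1V\int_M\tilde H^2\om^n_\vphi$, at such a metric every contribution that carries an \emph{undifferentiated} factor of $\tilde H$ drops out; in particular all the terms produced by varying the inverse metric $g_\vphi^{i\bar j}$, the volume form $\om^n_\vphi$, or the raised slot in $\tilde H^{\bar j}$ vanish, so that only the terms in which the factors $\tilde H^{\bar j}$ and $\tilde H_i$ of \eqref{d E} are \emph{themselves} differentiated remain. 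Writing $\delta\tilde H(v)=-v^{p\bar q}\chi_{p\bar q}+\frac{\b}{V}\tri_\vphi v\,\frac{\om^n}{\om^n_\vphi}$ and $\delta(\tilde H_k)=(\delta\tilde H(v))_k$, the first term of \eqref{d E} then contributes $\frac{2}{V}\int_M g_\vphi^{k\bar j}(\delta\tilde H(v))_k\,g_\vphi^{i\bar l}u_{\bar l}\chi_{i\bar j}\,\om^n_\vphi$ and the second contributes $-\frac{2\b}{V^2}\int_M g_\vphi^{i\bar j}(\delta\tilde H(v))_i u_{\bar j}\,\om^n$; the latter is already the third displayed term.

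Next I would split $(\delta\tilde H(v))_k$ into its two pieces. The piece coming from $\frac{\b}{V}\tri_\vphi v\,\frac{\om^n}{\om^n_\vphi}$ reproduces verbatim the middle displayed term $\frac{2\b}{V^2}\int_M g_\vphi^{k\bar j}(\tri_\vphi v\,\frac{\om^n}{\om^n_\vphi})_k\,g_\vphi^{i\bar l}u_{\bar l}\chi_{i\bar j}\,\om^n_\vphi$. For the piece from $-v^{p\bar q}\chi_{p\bar q}$, namely $-\frac{2}{V}\int_M g_\vphi^{k\bar j}(v^{p\bar q}\chi_{p\bar q})_k\,g_\vphi^{i\bar l}u_{\bar l}\chi_{i\bar j}\,\om^n_\vphi$, I integrate by parts in $\nabla_k$, using that the metric and its inverse are parallel for the Kähler connection. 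The derivative falling on $u_{\bar l}$ yields $g_\vphi^{k\bar j}g_\vphi^{i\bar l}u_{k\bar l}\chi_{i\bar j}=u^{i\bar j}\chi_{i\bar j}$, producing the first displayed term $\frac{2}{V}\int_M(v^{p\bar q}\chi_{p\bar q})(u^{i\bar j}\chi_{i\bar j})\,\om^n_\vphi$.

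The essential structural input is the closedness of $\chi$. Since $\chi$ is a closed $(1,1)$-form, $\nabla_i\chi_{k\bar j}=\nabla_k\chi_{i\bar j}$, whence the contracted derivative satisfies $g_\vphi^{k\bar j}\nabla_k\chi_{i\bar j}=\partial_i\tr_{\om_\vphi}\chi$; at $\tilde H=0$ one has $\tr_{\om_\vphi}\chi=c_\b+\frac{\b}{V}\frac{\om^n}{\om^n_\vphi}$. This is precisely the identity already exploited in the derivation of \eqref{d E}, and I would use it to dispose of the term in which $\nabla_k$ falls on $\chi_{i\bar j}$ during the integration by parts.

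The obstacle I anticipate is organizational rather than conceptual: one must keep the bookkeeping straight through the integration by parts, since two distinct volume forms $\om^n_\vphi$ and $\om^n$ occur simultaneously, the covariant derivative of $\chi$ must be rewritten through the closedness identity, and the variations of $g_\vphi^{i\bar j}$ and of $\om^n_\vphi$ have to be confirmed to appear only multiplied by $\tilde H$, so that they vanish identically at the $\tilde{\mathfrak{J}}^\b$-metric. Checking that the residual contractions of the form $\langle\partial u,\partial(\om^n/\om^n_\vphi)\rangle$ regroup correctly, with due care for index raising and lowering and for the reality of the Hessian, is the one place where the computation must be carried out attentively rather than formally.
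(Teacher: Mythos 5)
Your proposal is essentially the paper's own computation: the paper likewise differentiates the coordinate form of \eqref{d E}, obtains the full six-term expression \eqref{dd E}, discards every term still carrying an unvaried factor $\tilde H_k$ (which is the tacit evaluation at a $\tilde{\mathfrak J}^\b$-metric that you correctly make explicit), and integrates the $(v^{p\bar q}\chi_{p\bar q})_k$ piece by parts to produce the first displayed term. The one place needing the care you already flag is exactly where the paper is also loose: the derivative landing on $\chi_{i\bar j}$ gives $u^i\p_i\tr_{\om_\vphi}\chi=u^i(\tilde H+\tfrac{\b}{V}\tfrac{\om^n}{\om^n_\vphi})_i$, of which only the $\tilde H_i$ part is discarded in the paper, so for $\b\neq0$ your argument and the paper's leave the same residual term unaccounted for (harmless in the $\b=0$ case in which the formula is subsequently used).
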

\begin{proof}
In the local coordinante, \eqref{d E} is written as
\begin{align*}
 \delta\tilde E^\b(u)=\frac{2}{V}\int_Mg_\vphi^{k \bar j} \tilde H_k g_\vphi^{i\bar l}u_{\bar l}\chi_{i\bar j}\om^n_\vphi-\frac{2\b}{V^2}\int_Mg_\vphi^{i\bar j}\tilde H_i u_{\bar j}  \om^n,
\end{align*}
%and $a'(v)=-v^{i\bar j}\chi_{i\bar j}$,
we obtain that
\begin{align}\label{dd E}
& \delta^2\tilde  E^\b(u,v)\\
&=-\frac{2}{V}\int_Mv^{k \bar j}\tilde H_k g_\vphi^{i\bar l}u_{\bar l}\chi_{i\bar j}\om^n_\vphi
+\frac{2}{V}\int_Mg_\vphi^{k \bar j} (-v^{p\bar q}\chi_{p\bar q}+\frac{\b}{V}\tri_\vphi v\frac{\om^n}{\om^n_\vphi})_k g_\vphi^{i\bar l}u_{\bar l}\chi_{i\bar j}\om^n_\vphi\nonumber\\
&
-\frac{2}{V}\int_Mg_\vphi^{k \bar j}\tilde H_k v^{i\bar l}u_{\bar l}\chi_{i\bar j}\om^n_\vphi
+\frac{2}{V}\int_Mg_\vphi^{k \bar j}\tilde H_k g_\vphi^{i\bar l}u_{\bar l}\chi_{i\bar j}\tri_\vphi v \om^n_\vphi\nonumber\\
&+\frac{2\b}{V^2}\int_Mv^{i\bar j}\tilde H_i u_{\bar j}  \om^n
-\frac{2\b}{V^2}\int_Mg_\vphi^{i\bar j} (-v^{p\bar q}\chi_{p\bar q}+\frac{\b}{V}\tri_\vphi v\frac{\om^n}{\om^n_\vphi})_i u_{\bar j}  \om^n\nonumber.
\end{align}
The second term is further reduced to,
\begin{align*}
&-\frac{2}{V}\int_Mg_\vphi^{k \bar j} (v^{p\bar q}\chi_{p\bar q})_k g_\vphi^{i\bar l}u_{\bar l}\chi_{i\bar j}\om^n_\vphi\\
&=-\frac{2}{V}\int_M(v^{p\bar q}\chi_{p\bar q})^{ \bar j}  u^i\chi_{i\bar j}\om^n_\vphi\\
&=\frac{2}{V}\int_M(v^{p\bar q}\chi_{p\bar q})( u^{i\bar j}\chi_{i\bar j} ) \om^n_\vphi
+\frac{2}{V}\int_M(v^{p\bar q}\chi_{p\bar q})  u^i \tilde H_i\om^n_\vphi.
\end{align*}
Thus the lemmas holds by inserting this formula into \eqref{dd E}.
% \begin{align*} III&=-\frac{2}{V}\int_Mg_\vphi^{k \bar j}\tilde H_k v^{i\bar l}u_{\bar l}\chi_{i\bar j}\om^n_\vphi\\ &=-\frac{2}{V}\int_M\tilde H^{ \bar j} \chi_{i\bar j} v^{i\bar l}u_{\bar l}\om^n_\vphi\\ &=\frac{2}{V}\int_M\tilde H^{ \bar j} \chi_{i\bar j} v^{\bar l}u^i_{\bar l}\om^n_\vphi \end{align*} and \begin{align*} IV&=\frac{2}{V}\int_Mg_\vphi^{k \bar j}\tilde H_k g_\vphi^{i\bar l}u_{\bar l}\chi_{i\bar j}\tri_\vphi v \om^n_\vphi\\ &=-\frac{2}{V}\int_M \tilde H^{ \bar j}\chi_{i\bar j}  u^{i} \tri_\vphi v \om^n_\vphi\\ &=\frac{2}{V}\int_M \tilde H^{ \bar j} \chi_{i\bar j} u (\tri_\vphi v)^{i} \om^n_\vphi. \end{align*}
\end{proof}

When $\beta=0$, the variational structure of $\tilde{\mathfrak J}_{\om,\chi}^0$ and $\tilde E^0$ is studied in  Chen \cite{MR2104078}. We denote
\begin{align*}
H=\tr _{\om_\vphi} \chi-c_0.
\end{align*}
The K\"ahler metric is called a $\mathfrak{J}$-metric if it satisfies $H=0$.
From \eqref{d E}, the 1st derivative of $\tilde E^0$-energy is
\begin{align}\label{ddd E}
\delta \tilde E^0(\dot\vphi)=\frac{2}{V}\int_MH^{\bar j}\dot\vphi^{i}\chi_{i\bar j}\om^n_\vphi.
\end{align}
From this formula, the critical metrics satisfy the equation
\begin{align}
[H^{\bar j}\chi_{i\bar j}]^{i}=0.
\end{align}
The critical metrics of the modified energy include the $\mathfrak{J}$-metrics.
\eqref{dd E h0} shows that, at the critical point of $\mathfrak{J}$,
\begin{align*}
\delta^2\tilde E^0(u,v)
=\frac{2}{V}\int_M(v^{p\bar q}\chi_{p\bar q})( u^{i\bar j}\chi_{i\bar j} ) \om^n_\vphi.
\end{align*}
So the $\mathfrak{J}$-metric is local minimiser of $ \tilde E^0$. However, it is not known whether all the critical metrics of the energy $\tilde E^0$ are minimisers.
While, \eqref{ddd E} suggests that when $\chi$ is strictly positive or negative, the critical metrics of the modified energy is the $\mathfrak{J}$-metric.

%\subsection{$C^{1,1}$ Geodesic Ray}
\section{Geodesics in the space of K\"aher potentials}
We recall the necessary progress of constructing the geodesic ray in this section for the next several sections. the existence of the $C^{1,1}$ geodesic segment is proved in Chen \cite{MR1863016}.
In Calamai-Zheng \cite{Calamai:2012uq}, we improve the following existence of the geodesic segment with slightly weaker boundary geometric conditions. Now we specify the geometric conditions on the boundary metrics.
\begin{defn}\label{spaceHC}
We label as $\mathcal H_C \subset \mathcal H_\Om$ one of the following spaces;
\begin{align*}
\mathfrak{I_1}&=\{\vphi\in \mathcal H_\Om\text{ {such that} }  \sup Ric(\om_\vphi)\leq C
\} ;\\
\mathfrak{I_2}&=\{\vphi\in\mathcal H_\Om\text{ {such that} } \inf Ric(\om_\vphi)\geq C
\} .
\end{align*}
\end{defn}
\begin{thm}\label{geo seg}(Calamai-Zheng \cite{Calamai:2012uq})
Any two K\"ahler metrics in $\mathcal H_C$ are connected by a unique $C^{1,1}$ geodesic. More precisely, it is the limit under the $C^{1,1}$-norm by a sequence of $C^{\infty}$ approximate geodesics.
\end{thm}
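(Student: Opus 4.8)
The plan is to realize a geodesic as a solution of a degenerate complex Monge-Amp\`ere equation and to obtain it as a $C^{1,1}$ limit of a family of nondegenerate regularizations. First I would rewrite the geodesic equation $\ddot\vphi-\vert\p\dot\vphi\vert^2_{\om_\vphi}=0$ in the Semmes-Donaldson form. Given $\vphi_0,\vphi_1\in\mathcal H_C$, I identify a path $t\mapsto\vphi(t)$ with an $S^1$-invariant function $\Phi$ on the product $M\times A$, where $A=[0,1]\times S^1$ is an annulus with coordinate $w$ and $t=\mathrm{Re}\,w$. Writing $\Om_0=\pi^*\om$ for the pullback of the fixed K\"ahler form (semi-positive, degenerate in the $A$-directions), the geodesic equation is equivalent to the homogeneous complex Monge-Amp\`ere equation
\[
(\Om_0+\i\p\bar\p\Phi)^{n+1}=0 \quad\text{on } M\times A,
\]
with Dirichlet data $\Phi\vert_{\{t=0\}}=\vphi_0$ and $\Phi\vert_{\{t=1\}}=\vphi_1$; the required positivity $\Om_0+\i\p\bar\p\Phi\geq 0$ encodes convexity of $\Phi$ in $t$. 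A $C^{1,1}$ solution of this problem is exactly a $C^{1,1}$ geodesic segment joining $\vphi_0$ and $\vphi_1$.

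Next I would solve the nondegenerate perturbation
\[
(\Om_0+\i\p\bar\p\Phi_\ep)^{n+1}=\ep\,\mu, \qquad \ep>0,
\]
for a fixed smooth positive volume form $\mu$ on $M\times A$, keeping the same boundary data. For each fixed $\ep$ this is a uniformly elliptic Dirichlet problem for a complex Monge-Amp\`ere equation, and the Caffarelli-Kohn-Nirenberg-Spruck theory together with Yau's continuity method produces a unique smooth solution $\Phi_\ep$; these are the $C^\infty$ approximate geodesics in the statement. The heart of the argument is then a chain of a priori estimates that are \emph{uniform in} $\ep$: a $C^0$ bound by comparison with linear barriers built from $\vphi_0,\vphi_1$; a uniform gradient bound, whose boundary part is obtained from sub- and supersolution barriers along $\{t=0\}$ and $\{t=1\}$; and finally the uniform estimate on the full complex Hessian $\i\p\bar\p\Phi_\ep$. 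Granting these, $\{\Phi_\ep\}$ is bounded in $C^{1,1}$, so extracting a limit as $\ep\to0$ yields a function $\Phi$ with bounded Hessian solving the degenerate equation weakly, realized precisely as the $C^{1,1}$ limit of the $\Phi_\ep$.

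The hard part will be the uniform second-order estimate, and this is exactly where the hypothesis $\vphi\in\mathcal H_C$ enters. The interior Laplacian bound follows from a Yau-Aubin computation applied to $\tri_{\Om_0+\i\p\bar\p\Phi_\ep}\big(\log\tr(\cdots)\big)$, where the background curvature terms are absorbed once $\ep$ is taken small. The delicate point is the \emph{boundary} second-order estimate: one controls the tangential and mixed second derivatives by differentiating the boundary data, but the double-normal derivative is reachable only through a barrier argument that in Chen's original construction (\cite{MR1863016}) required smooth, strictly positive boundary data. The improvement here is to show that a \emph{one-sided} Ricci bound already suffices: under $\mathfrak{I}_1=\{\sup Ric(\om_\vphi)\le C\}$ or $\mathfrak{I}_2=\{\inf Ric(\om_\vphi)\ge C\}$ (Definition \ref{spaceHC}), I would construct an explicit sub/supersolution near each boundary component whose mixed derivatives dominate those of $\Phi_\ep$, thereby bounding the normal second derivative uniformly in $\ep$; the bulk of the work is this barrier construction in the two cases. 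Uniqueness is comparatively routine: two $C^{1,1}$ geodesics with the same endpoints solve the same degenerate Monge-Amp\`ere equation, and the pluripotential comparison principle (equivalently, convexity of arclength along $\mathcal H_\Om$) forces them to coincide.
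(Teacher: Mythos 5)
There is nothing in this paper to compare your proof against: Theorem~\ref{geo seg} is imported from Calamai--Zheng \cite{Calamai:2012uq} (building on Chen \cite{MR1863016}) and is stated in Section~3 purely as background, with no proof given here. Your outline is the standard strategy of those references --- the Semmes--Donaldson reformulation of the geodesic equation as the Dirichlet problem for the homogeneous complex Monge--Amp\`ere equation $(\Om_0+\i\p\bar\p\Phi)^{n+1}=0$ on $M\times A$, the nondegenerate regularization $(\Om_0+\i\p\bar\p\Phi_\ep)^{n+1}=\ep\,\mu$ producing the smooth approximate geodesics, uniform-in-$\ep$ estimates through second order, and uniqueness via the pluripotential comparison principle --- so at the level of architecture it is the right reconstruction, and you have correctly located the only place where the hypothesis $\vphi\in\mathcal H_C$ can matter, namely the boundary second-order estimate.

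Two caveats. First, the entire content of the cited theorem lives inside the step you defer (``I would construct an explicit sub/supersolution\ldots''): the barrier construction under a merely one-sided Ricci bound, and more importantly the fact that the resulting $C^{1,1}$ bound depends only on the constant $C$ of Definition~\ref{spaceHC} and the endpoints, is precisely what the paper needs later (Lemma~3.3 and Proposition~\ref{c1 nega geo conv} take limits of geodesic segments $\gamma_t(s)$ as $t\to\infty$, which requires the bound to be uniform over the family of boundary data $\rho(t)\in\mathcal H_C$, not merely finite for each pair). Your sketch does not flag this uniformity, and without it the theorem would not serve its purpose in the paper. Second, boundedness of $\{\Phi_\ep\}$ in $C^{1,1}$ gives compactness only in $C^{1,\alpha}$ (and weakly in $W^{2,p}$), not convergence in the $C^{1,1}$ norm; the paper itself concedes this in the proof of Lemma~3.3, so the phrase ``limit under the $C^{1,1}$-norm'' should be read as ``limit with uniform $C^{1,1}$ bounds,'' both in the statement and in your final extraction step.
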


Due to Calabi-Chen \cite{MR1969662}, $\mathcal H$ has positive semi-definite curvature in the sense of Aleksandrov. Two geodesic ray $\rho_i$ are called paralleling if the geodesic distance between $\rho_1(t)$ and $\rho_2(t)$ is uniformly bounded.
\begin{lem}
Given a geodesic ray $\rho(t)$ in $\mathcal H_C$ and a K\"ahler potential $\vphi_0$ which is not in $\rho(t)$. There is a $C^{1,1}$ geodesic ray starting from $\vphi_0$ and paralleling to $\rho(t)$.
\end{lem}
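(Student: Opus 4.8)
The plan is to obtain the desired ray as a limit of the geodesic segments joining $\vphi_0$ to the points $\rho(R)$ as $R\to\infty$; this is the standard construction of an asymptotic ray in a space of nonpositive curvature, adapted to the $C^{1,1}$ setting. Normalize $\rho$ to unit speed and set $D=d(\vphi_0,\rho(0))$. After replacing the defining constant by a larger one we may assume $\vphi_0$ and all $\rho(R)$ lie in a single space $\mathcal H_C$ (the Ricci bound defining $\mathcal H_C$ holds for the fixed potential $\vphi_0$ once $C$ is enlarged). Then \thmref{geo seg} provides, for each $R>0$, a unique $C^{1,1}$ geodesic $\gamma_R$ from $\vphi_0=\gamma_R(0)$ to $\rho(R)$, which I parametrize affinely on $[0,1]$; its constant speed is $\ell_R:=d(\vphi_0,\rho(R))$, and the triangle inequality gives $R-D\le\ell_R\le R+D$, so $\ell_R\to\infty$ and $\ell_R/R\to1$.

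The parallelism is read off from the nonpositive curvature of $\mathcal H$ established by Calabi--Chen \cite{MR1969662}, in the form that the geodesic distance is convex along affinely parametrized pairs of geodesics. Comparing $\gamma_R$ with $\rho|_{[0,R]}$, affinely parametrized on $[0,1]$ as $\lambda\mapsto\rho(\lambda R)$, whose endpoints are $\rho(0)$ and $\rho(R)=\gamma_R(1)$, convexity yields
\[
d(\gamma_R(\lambda),\rho(\lambda R))\le (1-\lambda)\,d(\vphi_0,\rho(0))=(1-\lambda)D\le D,\qquad \lambda\in[0,1].
\]
This is the uniform bound that must survive in the limit. For the convergence, I compare two segments $\gamma_{R_1},\gamma_{R_2}$ ($R_1<R_2$) issuing from the common point $\vphi_0$. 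In the geodesic triangle $(\vphi_0,\rho(R_1),\rho(R_2))$ the side $\rho(R_1)\rho(R_2)$ has length $R_2-R_1$, so the Euclidean comparison angle $\bar\theta$ at $\vphi_0$ obeys
\[
\cos\bar\theta=\frac{\ell_{R_1}^2+\ell_{R_2}^2-(R_2-R_1)^2}{2\,\ell_{R_1}\ell_{R_2}}\to 1\qquad(R_1,R_2\to\infty),
\]
because $\ell_{R_i}=R_i+O(1)$. By the $\mathrm{CAT}(0)$ comparison, for a fixed arc-length parameter $t$ the two points at arc length $t$ from $\vphi_0$, namely $\gamma_{R_1}(t/\ell_{R_1})$ and $\gamma_{R_2}(t/\ell_{R_2})$, satisfy $d\le 2t\sin(\bar\theta/2)\to0$; hence for each fixed $t$ the family is $d$-Cauchy.

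The genuine difficulty is upgrading this $d$-convergence into a $C^{1,1}$ geodesic ray, since convergence in the weak Riemannian distance $d$ does not by itself produce a $C^{1,1}$ potential, let alone a geodesic. To handle this I would invoke the a priori estimates behind \thmref{geo seg}: each $\gamma_R$ is the $C^{1,1}$-limit of smooth $\epsilon$-approximate geodesics, and since the endpoints $\rho(R)$ stay in the single space $\mathcal H_C$, the associated second-order estimates are uniform in $R$ on every compact arc-length interval $[0,T]$. This yields uniform $C^{1,1}$ bounds on the restrictions $\gamma_R|_{[0,T]}$, so that a diagonal extraction over $T$ (with the $d$-Cauchy property pinning down the limit) produces a $C^{1,1}$ curve $\gamma(t):=\lim_{R\to\infty}\gamma_R(t/\ell_R)$; a locally uniform $C^{1,1}$ limit of geodesics again satisfies the geodesic equation, so $\gamma$ is a $C^{1,1}$ geodesic ray from $\vphi_0$.

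Finally I pass to the limit in the parallelism estimate. At arc length $t$ one has $\lambda=t/\ell_R\to0$ and $\rho(\lambda R)=\rho(tR/\ell_R)\to\rho(t)$ because $R/\ell_R\to1$, so letting $R\to\infty$ in $d(\gamma_R(t/\ell_R),\rho(tR/\ell_R))\le(1-t/\ell_R)D$ gives $d(\gamma(t),\rho(t))\le D$ for all $t\ge0$. Thus $\gamma$ is a $C^{1,1}$ geodesic ray starting at $\vphi_0$ whose distance to $\rho$ is uniformly bounded, i.e. paralleling $\rho$, as asserted. I expect the regularity step of the third paragraph, namely extracting a bona fide $C^{1,1}$ geodesic ray in the incomplete infinite-dimensional space $\mathcal H_\Om$ from mere metric convergence, to be the main obstacle, and it is precisely there that the uniformity of the estimates in \thmref{geo seg} over the fixed class $\mathcal H_C$ is essential.
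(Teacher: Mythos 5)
Your proposal follows essentially the same route as the paper: connect $\vphi_0$ to the points $\rho(t)$ by the $C^{1,1}$ geodesic segments of \thmref{geo seg}, use the uniformity of the $C^{1,1}$ bounds to extract a limit ray, and read off parallelism from the nonpositive curvature of $\mathcal H$. The paper's own proof is a two-line version of this; your CAT(0) comparison-angle argument for the Cauchy property and the convexity-of-distance bound $d(\gamma(t),\rho(t))\le D$ are exactly the details it leaves implicit, and they are correct.
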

\begin{proof}
According to \thmref{geo seg} we could connect $\vphi_0$ and $\rho(t)$ by a $C^{1,1}$ geodesic segment $\gamma_t(s)$ which have uniform $C^{1,1}$ norm. Thus after taking limit of the parameter $t$, we obtain a limit geodesic ray in $W^{2,p},\forall p\geq1$ and $C^{1,\a},\forall \a<1$,
$$\gamma(s)=\lim_{t\rightarrow\infty}\gamma_t(s).$$
\end{proof}
\begin{rem}
The condition of $\rho(t)$ could be weakened to be the tamed condition in Chen \cite{MR2471594}. We only require that there is a $\tilde\rho(t)\in \mathcal H_C$ and $\tilde\rho(t)-\rho(t)$ is uniformly bounded.
\end{rem}
%%%%%%%%%%%%%%%%%%%%%%%%%%%%%%%%%%%%%%%%%%%%%%%
%%%%%%%%%%%%%%%%%%%%%%%%%%%%%%%%%%%%%%%%%%%
\section{A functional inequality of $\tilde {\mathfrak J}^\b$ and $\tilde E^\b$}
We first prove a functional inequality.
\begin{prop}
Let $\vphi_0$ and $\vphi_1$ be two K\"ahler potentials then the following inequality holds.
\begin{align*}
\tilde {\mathfrak J}^\b(\vphi_1)-\tilde {\mathfrak J}^\b(\vphi_0)\leq d(\vphi_0,\vphi_1)\cdot \sqrt{ \tilde E^\b(\vphi_1)  }.
\end{align*}
\end{prop}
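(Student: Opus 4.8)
The plan is to connect $\vphi_0$ and $\vphi_1$ by the $C^{1,1}$ geodesic $\vphi(t)$, $t\in[0,1]$, furnished by \thmref{geo seg}, and to control the growth of $f(t):=\tilde{\mathfrak J}^\b(\vphi(t))$ along it. Since $\vphi(t)$ is a constant speed minimising geodesic parametrised on $[0,1]$, its speed $\big(\frac1V\int_M \dot\vphi^2\,\om_{\vphi(t)}^n\big)^{1/2}$ is independent of $t$ and equals the length of the geodesic, namely $d(\vphi_0,\vphi_1)$. The first ingredient is to rewrite the first variation of $\tilde{\mathfrak J}^\b$: using $n\,\chi\wedge\om_\vphi^{n-1}=(\tr_{\om_\vphi}\chi)\,\om_\vphi^n$ and $\om^n=\frac{\om^n}{\om_\vphi^n}\om_\vphi^n$, the bracket in the first variation formula equals $\big(c_\b-\tr_{\om_\vphi}\chi+\frac{\b}{V}\frac{\om^n}{\om_\vphi^n}\big)\om_\vphi^n=-\tilde H\,\om_\vphi^n$, so that
$$f'(t)=\delta\tilde{\mathfrak J}^\b(\dot\vphi)=-\frac1V\int_M \dot\vphi\,\tilde H\,\om_{\vphi(t)}^n.$$

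Next I would extract the two inequalities that together yield the claim. Bounding $f'(1)\le|f'(1)|$ and applying the Cauchy--Schwarz inequality at the endpoint $t=1$ gives
$$f'(1)\le \Big(\frac1V\int_M \tilde H^2\,\om_{\vphi_1}^n\Big)^{1/2}\Big(\frac1V\int_M \dot\vphi^2\,\om_{\vphi_1}^n\Big)^{1/2}=\sqrt{\tilde E^\b(\vphi_1)}\cdot d(\vphi_0,\vphi_1),$$
where the last equality uses the definition \eqref{tildeE} and the constancy of the speed. On the other hand, along the geodesic the factor $(\ddot\vphi-|\p\dot\vphi|^2)$ in the second variation formula \eqref{2nd J functional} vanishes, whence $f''(t)=-\frac1V\int_M\chi_{i\bar j}\dot\vphi^i\dot\vphi^{\bar j}\om_{\vphi(t)}^n\ge 0$ because $\chi$ is negative semi-definite; thus $f'$ is non-decreasing on $[0,1]$. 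Consequently $f'(t)\le f'(1)$ for every $t\in[0,1]$, and integrating,
$$\tilde{\mathfrak J}^\b(\vphi_1)-\tilde{\mathfrak J}^\b(\vphi_0)=\int_0^1 f'(t)\,dt\le f'(1)\le d(\vphi_0,\vphi_1)\sqrt{\tilde E^\b(\vphi_1)},$$
which is exactly the asserted inequality.

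The main obstacle is that the geodesic $\vphi(t)$ is only $C^{1,1}$, so neither the variation computations nor the geodesic equation $\ddot\vphi=|\p\dot\vphi|^2$ hold literally along $\vphi(t)$. I would handle this in the standard manner of Chen, approximating $\vphi(t)$ by the smooth $\epsilon$-approximate geodesics underlying \thmref{geo seg} (whose $C^{1,1}$ norms are uniformly bounded), carrying out the computation above for each approximation, and passing to the limit. The convexity inequality $f''\ge 0$ is stable under this limit since it requires only the almost-everywhere bound supplied by the approximate geodesic equation together with the sign of $\chi$, and the identity $\int_0^1 f'(t)\,dt=f(1)-f(0)$ survives because $f$ remains absolutely continuous in the limit. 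One must also verify that $\tilde E^\b(\vphi(t))$ and $f'(t)$ are continuous up to $t=1$ under the approximation, so that evaluating Cauchy--Schwarz at the endpoint is legitimate; this is precisely where the uniform $C^{1,1}$ control, and the nondegeneracy of $\om_{\vphi(t)}^n$ along the (tamed) geodesic, are needed.
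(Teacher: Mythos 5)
Your proof is correct and follows essentially the same route as the paper's: connect the endpoints by the $C^{1,1}$ geodesic, use convexity of $\tilde{\mathfrak J}^\b$ along it (via the second variation formula \eqref{2nd J functional} with $\chi$ negative semi-definite) to bound the derivative by its value at $t=1$, and conclude by Cauchy--Schwarz at that endpoint. The only differences are presentational: you spell out the approximation argument for the $C^{1,1}$ regularity and make explicit that the convexity step needs $\chi\le 0$, a hypothesis the proposition's statement leaves implicit but which the paper's proof also uses.
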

\begin{proof}
The functional inequality is proved by direct computation. Let $\rho(t)$ be a $C^{1,1}$ geodesic segment connecting $\vphi_0$ and $\vphi_1$.
\begin{align*}
\tilde {\mathfrak J}^\b(\vphi_1)-\tilde {\mathfrak J}^\b(\vphi_0)
&\leq \int_0^1 d\tilde {\mathfrak J}^\b(\frac{\p\rho}{\p t})_{\vphi_1} dt\\
&\leq \sqrt{\frac{1}{V}\int_M\tilde H^2\om^n_{\vphi_1}}\cdot \sqrt{\int_0^1\int_M (\frac{\p\rho}{\p t})^2\om^n_{\vphi_1}dt }.
\end{align*}
Thus the resulting inequality follows from the H\"older inequality.
\end{proof}

%%%%%%%%%%%%%%%%%%%%%%%%%%%%%%%%%%%%%%%%%%%%%%%%%%%%%%%%%%%%%%%%%%%%%%%%%%%%%%%%%%%%%%%%%%%%%%%%%%%%%%%%%%%%%%%%%%%%%%%%%%%%%%%%

\section{Proof of \thmref{c1 nega j low bdd}}
\begin{proof}
Let $\vphi_1$ be any K\"ahler potential in $\mathcal H_\Om$ and $\vphi_0$ be a $\tilde {\mathfrak J}^\b$-metric. Connecting $\vphi_1$ and $\vphi_0$ by a $C^{1,1}$ geodesic segment $\gamma(t)$ and computing the expansion formula along $\gamma(t)$
\begin{align*}
\tilde {\mathfrak J}^\b(1)-\tilde {\mathfrak J}^\b(0)
&=\int_0^1 \frac{\p \tilde {\mathfrak J}^\b}{\p t} dt\\
&=\int_0^1 \frac{\p \tilde {\mathfrak J}^\b}{\p t}(t)- \frac{\p \tilde {\mathfrak J}^\b}{\p t}(0)dt\\
&=\int_0^1\int_0^t \frac{\p^2 \tilde {\mathfrak J}^\b}{\p t^2} dsdt.
\end{align*}
In the second identify we use the assumption that $\vphi_0$ is a $\tilde {\mathfrak J}^\b$-matric, so $$\frac{\p \tilde {\mathfrak J}^\b}{\p t}(0)=0.$$
Applying the 2nd formula of the $\tilde {\mathfrak J}^\b$, \lemref{2nd J functional}, we see that $$(\tilde {\mathfrak J}^\b)''\geq 0$$ along $\gamma(t)$. As a result, we obtain that $$\tilde {\mathfrak J}^\b(1)\geq\tilde {\mathfrak J}^\b(0).$$
Furthermore, assume that $\vphi_1$ is another $\tilde {\mathfrak J}^\b$-metric when the solution is not unique, then we have $$\tilde {\mathfrak J}^\b(1)\geq\tilde {\mathfrak J}^\b(0).$$ Switching the positions of $\vphi_0$ and $\vphi_1$, we see that all $\tilde {\mathfrak J}^\b$-metrics has the same critical value of $\tilde {\mathfrak J}^\b$.
\end{proof}

%%%%%%%%%%%%%%%%%%%%%%%%%%%%%%%%%%%%%%%%%%%%%%%%%%%%%%%%%%%%%%%%%%%%%%%%%%%%%%%%%%%%

\section{Proof of \thmref{lower bound of E}}

\begin{proof}
Let $\rho(t)$ be a geodesic ray parameterized by the arc length and satisfy the assumption in the theorem.
Let $\vphi_0$ be a K\"ahler potential outside $\rho(t)$ and connecting $\vphi_0$ and $\rho(t)$ by a $C^{1,1}$ geodesic $\gamma_t(s)$ which is also parameterized by the arc length. Let $\theta$ be the angle expanding by $\overrightarrow{\rho(t)\rho(0)}$ and $\overrightarrow{\rho(t)\vphi(0)}$.

Since $\mathcal H_\Om$ is nonpositive curve, we obtain
$$d(\vphi_0,\rho(0))\geq d$$
by comparing the cosine formulae in the Euclidean space $$d^2=d^2(\vphi_0,\rho(t))+d^2(\rho(0),\rho(t))-2d(\vphi_0,\rho(t))d(\rho(0),\rho(t))\cos\theta.$$ 
Then knowing that $$d(\rho(0),\rho(t))=t,$$ and letting $d_t=d(\vphi_0,\rho(t))$ be the distance between $\vphi_0$ and $\rho(t)$, we have
\begin{align*}
d_0^2&\geq d_t^2+t^2-2d_t \cdot t\cdot \cos\theta\\
&=d_t^2+t^2-2d_t \cdot t +2d_t \cdot t-2d_t \cdot t\cdot\cos\theta\\
&\geq 2d_t \cdot t\cdot (1-\cos\theta).
\end{align*}
Thus the cosine formula implies
\begin{align*}
2(1-\cos\theta)
\leq \frac{d^2_0}{t \cdot d_t}.
\end{align*}
%The constant $C$ is depending on the corresponding angle on the Euclidean space.
While, the triangle inequality implies that
$$t-d_0\leq d_t\leq t+d_0.$$
When $t$ is sufficient large, we further have 
$$d_0\leq \frac{t}{2}.$$
Thus 
\begin{align}\label{lowerbound tri}
0&\leq 2 (1-(\frac{\p \rho}{\p t},\frac{\p\gamma}{\p s}))_{\rho(t)}\\
&=2(1-\cos\theta)\nonumber\\
&\leq \frac{d^2_0}{t \cdot d_t}\nonumber\\
&\leq \frac{d^2_0}{t \cdot (t-d_0)}\nonumber\\
&\leq \frac{2d^2_0}{t ^2}\nonumber.
\end{align}

Applying the H\"older inequality to
\begin{align*}
d\tilde {\mathfrak J}^\b(\frac{\p\gamma}{\p s})_{\rho(t)}
\leq d\tilde {\mathfrak J}^\b(\frac{\p\gamma}{\p s}-\frac{\p\rho}{\p t})_{\rho(t)}+d\tilde {\mathfrak J}^\b(\frac{\p\rho}{\p t})_{\rho(t)},
\end{align*}
then using \eqref{lowerbound tri}, we obtain
\begin{align}\label{key inequality}
d\tilde {\mathfrak J}^\b(\frac{\p\gamma}{\p s})_{\rho(t)}
&\leq \sqrt{\tilde E^\b(\rho(t))}\sqrt{2-2(\frac{\p\gamma}{\p s},\frac{\p\rho}{\p t})_{\rho(t)}}+d\tilde {\mathfrak J}^\b(\frac{\p\rho}{\p t})_{\rho(t)}\nonumber\\
&\leq \sqrt{\tilde E^\b(\rho(t))}\frac{\sqrt{2}\cdot d_0}{t}+d\tilde {\mathfrak J}^\b(\frac{\p\rho}{\p t})_{\rho(t)}.
\end{align}
Since $\rho(t)$ is effective $$\tilde E^\b(\rho(t))=o(t) t^2,$$ the first term becomes $o(t)$. Then \begin{align}\label{angle 2 geodesic}
d\tilde {\mathfrak J}^\b(\frac{\p\gamma}{\p s})_{\rho(t)}
&\leq o(t)+d\tilde {\mathfrak J}^\b(\frac{\p\rho}{\p t})_{\rho(t)}.
\end{align}

On the other hand, note that $(\tilde {\mathfrak J}^\b)'$ and $(\tilde {\mathfrak J}^\b)''$ are well-defined along $C^{1,1}$ geodeisc. When $\chi$ is negative semi-definite, from \lemref{2nd J functional},
\begin{align*}
(\tilde{\mathfrak{J}}^\b)''(\gamma(s))\geq 0.
\end{align*}
So $$d\tilde {\mathfrak J}^\b(\frac{\p\gamma}{\p s})_{\vphi(0)}
\leq d\tilde {\mathfrak J}^\b(\frac{\p\gamma}{\p s})_{\rho(t)}.$$
Thus combining \eqref{angle 2 geodesic}, we have
\begin{align*}
d\tilde {\mathfrak J}^\b(\frac{\p\gamma}{\p s})_{\vphi(0)}
&\leq o(t)+d\tilde {\mathfrak J}^\b(\frac{\p\rho}{\p t})_{\rho(t)}.
\end{align*}

Inverting this inequality,
\begin{align}\label{angle 2 geodesic1}
-o(t)-d\tilde {\mathfrak J}^\b(\frac{\p\rho}{\p t})_{\rho(t)}
\leq -d\tilde {\mathfrak J}^\b(\frac{\p\gamma}{\p s})_{\vphi(0)}.
\end{align}
The right hand side is controlled by the H\"older inequality again
\begin{align*}
\sqrt{\tilde E^\b(\vphi_0)}\cdot(\int_M(\frac{\p\gamma}{\p s})^2\vert_{s=0}\om^n_{\vphi_0})^\frac{1}{2}=\sqrt{\tilde E^\b(\vphi_0)}.
 \end{align*}
 The inequality follows from choosing the unit arc-length of $\gamma$.
Taking $t\rightarrow\infty$ on both sides of \eqref{angle 2 geodesic1},
\begin{align*}
-\mathfrak{F}^\b(\rho)
&\leq \sqrt{\tilde E^\b(\vphi_0)}.
 \end{align*}
 Thus the theorems follows.
\end{proof}

%%%%%%%%%%%%%%%%%%%%%%%%%%%%%%%%%%%%%%%%%%%%%%%%%%%%%%%%%%%%%%%%%%%%%%%%%%%%%%%%%%%%
\section{Proof of \thmref{lowerboundEJ}}
\begin{proof}
Since when $\chi$ is negative semi-definite, $({\tilde {\mathfrak J}}^\b)''\geq 0$ along geodesic ray $\gamma_t(s)$, $\frac{\p {\tilde {\mathfrak J}}^\b}{\p s}$ is non-decreasing. Then letting $\tau(t)$ be the length of the $\gamma_t(s)$, we have
\begin{align*}
\tilde {\mathfrak J}^\b(\rho(t))-\tilde {\mathfrak J}^\b(\vphi_0)
&=\int_0^{\tau(t)}d\tilde {\mathfrak J}^\b(\frac{\p\gamma}{\p s}) ds\\
&\leq \int_0^{\tau(t)}d\tilde {\mathfrak J}^\b(\frac{\p\gamma}{\p s})_{\rho(t)} ds.
\end{align*}

From \eqref{key inequality} in the proof above, we obtain that
\begin{align}\label{constructray}
d\tilde {\mathfrak J}^\b(\frac{\p\gamma}{\p s})_{\rho(t)}
&\leq \sqrt{\tilde E^\b(\rho(t))}\sqrt{2-2(\frac{\p\gamma}{\p s},\frac{\p\rho}{\p t})_{\rho(t)}}+d\tilde {\mathfrak J}^\b(\frac{\p\rho}{\p t})_{\rho(t)}\nonumber\\
&\leq \sqrt{\tilde E^\b(\rho(t))}\frac{\sqrt {2}\cdot d_0}{t}+d\tilde {\mathfrak J}^\b(\frac{\p\rho}{\p t})_{\rho(t)}.
\end{align}
From the assumption that $\rho(t)$ is semi-destabilising, so $$d\tilde {\mathfrak J}^\b(\frac{\p\rho}{\p t})_{\rho(t)}\leq 0.$$
Putting the inequalities above together, we arrive at
\begin{align*}
\tilde {\mathfrak J}^\b(\rho(t))-\tilde {\mathfrak J}^\b(\vphi_0)
\leq \sqrt{\tilde E^\b(\rho(t))}\frac{C\cdot d(\vphi_0,\rho(0))}{t} \tau(t).
\end{align*}
Taking limit of $t$, since $$\tau(t)=O(t)$$ and from assumption in \thmref{lowerboundEJ} along $\rho(t)$, $$\lim_{t\rightarrow\infty}\sqrt{\tilde E^\b(\rho(t))}=0,$$ we have
$$\tilde {\mathfrak J}^\b(\vphi_0)\geq \lim_{t\rightarrow\infty} \tilde {\mathfrak J}^\b(\rho(t)).$$
Thus the theorem follows from the assumption that $\tilde {\mathfrak J}^\b$ is bounded below along $\rho(t)$.
\end{proof}

%%%%%%%%%%%%%%%%%%%%%%%%%%%%%%%%%%%%%%%%%%%%%%%%%%%%%%%%%%%%%%%%%%%%%%%%%%%%%%%%%%%%%%%%%%%%%%%%%%%%%%%%%%%%%%%%%%%%%%%%%%%%%%%%

%%%%%%%%%%%%%%%%%%%%%%%%%%%%%%%%%%%%%%%%%%%%%%%%%%%%%%%%%%%%%%%%%%%%%%%%%%%%%%%%%%%%%%%%%%%%%%%%%%%%%%%%%%%%%%%%%%%%%%%%%%%%%%%%

\section{Geodesic stability}
Inspired from the geodesic conjecture of the extremal metrics in Donaldson \cite{MR1736211}, we proposal a counterpart of $\tilde{\mathfrak{J}}^\b$-metric.
\begin{conj}\label{conj}
The following are equivalent:
\begin{enumerate}
  \item There is no $\tilde{\mathfrak{J}}^\b$-metric in $\mathcal{H}_\Om$.
  \item There is infinite geodesic ray $\vphi(t)$, $t\in[0,\infty)$, in $\mathcal {H}_\Om$ such that 
  \begin{align}\label{criterion}
  \frac{1}{V}\int_{M}\frac{\p \vphi}{\p t}(c_\b-\tr _{\om_\vphi} \chi+\frac{\b}{V}\frac{\om^n}{\om^n_\vphi})\om_\vphi^n>0
  \end{align} for all $t\in[0,\infty)$.
  \item For any point $\vphi\in\mathcal{H}_\Om$, there is a geodesic ray in $(2)$ starting at $\vphi$.
\end{enumerate}
\end{conj}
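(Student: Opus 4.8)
The statement is a Yau--Tian--Donaldson type dichotomy, and the first observation is that the integrand in \eqref{criterion} is exactly the first variation $d\tilde{\mathfrak J}^\b(\tfrac{\p\vphi}{\p t})$ of the lemma of \S2 (using $n\,\chi\wedge\om_\vphi^{n-1}=(\tr_{\om_\vphi}\chi)\om_\vphi^n$), so that $(2)$ asks for a geodesic ray along which $\tilde{\mathfrak J}^\b$ moves strictly monotonically for all $t$. Because $\chi$ is semi-definite, \eqref{2nd J functional} shows that along a $C^{1,1}$ geodesic $(\tilde{\mathfrak J}^\b)''=-\tfrac1V\int_M\chi_{i\bar j}\dot\vphi^i\dot\vphi^{\bar j}\om_\vphi^n$ has a fixed sign, so $t\mapsto d\tilde{\mathfrak J}^\b(\tfrac{\p\vphi}{\p t})$ is monotone and its limit $\mathfrak F^\b(\rho)$ from \eqref{euro} exists. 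The plan is to prove $(3)\Rightarrow(2)$ trivially, $(2)\Rightarrow(1)$ by a comparison/monotonicity argument, and $(1)\Rightarrow(3)$ by constructing a ray from every basepoint; the last implication carries essentially all the difficulty.

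The implication $(3)\Rightarrow(2)$ is immediate, by specializing $(3)$ to a single $\vphi\in\cH_\Om$. For $(2)\Rightarrow(1)$ I would argue the contrapositive: assume a $\tilde{\mathfrak J}^\b$-metric $\vphi_*$ exists. By \thmref{c1 nega j low bdd} it is extremal and $d\tilde{\mathfrak J}^\b(\,\cdot\,)|_{\vphi_*}=0$. Joining $\vphi_*$ to the points $\vphi(t)$ of any ray by the $C^{1,1}$ geodesics of \thmref{geo seg} and comparing, via the monotone first variation coming from \eqref{2nd J functional}, the slopes along these geodesics with the vanishing slope at $\vphi_*$, one sees that $d\tilde{\mathfrak J}^\b(\tfrac{\p\vphi}{\p t})$ cannot retain the strict sign of \eqref{criterion} for all $t$. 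The delicate point here is that semi-definiteness of $\chi$ yields only the non-strict conclusion --- this is precisely the weak geodesic semi-stability of \thmref{geodesicstability} --- so upgrading it to the strict failure of \eqref{criterion} demands a coercivity input beyond bare convexity/concavity.

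The heart of the matter is $(1)\Rightarrow(3)$. Fixing a basepoint $\vphi$, I would follow Chen's scheme: take an extremizing sequence $\vphi_k$ for $\tilde{\mathfrak J}^\b$; since no $\tilde{\mathfrak J}^\b$-metric attains the extremal value, $d(\vphi,\vphi_k)\to\infty$. Connect $\vphi$ to each $\vphi_k$ by the geodesics of \thmref{geo seg}, reparametrize by arclength, and extract a limiting $C^{1,1}$ geodesic ray $\rho$ issuing from $\vphi$ by the limit construction of \S3 together with the nonpositive curvature of Calabi--Chen. The functional inequality $\tilde{\mathfrak J}^\b(\vphi_1)-\tilde{\mathfrak J}^\b(\vphi_0)\le d(\vphi_0,\vphi_1)\sqrt{\tilde E^\b(\vphi_1)}$ of \S4 controls the motion of $\tilde{\mathfrak J}^\b$ along the approximating geodesics, and one hopes to pass a definite sign of $d\tilde{\mathfrak J}^\b(\tfrac{\p\rho}{\p t})$ to the limit, so that $\rho$ realizes \eqref{criterion}.

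I expect the transfer of the strict sign in \eqref{criterion} to the limiting ray, together with the uniform $C^{1,1}$ regularity needed to keep $\rho$ an honest geodesic (which would force the extremizing sequence into one of the spaces $\cH_C$ of Definition~\ref{spaceHC}), to be the main obstacle; it is the direct analogue of the open step in Donaldson's geodesic-stability picture for the $K$-energy, and is why the statement remains a conjecture. A realistic conditional target is the equivalence under the extra hypothesis that the gradient flow \eqref{jflow} of $\tilde{\mathfrak J}^\b$ is globally defined and, after reparametrization, subconverges to a $C^{1,1}$ geodesic ray: granting this, the monotonicity $\tfrac{d}{dt}\tilde{\mathfrak J}^\b=-\tilde E^\b\le0$ along \eqref{jflow} and the functional inequality of \S4 would close the two nontrivial implications, while \propref{conjflow} already settles the complementary regime \eqref{jflowcondition1}, in which a $\tilde{\mathfrak J}^\b$-metric exists outright.
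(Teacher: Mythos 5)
You should note first that the paper itself does not prove this statement: it is placed in the ``Conjecture/Question'' environment, proposed explicitly as an analogue of Donaldson's geodesic-stability conjecture, and nothing in the paper claims to settle any of the three implications. So there is no proof of record to compare yours against, and your closing concession that the statement ``remains a conjecture'' is the correct assessment of your own attempt as well: what you have written is a plan, not a proof.

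Beyond that, one of the two implications you call nontrivial rests on an argument that would fail, not merely one that is incomplete. You correctly observe that the integrand of \eqref{criterion} is the first variation $d\tilde{\mathfrak J}^\b(\p\vphi/\p t)$, so condition $(2)$ asks for a ray along which $\tilde{\mathfrak J}^\b$ is \emph{strictly increasing} for all $t$. But for $\chi$ negative semi-definite the functional is \emph{convex} along $C^{1,1}$ geodesics by \eqref{2nd J functional}, and a $\tilde{\mathfrak{J}}^\b$-metric is a global minimiser by \thmref{c1 nega j low bdd}; a minimiser is entirely compatible with rays that increase strictly forever. Concretely, let $\vphi_*$ be the minimiser and $\rho(t)$ a $C^{1,1}$ geodesic ray from $\vphi_*$ that actually moves the metric (the parallel-ray lemma of \S3 produces one from every basepoint as soon as $\mathcal H_C$ contains a single ray); for $\chi<0$, as in the application $\chi=Ric(\om_0)$, strict convexity forces $(\tilde{\mathfrak J}^\b)'(t_1)>0$ at some $t_1$, and then the restarted ray $t\mapsto\rho(t_1+t)$ satisfies \eqref{criterion} for all $t\geq 0$ while $(1)$ fails. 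Hence $(2)\Rightarrow(1)$ cannot be obtained by the comparison argument you sketch, and indeed with the sign as printed the implication itself collapses whenever any non-trivial ray exists; the Donaldson-parallel statement requires the opposite inequality in \eqref{criterion}, i.e.\ a ray whose derivative stays $\leq 0$ (a semi-destabilising ray), which is exactly the orientation in which your appeal to \thmref{geodesicstability} (existence implies semi-stability) becomes the relevant half. Your diagnosis of this as a missing ``coercivity input'' rather than an orientation inconsistency is the genuine gap in your write-up. By contrast, your scheme for $(1)\Rightarrow(3)$ --- minimising sequence, connecting geodesics of \thmref{geo seg}, limit ray via nonpositive curvature --- is the standard Chen-type outline, and your identification of the uniform $C^{1,1}$ control and the non-triviality of the limit ray as the open obstruction is accurate; that is indeed why the statement remains open in the paper.
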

We need some definitions.
\begin{defn}\label{geosta}
A K\"ahler class is called
\begin{itemize}
  \item \emph{geodesic semi-stable} at a point $\vphi_0$ if every non-trivial $C^{1,1}$ geodesic ray starting from $\vphi_0$ is semi-stable.
  \item \emph{geodesic semi-stable} if every non-trivial $C^{1,1}$ geodesic ray is semi-stable.
  \item \emph{weak geodesic semi-stable} if every non-trivial geodesic ray with uniform $C^{1,1}$ bound is semi-stable.
\end{itemize}
\end{defn}
We say a $C^{1,1}$ geodesic ray is trivial if it is just a point.

\begin{prop}\label{c1 nega geo conv}
Suppose that $\chi$ is negative semi-definite.
We assume that there is a $C^{1,1}$ geodesic ray $\rho(t)$ staying in $\mathcal H_C$ and the $\tilde{\mathfrak{J}}^\b$-functional is non-increasing along $\rho(t)$. If there is a $\tilde{\mathfrak{J}}^\b$-metric, then $\rho(t)$ converges to the $\tilde{\mathfrak{J}}^\b$-metric.
\end{prop}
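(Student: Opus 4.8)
The plan is to exploit the convexity of $\tilde{\mathfrak J}^\b$ along $\rho(t)$, combine it with the monotonicity hypothesis and the lower bound of \thmref{c1 nega j low bdd}, and then extract a limit inside $\mathcal H_C$. First I would observe that along a geodesic the geodesic equation $\ddot\rho=|\p\dot\rho|^2_{\om_{\rho(t)}}$ annihilates the first term of the second variation \eqref{2nd J functional}, leaving
$$\frac{d^2}{dt^2}\tilde{\mathfrak J}^\b(\rho(t))=-\frac1V\int_M\chi_{i\bar j}\dot\rho^i\dot\rho^{\bar j}\om^n_{\rho(t)}\geq 0,$$
since $\chi$ is negative semi-definite. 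Hence $h(t):=\tilde{\mathfrak J}^\b(\rho(t))$ is convex. By hypothesis $h$ is non-increasing, and by \thmref{c1 nega j low bdd} it is bounded below; a convex, non-increasing, bounded-below function on $[0,\infty)$ necessarily has non-decreasing derivative increasing to $0$. Thus $h(t)\searrow L$ for some finite $L$ and $\mathfrak F^\b(\rho)=\lim_{t\to\infty}h'(t)=0$, so that $\rho$ is simultaneously semi-stable and semi-destabilising.

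Next I would pin down the limiting value and prepare the convergence. Writing $\psi$ for a $\tilde{\mathfrak J}^\b$-metric and joining $\rho(t)$ to $\psi$ by a $C^{1,1}$ geodesic (\thmref{geo seg}), the convexity of $\tilde{\mathfrak J}^\b$ together with the vanishing of its slope at the critical point $\psi$ gives $\tilde{\mathfrak J}^\b(\rho(t))\geq\tilde{\mathfrak J}^\b(\psi)$ for every $t$, so $L\geq\tilde{\mathfrak J}^\b(\psi)$, the common critical value supplied by \thmref{c1 nega j low bdd}. To obtain the reverse inequality and the actual convergence I would integrate the second derivative: since $\int_0^\infty h''(t)\,dt=-h'(0)<\infty$, the quantity $\frac1V\int_M(-\chi_{i\bar j})\dot\rho^i\dot\rho^{\bar j}\om^n_{\rho(t)}$ is integrable in $t$, and therefore tends to $0$ along some sequence $t_k\to\infty$; in other words the velocity of $\rho$ becomes asymptotically null for $\chi$. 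Because $\rho(t_k)\in\mathcal H_C$ enjoys uniform $C^{1,1}$ bounds (\thmref{geo seg}), I can pass to a subsequential limit $\rho_\infty$, and aim to show that $\tilde E^\b(\rho(t_k))\to 0$, so that $\rho_\infty$ solves $\tilde H=0$, i.e.\ is a $\tilde{\mathfrak J}^\b$-metric. Since all $\tilde{\mathfrak J}^\b$-metrics share one critical value, this forces $L=\tilde{\mathfrak J}^\b(\psi)$ and identifies the limit of $\rho(t)$ with a $\tilde{\mathfrak J}^\b$-metric.

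The hard part will be the last step: converting the asymptotic vanishing of the $\chi$-energy of the velocity, $\frac1V\int_M(-\chi_{i\bar j})\dot\rho^i\dot\rho^{\bar j}\om^n_{\rho(t)}\to 0$, into the vanishing of the energy $\tilde E^\b(\rho(t))=\frac1V\int_M\tilde H^2\om^n_{\rho(t)}$ itself. This is precisely where the mere semi-definiteness of $\chi$ (as opposed to the strict negativity used in \propref{uniqueness}) is delicate, since the kernel directions of $\chi$ are invisible to $h''$ and the extra $\b$-term hidden in $\tilde H$ must be controlled separately. I expect to need the uniform geometry of $\mathcal H_C$ together with the differential identities for $\tilde E^\b$ from \lemref{d E} (and, if necessary, the gradient-flow estimates for \eqref{jflow}) in order to dominate $\tilde H$ by the $\chi$-energy of $\dot\rho$ and the $\b$-contribution, after which the compactness in $\mathcal H_C$ upgrades the subsequential limit into genuine convergence of $\rho(t)$ to a $\tilde{\mathfrak J}^\b$-metric.
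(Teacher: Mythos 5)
Your opening matches the paper's ingredients (convexity of $\tilde{\mathfrak J}^\b$ along geodesics from \eqref{2nd J functional}, the lower bound from \thmref{c1 nega j low bdd}, hence $h(t)=\tilde{\mathfrak J}^\b(\rho(t))$ decreases to a finite limit with $h'(t)\to 0$), but the argument has a genuine gap exactly where you flag it, and the route you sketch for closing it does not work. Integrability of $h''(t)=-\frac1V\int_M\chi_{i\bar j}\dot\rho^i\dot\rho^{\bar j}\om^n_{\rho(t)}$ only gives subsequential vanishing of the $\chi$-energy of the \emph{single} direction $\dot\rho$, and $h'(t)\to0$ only gives vanishing of the pairing of $\tilde H$ against that same single direction. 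Neither quantity controls $\tilde E^\b(\rho(t))=\frac1V\int_M\tilde H^2\om^n_{\rho(t)}$: the function $\tilde H$ is invisible both in the kernel directions of the merely semi-definite $\chi$ and in directions not proportional to $\dot\rho$, and the $\b$-term inside $\tilde H$ is not touched at all. There is no identity in the paper (nor an evident one) dominating $\tilde E^\b$ by $h''$ plus controlled terms, so the claim $\tilde E^\b(\rho(t_k))\to 0$ is unsupported and the identification of the subsequential limit as a $\tilde{\mathfrak J}^\b$-metric falls through.

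The paper avoids this issue by arguing transversally to $\rho$ rather than along it: it joins the given critical metric $\vphi_0$ to $\rho(t)$ by $C^{1,1}$ geodesic segments $\gamma_t(s)$ (possible because $\rho(t)\in\mathcal H_C$, \thmref{geo seg}), passes to a limiting geodesic ray $\gamma(s)$ emanating from $\vphi_0$, and notes that along $\gamma$ the functional is convex, starts at its global minimum value (by \thmref{c1 nega j low bdd}), and is bounded above because $\tilde{\mathfrak J}^\b$ is non-increasing along $\rho$ and the endpoints of the $\gamma_t$ lie on $\rho$. A convex, bounded function on $[0,\infty)$ starting at its minimum must be constant, so every $\gamma(s)$ is a global minimiser and hence a $\tilde{\mathfrak J}^\b$-metric; this is how the paper produces the limiting critical metric without ever needing $\tilde E^\b\to 0$. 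To repair your proof you would need to replace the $\tilde E^\b$ step by an argument of this type (or supply a genuinely new estimate bounding $\tilde H$ in $L^2$ by data along the ray, which the present setup does not provide).
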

\begin{proof}
Let $\vphi_0$ be a $\tilde{\mathfrak{J}}^\b$-metric. We first connect $\vphi_0$ and $\rho(t)$ by a $C^{1,1}$ geodesic segment $\gamma_t(s)$, this follows from \thmref{geo seg} since $\rho(t)\in\mathcal H_C$. Moreover, since the $C^{1,1}$ norm is uniform, after taking limit on $t$, we obtain a $C^{1,1}$ geodesic ray $\gamma(s)$ starting at $\vphi_0$. Thus, $\tilde{\mathfrak{J}}^\b$ strongly converges and is well-defined along $\gamma(s)$.

Since the $\tilde{\mathfrak{J}}^\b$ is non-increasing along $\rho(t)$, so $\tilde{\mathfrak{J}}^\b$ has upper bound along $\gamma(s)$. While, \thmref{c1 nega j low bdd} implies that when $\Om$ has a $\tilde{\mathfrak{J}}^\b$-metric, then $\tilde{\mathfrak{J}}^\b$ has lower bound.

Meanwhile, when $\chi$ is negative semi-definite, from \lemref{2nd J functional}, $\tilde{\mathfrak{J}}^\b$ is convex along the geodesic ray $\gamma(s)$. Moreover, $\tilde{\mathfrak{J}}^\b$ obtains its lower bound at $s=0$. So, we claim that $\tilde{\mathfrak{J}}^\b(s)\equiv\min \tilde{\mathfrak{J}}^\b$ along $\gamma(s)$. I.e. $\gamma(s)$ are constituted of $\tilde{\mathfrak{J}}^\b$-metrics.

We prove this claim by the contradiction method.
Since along $\gamma(s)$, the first derivative $(\tilde{\mathfrak{J}}^\b)'$ is non-negative, we assume that $s_0$ is the first finite time such that $(\tilde{\mathfrak{J}}^\b)'$ is strictly positive, otherwise, the claim is proved. Since along $\gamma(s)$, $(\tilde{\mathfrak{J}}^\b)''$ is also non-negative, so $(\tilde{\mathfrak{J}}^\b)'$ is strictly positive for any $s\geq s_0$. This is a contradiction to $\lim_{s\rightarrow\infty}(\tilde{\mathfrak{J}}^\b)'(s)=0$ which follows from that $\tilde{\mathfrak{J}}^\b$ is bounded and monotonic.
\end{proof}
\begin{rem}
When $\chi$ is strictly negative, using \lemref{2nd J functional} again, we see that $\frac{1}{V}\int_M \chi_{i\bar j}\dot\gamma^i\dot\gamma^{\bar j}\om_\gamma^n=0$. This implies $\gamma(s)$ is just a point which coincides with $\vphi_0$.  Therefore $\rho(t)$ will converges to $\vphi_0$.
\end{rem}

\begin{rem}
If a $C^{1,1}$ geodesic ray $\gamma(t)$ is destabilizing, then  the ${\tilde{\mathfrak{J}}}^\b$-functional is non-increasing when $t$ is large enough.
\end{rem}
%%%%%%%%%%%%%%%%%%%%%%%%%%%%%%%%%%%%%%%%%%%%%%%%%%%%

%%%%%%%%%%%%%%%%%%%%%%%%%%%%%%%%%%%%%%%%%%%%%%%%%%%%%
\section{Proof of \thmref{geodesicstability}}
\begin{proof}
Due to \thmref{c1 nega j low bdd}, $\vphi_0$ is a global minimiser. So $\tilde{\mathfrak{J}}^\b$ is non-decreasing along any $C^{1,1}$ geodesic ray $\rho(t)$. So the first statement holds.
For the second statement, we consider the sign of $\mathfrak{F}^\b$ and prove by contradiction method. Assume that $\rho(t)$ is a geodesic ray with uniform $C^{1,1}$ bund and $\mathfrak{F}^\b$ is strictly negative along it. So according to the definition of $\mathfrak{F}^\b$ \eqref{euro}, when $t$ is large enough,
$$d\tilde{\mathfrak{J}}^\b(\frac{\p\rho}{\p t})_{\rho(t)}<0.$$ According to \propref{c1 nega geo conv}, $\rho(t)$ will converges to a $\tilde{\mathfrak{J}}^\b$-metric and $\mathfrak{F}^\b=0$. Contradiction! So the theorem follows.
\end{proof}
%%%%%%%%%%%%%%%%%%%%%%%%%%%%%%%%%%%%%%%%%%%%%%%%%%%%%%%%
\section{Proof of \thmref{proper}}
Recall the entropy $$E_\om(\vphi)=\frac{1}{V}\int_M\log\frac{\om_\vphi^n}{\om^n}\om_\vphi^n.$$
The proof of \thmref{proper} follows from the following lemma and \eqref{decomposek}.

\begin{lem}(Tian \cite{MR1787650})\label{tianinequality}
There is a uniform constant $C=C(\om)>0$,
\begin{align}\label{lowered}
E_\om(\vphi)\geq \a
I_{\om}(\varphi) -C, \forall \vphi\in\mathcal H.
\end{align}
\end{lem}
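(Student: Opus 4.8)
The plan is to read $E_\om(\vphi)$ as the relative entropy of the probability measure $\om_\vphi^n/V$ with respect to $\om^n/V$ and to exploit its dual (Legendre--Gibbs) variational description. Jensen's inequality alone only gives $E_\om\geq 0$; the point of the dual formulation is that it produces a lower bound that is \emph{linear} in $\vphi$, and the coercivity in $I_\om$ is then extracted from a single application of the Moser--Trudinger / H\"ormander--Tian estimate. First I would record the elementary dual inequality: for every $g\in C^\infty(M)$,
\begin{align*}
E_\om(\vphi)\geq \frac1V\int_M g\,\om_\vphi^n-\log\Bl\frac1V\int_M e^{g}\,\om^n\Br.
\end{align*}
This follows in one line from Jensen's inequality applied to the concave function $\log$ against the probability measure $\om_\vphi^n/V$, after writing $\om^n=(\om^n/\om_\vphi^n)\,\om_\vphi^n$ inside the second integral and recognising the leftover term $\frac1V\int_M\log(\om_\vphi^n/\om^n)\,\om_\vphi^n$ as $E_\om(\vphi)$.

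Next I would choose the test function $g=-\a\vphi$ for a constant $\a>0$ to be fixed. The first term becomes $-\frac{\a}{V}\int_M\vphi\,\om_\vphi^n$, and in the second I would normalise by the supremum,
\begin{align*}
\log\Bl\frac1V\int_M e^{-\a\vphi}\om^n\Br=-\a\sup_M\vphi+\log\Bl\frac1V\int_M e^{-\a(\vphi-\sup_M\vphi)}\om^n\Br.
\end{align*}
The Moser--Trudinger / $\a$-invariant inequality for the class $\Om$ supplies a uniform constant $C_\a$ with $\frac1V\int_M e^{-\a(\vphi-\sup_M\vphi)}\om^n\leq C_\a$ for all $\vphi\in\mathcal{H}_\Om$, provided $\a$ is smaller than the $\a$-invariant of $(M,\Om)$. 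Combining the two displays, and then using $\sup_M\vphi\geq\frac1V\int_M\vphi\,\om^n$ together with $\a>0$, I obtain
\begin{align*}
E_\om(\vphi)\geq -\frac{\a}{V}\int_M\vphi\,\om_\vphi^n+\a\sup_M\vphi-\log C_\a\geq \a\Bl\frac1V\int_M\vphi\,\om^n-\frac1V\int_M\vphi\,\om_\vphi^n\Br-\log C_\a.
\end{align*}
The bracket is exactly $I_\om(\vphi)$, so the lemma holds with $C=\log C_\a$.

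The main obstacle is the Moser--Trudinger / H\"ormander--Tian estimate itself: the uniform exponential integrability of $\sup$-normalised $\om$-plurisubharmonic potentials. This is the only non-formal ingredient — the dual entropy inequality and the reconstruction of $I_\om$ are purely algebraic — and it is also what pins down the admissible range of $\a$ (any value below the positive $\a$-invariant of the class), thereby guaranteeing $\a>0$. Since this estimate is a local pluripotential-theoretic fact valid for an arbitrary K\"ahler class, no positivity hypothesis on $C_1(M)$ is needed here; alternatively, one may simply cite Tian \cite{MR1787650}, where this inequality is established.
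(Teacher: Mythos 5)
Your proposal is correct and is essentially the paper's own argument: the ``dual entropy inequality'' you state is exactly the Jensen step the paper performs after rewriting $\int e^{-\a(\vphi-\frac1V\int\vphi\om^n)}\om^n$ as an integral against $\om_\vphi^n$, and both proofs then invoke Tian's $\a$-invariant (uniform exponential integrability of $\sup$-normalised potentials) together with $\sup_M\vphi\geq\frac1V\int_M\vphi\,\om^n$ to reconstruct $I_\om(\vphi)$. The only difference is presentational — you isolate the Gibbs variational principle as a separate step before choosing $g=-\a\vphi$, whereas the paper substitutes directly.
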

\begin{proof}
The $\a$-invariant was introduced by Tian \cite{MR894378}:
\begin{align*}
\a([\om])=\sup\{\a>0\vert\exists C>0, & \text{ s.t. }
\int_M e^{-\a(\vphi-sup_M\vphi)}\om^n\leq C \\
&\text{ holds for all }\vphi\in \mathcal H\}>0.
\end{align*}
From the definition of the $\a$-invariant
\begin{align*}
\int_M e^{-\a(\vphi-\frac{1}{V}\int_M\vphi\om^n)-h}\om_\vphi^n
&=\int_M e^{-\a(\vphi-\frac{1}{V}\int_M\vphi\om^n)}\om^n\\
&\leq\int_M e^{-\a(\vphi-sup_M\vphi)}\om^n
\end{align*}
and then the Jensen inequality
\begin{align*}
\int_M [\a(-\vphi+\frac{1}{V}\int_M\vphi\om^n)-\log\frac{\om_\vphi^n}{\om^n}]\om_\vphi^n
\leq C,
\end{align*}
we obtain the lower bound of the entropy.
\end{proof}

\begin{lem}\label{Ipropertokproper}
$I$-properness of Ding functional implies $I$-properness of Mabuchi $K$-energy.
\end{lem}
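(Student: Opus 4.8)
The goal is to show that if Ding functional satisfies $F_\om(\vphi)\geq A I_\om(\vphi)-B$ for some positive constants $A,B$, then Mabuchi's $K$-energy satisfies the same type of inequality $\nu_\om(\vphi)\geq A' I_\om(\vphi)-B'$. The plan is to exploit the known identity relating the two functionals. In the Fano setting ($\Om=C_1(M)>0$), Ding-Tian \cite{dingtianf} established an explicit relation between $\nu_\om$ and $F_\om$; the essential point is that their difference is controlled. First I would recall that identity in the precise form $\nu_\om(\vphi)=F_\om(\vphi)+(\text{extra entropy-type term})$, where the extra term is manifestly nonnegative.

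More concretely, the Ding-Tian formula expresses $\nu_\om(\vphi)$ as $F_\om(\vphi)$ plus a relative-entropy term of the shape $\frac{1}{V}\int_M \log\frac{\om_\vphi^n}{e^{h_\om-\vphi}\om^n}\,\om_\vphi^n$ (up to normalization conventions), where $h_\om$ is the Ricci potential. The key step is to observe that this correction term is of the form $\int_M \log(f)\,f\,d\mu$ for an appropriate probability density $f=\frac{\om_\vphi^n}{e^{h_\om-\vphi}\om^n}$ against the measure $d\mu=e^{h_\om-\vphi}\om^n/V$. By Jensen's inequality (equivalently, the nonnegativity of relative entropy / the fact that $x\log x\geq x-1$), this term is nonnegative. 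Hence $\nu_\om(\vphi)\geq F_\om(\vphi)$ pointwise in $\vphi$.

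Combining the two ingredients then gives the result directly: using the assumed $I$-properness of $F_\om$,
\begin{align*}
\nu_\om(\vphi)\geq F_\om(\vphi)\geq A\,I_\om(\vphi)-B,
\end{align*}
which is precisely \eqref{Iproper} with the same constants $A,B$. Thus the $I$-properness of $F_\om$ transfers verbatim to $\nu_\om$.

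The main obstacle is purely bookkeeping: establishing the Ding-Tian identity with the correct sign and normalization, since the literature uses several conventions for $F_\om$, for the Ricci potential $h_\om$, and for the volume normalization $V$. I would therefore state the identity carefully (citing \cite{dingtianf}) and verify that the correction term indeed appears with a sign making it nonnegative after applying Jensen. Apart from this, no analytic difficulty arises; the argument is a one-line consequence of the identity plus convexity of $x\mapsto x\log x$. The only genuine hypothesis used beyond the identity is that we are in the Fano case where the Ding functional is defined, which matches the setting $\Om=C_1(M)>0$ of Phong-Song-Sturm-Weinkove \cite{MR2427008}.
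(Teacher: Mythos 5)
Your proposal is correct and follows essentially the same route as the paper: both invoke the Ding--Tian identity relating $\nu_\om$ and $F_\om$ on the normalized space of potentials and then use Jensen's inequality (nonnegativity of the relative-entropy correction term, which is the paper's $-A(\vphi)\geq 0$) to conclude $\nu_\om\geq F_\om$ up to an additive constant, whence the $I$-properness transfers. The only caveat is that the transferred lower bound may pick up a harmless additive constant (the paper's $A_5$ versus $A_4$) from the value of the correction term at $\vphi=0$, so "with the same constants $A,B$" should be read as "with the same slope $A$ and a possibly different $B$"; this is exactly the normalization bookkeeping you already flag.
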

\begin{proof}
From assumption, in $\Om=C_1(M)$, there are two positive constants $A_3$ and $A_4$ such that for all $\vphi\in \mathcal H_\Om$,
\begin{align}\label{Fproper}
F_\om(\vphi) \geq A_3 I_\om(\vphi)-A_4.
\end{align}
Let $f$ be the scalar potential which is defined to be the solution of the equation
\begin{equation*}
\tri_\vphi f=S-\ul{S}
\end{equation*}
with the normalisation condition
\begin{equation*}
\int_{M}e^{f}\om^n_{\vphi}=V.
\end{equation*}
Ding-Tian \cite{dingtianf} introduced the following energy functional
\begin{align*}
A(\vphi)=\frac{1}{V}\int_Mf\om_\vphi^n.
\end{align*}
Let $\mathcal H_0$ be the space of K\"ahler potential $\vphi$ under the normalization condition
\begin{align*}
\int_Me^{-\vphi+h_\om}\om^n=V.
\end{align*}
In $\mathcal H_0$, the relation between Mabuchi $K$-energy and Ding $F$-functional is
\begin{align*}
F_\om(\vphi)=\nu_\om(\vphi)+A(\vphi)-A(0).
\end{align*}
Applying the Jensen inequality to the normalization condition of $f$, we have $A(\vphi)\leq 0$. Thus the $I$-properness of Mabuchi $K$-energy is achieved by another positive constant $A_5$ from \eqref{Fproper},
\begin{align*}
\nu_\om(\vphi) \geq A_3 I_\om(\vphi)-A_5.
\end{align*}
\end{proof}%%%%%%%%%%%%%%%%%%%%%%%%%%%%%%%%%%%%%%%%%%%%%%%%%%%%%%%%
\section{Proof of \thmref{constgeodesicray}}
We construct the required geodesic ray by using the $\tilde{\mathfrak J}^\b$-flow.
\begin{prop}
Assume that the $\tilde{\mathfrak J}^\b$-flow converges to a $\tilde{\mathfrak J}^\b$-metric. From any K\"ahler potential $\psi$, there exists a semi-destabilising $C^{1,1}$-geodesic ray such that
\begin{enumerate}
\item $\tilde{\mathfrak J}^\b$ is bounded from below,
\item the infimum of the energy $\tilde E^\b$ is zero.
\end{enumerate}
\end{prop}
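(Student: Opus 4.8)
The plan is to produce the required ray explicitly as the trivial (constant) geodesic ray sitting at the limit of the flow, so that the proposition reduces to unpacking its three defining properties once the analytic input, namely convergence of the flow, has been granted.

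First I would fix an arbitrary $\psi\in\cH_\Om$ and solve the $\tilde{\mathfrak J}^\b$-flow \eqref{jflow} with initial datum $\vphi(0)=\psi$. By the standing hypothesis this flow converges as $t\to\infty$ to a limit $\vphi_\infty\in\cH_\Om$ that is a $\tilde{\mathfrak J}^\b$-metric, i.e. $\vphi_\infty$ solves \eqref{lma}, equivalently $\tilde H\equiv 0$ at $\vphi_\infty$. The only role of $\psi$ here is to supply, starting from an arbitrary potential, one genuine $\tilde{\mathfrak J}^\b$-metric in $\Om$; the ray itself will be placed at $\vphi_\infty$, exactly as in the trivial-ray remark accompanying \thmref{proper}. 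I then set $\rho(t)\equiv\vphi_\infty$ for all $t\geq 0$, which is a smooth, hence $C^{1,1}$, geodesic ray and thus an admissible competitor.

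It remains to check the three properties. Since $\p\rho/\p t\equiv 0$, the integrand in \eqref{euro} vanishes identically, so $\mathfrak{F}^\b(\rho)=0\leq 0$ and $\rho$ is semi-destabilising. For $(1)$, $\tilde{\mathfrak J}^\b(\rho(t))=\tilde{\mathfrak J}^\b(\vphi_\infty)$ is constant in $t$ and therefore bounded below; this is consistent with \thmref{c1 nega j low bdd}, which already guarantees a global lower bound for $\tilde{\mathfrak J}^\b$ as soon as a $\tilde{\mathfrak J}^\b$-metric exists. For $(2)$, the vanishing $\tilde H\equiv 0$ at $\vphi_\infty$ forces the energy \eqref{tildeE} to satisfy $\tilde E^\b(\vphi_\infty)=\frac{1}{V}\int_M\tilde H^2\om_{\vphi_\infty}^n=0$, so $\tilde E^\b(\rho(t))\equiv 0$ along the ray and its infimum is zero; in particular the ray is also effective. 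This is the degenerate situation already anticipated for \thmref{lower bound of E}.

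I expect no genuine obstacle inside this proposition: everything reduces to a sign check on $\mathfrak{F}^\b$ and to the identity that $\tilde E^\b$ vanishes precisely when $\tilde H$ does. The entire substantive content is hidden in the hypothesis, the convergence of the negative gradient flow of $\tilde{\mathfrak J}^\b$, which is the hard analytic statement proved separately in \propref{conjflow} (under \eqref{jflowcondition1}) through the second-order estimate and the maximum principle for the operator $L$. Thus the proposition functions as the bridge that converts flow convergence into the existence of a semi-destabilising ray satisfying the hypotheses of \thmref{proper}; the only care needed is to confirm that the constant ray legitimately qualifies and that the sign of $\mathfrak{F}^\b$ comes out non-positive rather than positive.
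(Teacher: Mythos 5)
Your verification that the trivial ray sitting at the flow limit $\vphi_\infty$ is semi-destabilising, effective, has $\tilde{\mathfrak J}^\b$ bounded below and $\tilde E^\b\equiv 0$ is correct, and it does produce a ray satisfying the hypotheses of \thmref{proper} --- this is exactly the observation the paper itself makes in the paragraph following \thmref{proper}. The gap is that this does not prove the proposition as stated: the clause ``from any K\"ahler potential $\psi$'' is meant to produce a ray \emph{emanating from} $\psi$, not merely to use $\psi$ as an initial datum for the flow. Otherwise the quantification over $\psi$ is vacuous, and the statement would not match its role in \thmref{constgeodesicray} (``from any K\"ahler potential $\vphi\in\cH_\Om$, there exists\dots'') nor item (3) of Conjecture \ref{conj}, where the ray is required to start at an arbitrarily prescribed point. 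Your construction only ever yields the single constant ray at $\vphi_\infty$, so all dependence on $\psi$ disappears.

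The paper's proof is genuinely different and contains the content you have skipped: it connects $\psi$ to the flow trajectory $\vphi(t)$ by $C^{1,1}$ geodesic segments $\vphi_t(s)$ (using \thmref{geo seg}) and sets $\rho(s)=\lim_{t\to\infty}\vphi_t(s)$, a ray starting at $\psi$. Properties (1) and (2) are then inherited because the far endpoints of the segments lie on the flow line, while the semi-destabilising property requires an actual estimate: one splits $d\tilde{\mathfrak J}^\b(\frac{\p\rho}{\p s})$ into $d\tilde{\mathfrak J}^\b(\frac{\p\rho}{\p s}-\frac{\p\vphi}{\p t})+d\tilde{\mathfrak J}^\b(\frac{\p\vphi}{\p t})$, uses that the second term is $\leq 0$ along the negative gradient flow, and controls the first by Cauchy--Schwarz together with the comparison-triangle angle bound \eqref{lowerbound tri} and the decay $\tilde E^\b(\vphi(t))\to 0$. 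None of this appears in your argument, and it is precisely the step that makes the ray from $\psi$ (rather than the trivial ray at $\vphi_\infty$) admissible. If your goal is only the $I$-properness conclusion of \thmref{constgeodesicray}, your shortcut suffices; as a proof of this proposition it proves a strictly weaker statement.
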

\begin{proof}
We connect $\psi$ to the $\tilde{\mathfrak J}^\b$-flow $\vphi(t)$ with the $C^{1,1}$-geodesic $\vphi_t(s)$. Then we define $\rho(s)=\lim_{t\rightarrow \infty}\vphi_t(s)$. Since the $\tilde{\mathfrak J}^\b$-flow $\vphi(t)$ satisfies two conclusions in this proposition and the end-points of each $\rho_t(s)$ are all in $\vphi(t)$, so $\rho(s)$ also satisfies these two conclusion automatically. The semi-destabilising is proved as following.
\begin{align*}
\mathfrak{F}^\b(\rho)&=\lim_{s\rightarrow \infty}\delta\tilde {\mathfrak J}^\b(\frac{\p\rho}{\p s})_{\rho(s)}\\
&\leq \lim_{s\rightarrow \infty} \lim_{t\rightarrow \infty} d\tilde {\mathfrak J}^\b(\frac{\p\rho}{\p s}-\frac{\p\vphi}{\p t})_{\rho_t(s)}+d\tilde {\mathfrak J}^\b(\frac{\p\vphi}{\p t})_{\rho_t(s)}\\
&=\lim_{s\rightarrow \infty} \lim_{t\rightarrow \infty} d\tilde {\mathfrak J}^\b(\frac{\p\rho}{\p s}-\frac{\p\vphi}{\p t})_{\vphi(t)}+d\tilde {\mathfrak J}^\b(\frac{\p\vphi}{\p t})_{\vphi(t)}\\
&\leq\lim_{s\rightarrow \infty} \lim_{t\rightarrow \infty} d\tilde {\mathfrak J}^\b(\frac{\p\rho}{\p s}-\frac{\p\vphi}{\p t})_{\vphi(t)}.
\end{align*}
From \eqref{lowerbound tri}, we further have the right hand side is bounded by
\begin{align*}
&\leq\lim_{s\rightarrow \infty} \lim_{t\rightarrow \infty} \sqrt{\tilde E^\b(\vphi(t))}\sqrt{2-2(\frac{\p\rho}{\p s},\frac{\p\vphi}{\p t})_{\vphi(t)}}\\
&\leq\lim_{s\rightarrow \infty} \lim_{t\rightarrow \infty} \sqrt{\tilde E^\b(\vphi(t))}\frac{C\cdot d(\vphi_0,\rho(0))}{t}\\
&=0.
\end{align*}
Thus, the proposition holds.
\end{proof}

Now we prove the convergence of the negative gradient flow $\tilde {\mathfrak J}^\b$-functional.
Assume that there is a $\om\in\Om$ such that
\begin{align}\label{jflowcondition}
(-nc_\b\cdot\om+(n-1)\chi)\wedge\om^{n-2}>0.
\end{align}
and
\begin{align}\label{convex}
-\chi>0.
\end{align}
%\begin{align}\label{convex}
%-\chi+\frac{\b}{V}\om>0.
%\end{align}
\begin{prop}\label{conjflow}
The conditions \eqref{jflowcondition} and \eqref{convex} is equivalent to convergence of the $\tilde{\mathfrak J}^\b$-flow to a $\tilde{\mathfrak J}^\b$-metric.
\end{prop}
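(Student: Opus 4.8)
The plan is to read this as an equivalence between the analytic statement ``the parabolic flow \eqref{jflow} converges to a $\tilde{\mathfrak J}^\b$-metric'' and the two positivity hypotheses, following Song--Weinkove's treatment of the $\mathfrak J$-flow and supplying the extra estimates forced by the $\b$-term. First I record that \eqref{jflow} reads $\frac{\p\vphi}{\p t}=\tilde H$, so that the functional decreases, $\frac{d}{dt}\tilde{\mathfrak J}^\b=-\frac1V\int_M\tilde H^2\om_\vphi^n\le0$, and that the linearisation of the flow is the operator $Lu=-u^{i\bar j}\chi_{i\bar j}+\frac{\b}{V}(\tri_\vphi u)\frac{\om^n}{\om^n_\vphi}$ from \eqref{Loperator}. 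In a frame diagonalising $\om_\vphi$ and $\chi$ simultaneously the coefficients of $L$ are $-\chi_i+\frac{\b}{V}\frac{\om^n}{\om^n_\vphi}$, which are positive precisely because of \eqref{convex} together with $\b\ge0$. Thus \eqref{convex} is exactly the condition rendering the flow (uniformly, once $\om_\vphi$ is pinched) parabolic; this gives short-time existence and already accounts for the necessity of \eqref{convex}.

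For the substantial direction, that \eqref{jflowcondition} and \eqref{convex} imply convergence, I would run the standard parabolic estimate scheme, whose goal is to pinch $\om_\vphi$ between two fixed metrics uniformly in $t$. Once $\om_\vphi$ is bounded above and below, Evans--Krylov and Schauder theory yield uniform $C^k$ bounds and long-time existence. The $C^0$ bound I would extract from the monotonicity above, combined with a maximum-principle bound on $\tilde H=\dot\vphi$ obtained by evolving $\tilde H$ under $\frac{\p}{\p t}-L$, after normalising $\vphi$ by additive constants.

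The crux, and the step I expect to be the main obstacle, is the second-order estimate. Here I would apply the maximum principle to a test quantity such as $\log\tr_\om\om_\vphi-A\vphi$ and compute $(\frac{\p}{\p t}-L)$ of it. For the pure $\mathfrak J$-flow the hypothesis \eqref{jflowcondition} is exactly what manufactures a favourable sign dominating the bad gradient terms. The genuinely new feature is that the $\b$-term produces derivatives of $\frac{\om^n}{\om^n_\vphi}$ absent in Song--Weinkove; controlling these is the reason for working with $L$ rather than the complex Laplacian, arriving at a differential inequality of the shape $(\frac{\p}{\p t}-L)(\log\tr_\om\om_\vphi-A\vphi)\le C$ in which \eqref{jflowcondition} absorbs the dangerous terms and a large choice of $A$ eliminates the remainder. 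This gives the upper bound on $\tr_\om\om_\vphi$, and the lower bound on $\om_\vphi$, equivalently a bound on $\om^n/\om^n_\vphi$, then follows from the upper bound and the $C^0$ control.

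With uniform bounds in hand, $\tilde{\mathfrak J}^\b$ is bounded along the flow, so $\int_0^\infty\int_M\tilde H^2\om_\vphi^n\,dt<\infty$ and $\tilde H\to0$ in $L^2$ along a time sequence; Arzel\`a--Ascoli then gives a $C^\infty$-convergent subsequence of $\om_{\vphi(t)}$ with limit satisfying $\tilde H\equiv0$, a $\tilde{\mathfrak J}^\b$-metric. Since \eqref{convex} makes $\chi$ strictly negative, \propref{uniqueness} yields uniqueness of the critical metric and \eqref{2nd J functional} gives strict convexity of $\tilde{\mathfrak J}^\b$ transverse to constants, which I would use to upgrade subsequential convergence to full convergence. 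For the converse, if the flow converges to a $\tilde{\mathfrak J}^\b$-metric $\om_\psi$, then $\tilde H=0$ there, i.e. $\tr_{\om_\psi}\chi=c_\b+\frac\b V\frac{\om^n}{\om^n_\psi}$; diagonalising $\chi$ against $\om_\psi$ and using $\chi<0$, whence $c_\b<0$, a direct linear-algebra check shows that $\om=\om_\psi$ satisfies \eqref{jflowcondition}, so both conditions are also necessary.
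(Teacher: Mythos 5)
Your overall framework (Song--Weinkove scheme, parabolicity of $L$ from \eqref{convex}, maximum principle on $\dot\vphi$, cone condition driving the second-order estimate, then Evans--Krylov and convergence via the decay of $\tilde E^\b$) matches the paper's intent, but two steps that you treat as routine are precisely where the content lies, and as written they do not close. First, your zero-order estimate is not viable: monotonicity of $\tilde{\mathfrak J}^\b$ along the flow does not control $\|\vphi\|_{C^0}$ (that would essentially be the properness one is trying to establish), and the maximum-principle bound $|\dot\vphi|\le C$ only gives $|\vphi|\le C(1+t)$. You also have the logical order of the estimates backwards: the paper first proves the lower bound on $\om_\vphi$ directly from \eqref{dotvphi} and the equation (since $\chi<0$ makes $\tr_{\om_\vphi}\chi$ control each eigenvalue from below), then proves the second-order upper bound in the oscillation-dependent form $\om_\vphi\le e^{C_1(\vphi-\inf\vphi)}$ of \eqref{2ndupper}, and only then obtains the $C^0$ bound by a Moser-type iteration (multiplying $\om_\vphi^n-\om_\vphi^{n-1}\wedge\om$ by $e^{-C_3\phi}$, integrating, and invoking an iteration lemma). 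Without that iteration your $C^0$ step is a gap, and your claim that the lower bound on $\om_\vphi$ ``follows from the upper bound and the $C^0$ control'' inverts the actual dependency.

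Second, the test function matters and yours is the wrong one. The paper applies $(\p_t-L)$ to $\log A-C\vphi$ with $A=\chi^{i\bar j}g_{\vphi i\bar j}$, i.e.\ the trace of $\om_\vphi$ against $\chi$, not $\tr_\om\om_\vphi$. The reason is that the bad third-order terms generated by $L$ and by the $\b$-term $(\om^n/\om^n_\vphi)_{i\bar j}$ are cancelled against $g_\vphi^{k\bar j}g_\vphi^{i\bar l}\chi_{i\bar j}A_kA_{\bar l}/A^2$ and $\frac{\b}{V}\frac{\om^n}{\om^n_\vphi}g_\vphi^{k\bar l}A_kA_{\bar l}/A^2$ by two Cauchy--Schwarz inequalities (one being Weinkove's Lemma 3.2 of \cite{MR2104082}) that are specific to the $\chi$-trace; with $\tr_\om\om_\vphi$ one instead picks up curvature terms of $\om$ contracted against $\chi^{-1}$ and the cancellation does not go through in the same form. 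Beyond the choice of quantity, you assert rather than derive the differential inequality; the handling of the new $\b$-terms, which you correctly identify as the main obstacle, is exactly the computation \eqref{ptLc}--\eqref{ptLcs} and cannot be waved through. Your converse direction (a solution of \eqref{lma} with $\chi<0$ automatically satisfies the cone condition by diagonalisation) is a correct observation, and is in fact more than the paper's written proof supplies for the stated ``equivalence.''
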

The shot tome existence from the fact that the linearisation operator $L$ is elliptic. In the following, we prove the a priori estimates. As long as we have the second order estimate and the zero estimate, the $C^{2,\a}$ estimate follows from the Evans-Krylov estimate. The higher order estimates is obtained by the bootstrap method.

Recall the $\tilde{\mathfrak J}^\b$-flow,
 \begin{align}\label{jflow}
\dot\vphi&=-c_\beta+\frac{n\chi\wedge\om_\vphi^{n-1}}{\om_\vphi^n}
 -\frac{\b}{V}\frac{\om^n}{\om^n_\vphi}.
 \end{align}
 We take derivative $\p_t$ on the both sides,
 \begin{align}\label{linearjflow}
\ddot\vphi
&=-\dot\vphi^{i\bar j}\chi_{i\bar j}+\frac{\b}{V}\tri_\vphi\dot\vphi \frac{\om^n}{\om^n_\vphi}\\
&=\dot\vphi_{i\bar j}[-g_\vphi^{k\bar j}g_\vphi^{i\bar l}\chi_{k\bar l}+\frac{\b}{V} \frac{\om^n}{\om^n_\vphi}g_\vphi^{ i\bar j}]\nonumber.
 \end{align}
 We denote
\begin{align}\label{Loperator}
L=[-g_\vphi^{k\bar j}g_\vphi^{i\bar l}\chi_{k\bar l}+\frac{\b}{V} \frac{\om^n}{\om^n_\vphi}g_\vphi^{ i\bar j}]\p_i\p_{\bar j}.
 \end{align}
From \eqref{convex}, we see that on the short time interval, $L$ is an elliptic operator, i.e.
\begin{align}\label{convex st}
-\chi+\frac{\b}{V}\frac{\om^n}{\om^n_\vphi} \om_\vphi>0.
\end{align} %Also, there is a positive constant $\eps$ such that \begin{align}\label{convex st trace} -\chi+\frac{\b}{nV}\frac{\om^n}{\om^n_\vphi}\om_\vphi\geq \eps\chi.\end{align}
From the maximum principle, we have
 \begin{align}\label{dotvphi}
 \min_M \dot\vphi(0)\leq \dot\vphi(t)\leq  \max_M \dot\vphi(0).
\end{align}

%%%%%%%%%%%%%%%%%%%%%%%%%%%%
\subsection{Lower bound of the 2nd derivatives}\label{2ndlower}
Using the flow equation, we have
 \begin{align*}
 \min_M \dot\vphi(0)\leq \dot\vphi(t)&=-c_\beta+\frac{n\chi\wedge\om_\vphi^{n-1}}{\om_\vphi^n}
 -\frac{\b}{V}\frac{\om^n}{\om^n_\vphi}\\
 &=-c_\beta+g_\vphi^{i\bar j}\chi_{i\bar j}
 -\frac{\b}{V}\frac{\om^n}{\om^n_\vphi}\\
 &\leq -c_\beta+g_\vphi^{i\bar j}\chi_{i\bar j}.
 \end{align*}

In the following, we always use the normal coordinate diagonalize $\om$ and $\om_\vphi$ such that their eigenvalues are $1$ and $\l_i$ for $1\leq i\leq n$ respectively. Denote the diagonal of $\chi$ by $\mu_i$.

Thus for any $1\leq i\leq n$,
 \begin{align*}
\frac{-\mu_i}{\l_i}\leq  \min_M \dot\vphi(0)-c_\beta,
 \end{align*}
 or
  \begin{align*}
\l_i\geq \frac{-\mu_i}{\min_M \dot\vphi(0)-c_\beta} .
 \end{align*}
%%%%%%%%%%%%%%%%%%%%%%
\subsection{Upper bound of the 2nd derivatives}
Let $$A=\chi^{i\bar j}g_{\vphi i\bar j}.$$
When we work on the second order estimate, the extra term in the equation cause the trouble, we overcome it by using the linearisation operator $L$ as the elliptic operator.
Then we compute $$(\p_t-L )(\log A-C\vphi).$$
Let $$B=g_\vphi^{p\bar q}\chi_{p\bar q}.$$
We have
\begin{align}\label{bij}
B_{i\bar j}&=[g_\vphi^{p\bar q}\chi_{p\bar q}]_{i\bar j}
=-(g^{r\bar q}_\vphi g^{p\bar s}_\vphi (g_{\vphi r\bar s})_i)_{\bar j}\chi_{p\bar q}
-g^{p\bar q}_\vphi R_{p\bar q i\bar j}(\chi)\\
&=[-g^{r\bar q}_\vphi g^{p\bar s}_\vphi (g_{\vphi r\bar s})_{i\bar j}
+g^{r\bar q}_\vphi g^{p\bar b}_\vphi g^{a\bar s}_\vphi (g_{\vphi a\bar b})_{\bar j}(g_{\vphi r\bar s})_{i}\nonumber\\
&+g^{r\bar b}_\vphi g^{a\bar q}_\vphi g^{p\bar s}_\vphi(g_{\vphi a\bar b})_{\bar j}(g_{\vphi r\bar s})_i]\chi_{p\bar q}
-g^{p\bar q}_\vphi R_{p\bar q i\bar j}(\chi)\nonumber.
\end{align}
So using the flow equation,
\begin{align}\label{pta}
\p_t A&= \chi^{i\bar j}\dot\vphi_{i\bar j}\\
&=\chi^{i\bar j}[-c_\beta+g_\vphi^{p\bar q}\chi_{p\bar q}
 -\frac{\b}{V}\frac{\om^n}{\om^n_\vphi}]_{i\bar j}\nonumber\\
& =\chi^{i\bar j}  [-g^{r\bar q}_\vphi g^{p\bar s}_\vphi (g_{\vphi r\bar s})_{i\bar j}
+g^{r\bar q}_\vphi g^{p\bar b}_\vphi g^{a\bar s}_\vphi (g_{\vphi a\bar b})_{\bar j}(g_{\vphi r\bar s})_{i}\nonumber\\
&
+g^{r\bar b}_\vphi g^{a\bar q}_\vphi g^{p\bar s}_\vphi(g_{\vphi a\bar b})_{\bar j}(g_{\vphi r\bar s})_i]\chi_{p\bar q}
-g^{p\bar q}_\vphi R_{p\bar q }(\chi)
 -\frac{\b}{V}(\frac{\om^n}{\om^n_\vphi})_{i\bar j}\chi^{i\bar j}\nonumber.
\end{align}
Then computing under normal coordinate of $\om$,
\begin{align}\label{volij}
(\frac{\om^n}{\om^n_\vphi})_{i\bar j}
&=[g^{k\bar l}(g_{k\bar l})_i\om^n(\om^n_\vphi)^{-1}-\om^n (\om^n_\vphi)^{-1} g_\vphi^{k\bar l}(g_{\vphi k\bar l})_i]_{\bar j}\\
&=-g^{k\bar l}R_{k\bar l i \bar j}(\om)\om^n(\om^n_\vphi)^{-1}
+\om^n(\om^n_\vphi)^{-1} g_\vphi^{p\bar q}(g_{\vphi p\bar q})_{\bar j}g_\vphi^{k\bar l}(g_{\vphi k\bar l})_i\nonumber\\
&+\om^n (\om^n_\vphi)^{-1} g_\vphi^{k\bar q}g_\vphi^{p\bar l}(g_{\vphi p\bar q})_{\bar j}(g_{\vphi k\bar l})_i
-\om^n (\om^n_\vphi)^{-1} g_\vphi^{k\bar l}(g_{\vphi k\bar l})_{i\bar j}\nonumber.
\end{align}
Again,
\begin{align}\label{akl}
 A_{k\bar l}
&= [\chi^{p\bar q}g_{\vphi p\bar q}]_{k\bar l}\\
&=  {R^{p\bar q}}_{k\bar l}(\chi) g_{\vphi p\bar q}+\chi^{p\bar q}(g_{\vphi p\bar q})_{k\bar l}\nonumber.
\end{align}
Furthermore, from the flow equation,
\begin{align}\label{ptl}
&(\p_t-L)\vphi\\
&=-c_\beta+g_\vphi^{i\bar j}\chi_{i\bar j}
 -\frac{\b}{V}\frac{\om^n}{\om^n_\vphi}+[g_\vphi^{k\bar j}g_\vphi^{i\bar l}\chi_{i\bar j}-\frac{\b}{V} \frac{\om^n}{\om^n_\vphi}g_\vphi^{ k\bar l}] \vphi_{k\bar l}\nonumber\\
&=-c_\beta+2g_\vphi^{i\bar j}\chi_{i\bar j}
-g_\vphi^{k\bar j}g_\vphi^{i\bar l}\chi_{i\bar j}g_{k\bar l}
 -\frac{\b}{V}\frac{\om^n}{\om^n_\vphi}(n+1) + \frac{\b}{V}\frac{\om^n}{\om^n_\vphi}g_\vphi^{ k\bar l}g_{k\bar l}.\nonumber
\end{align}
Putting them together, we obtain
\begin{align}\label{ptLc}
&(\p_t-L)[\log A-C\vphi]\\
&=\frac{1}{A}\p_t A + g_\vphi^{k\bar j}g_\vphi^{i\bar l}\chi_{i\bar j} (\frac{A_{k\bar l}}{A}-\frac{A_kA_{\bar l}}{A^2})
-\frac{\b}{V}\frac{\om^n}{\om^n_\vphi}g_\vphi^{ k\bar l}
(\frac{A_{k\bar l}}{A}-\frac{A_kA_{\bar l}}{A^2})\nonumber\\
&-C[(\p_t-L)\vphi]\nonumber\\
&=\frac{\p_t A+g_\vphi^{k\bar j}g_\vphi^{i\bar l}\chi_{i\bar j}A_{k\bar l}}{A} -\frac{g_\vphi^{k\bar j}g_\vphi^{i\bar l}\chi_{i\bar j}A_kA_{\bar l}}{A^2}\nonumber\\
&-\frac{\b}{V}\frac{\om^n}{\om^n_\vphi}
\frac{g_\vphi^{ k\bar l}A_{k\bar l}}{A}+\frac{\b}{V}\frac{\om^n}{\om^n_\vphi}\frac{g_\vphi^{ k\bar l}A_kA_{\bar l}}{A^2}\nonumber\\
&-C[-c_\beta+2g_\vphi^{i\bar j}\chi_{i\bar j}
-g_\vphi^{k\bar j}g_\vphi^{i\bar l}\chi_{i\bar j}g_{k\bar l}
 -\frac{\b}{V}\frac{\om^n}{\om^n_\vphi}(n+1) + \frac{\b}{V}\frac{\om^n}{\om^n_\vphi}g_\vphi^{ k\bar l}g_{k\bar l}]\nonumber.
\end{align}
The first line in the last identity is,
\begin{align}
&\frac{\p_t A+g_\vphi^{k\bar j}g_\vphi^{i\bar l}\chi_{i\bar j}A_{k\bar l}}{A} -\frac{g_\vphi^{k\bar j}g_\vphi^{i\bar l}\chi_{i\bar j}A_kA_{\bar l}}{A^2}\nonumber\\
&= \frac{1}{A}[ -\chi^{i\bar j} g^{r\bar q}_\vphi g^{p\bar s}_\vphi (g_{\vphi r\bar s})_{i\bar j}\chi_{p\bar q}
+\chi^{i\bar j} g^{r\bar q}_\vphi g^{p\bar b}_\vphi g^{a\bar s}_\vphi (g_{\vphi a\bar b})_{\bar j}(g_{\vphi r\bar s})_{i}\chi_{p\bar q}\nonumber\\
&+\chi^{i\bar j} g^{r\bar b}_\vphi g^{a\bar q}_\vphi g^{p\bar s}_\vphi(g_{\vphi a\bar b})_{\bar j}(g_{\vphi r\bar s})_i\chi_{p\bar q}
-g^{p\bar q}_\vphi R_{p\bar q }(\chi)
 -\frac{\b}{V}(\frac{\om^n}{\om^n_\vphi})_{i\bar j}\chi^{i\bar j}]\nonumber\\
&+\frac{1}{A} g_\vphi^{k\bar j}g_\vphi^{i\bar l}\chi_{i\bar j}[ {R^{p\bar q}}_{k\bar l}(\chi) g_{\vphi p\bar q}+\chi^{p\bar q}(g_{\vphi p\bar q})_{k\bar l}] -\frac{g_\vphi^{k\bar j}g_\vphi^{i\bar l}\chi_{i\bar j}A_kA_{\bar l}}{A^2}\nonumber\\
&= \frac{1}{A}\{\chi^{i\bar j} g^{r\bar q}_\vphi g^{p\bar b}_\vphi g^{a\bar s}_\vphi (g_{\vphi a\bar b})_{\bar j}(g_{\vphi r\bar s})_{i}\chi_{p\bar q}
+\chi^{i\bar j} g^{r\bar b}_\vphi g^{a\bar q}_\vphi g^{p\bar s}_\vphi(g_{\vphi a\bar b})_{\bar j}(g_{\vphi r\bar s})_i\chi_{p\bar q}\label{3order0}\\
&-g^{p\bar q}_\vphi R_{p\bar q }(\chi) +\frac{\b}{V}\chi^{i\bar j}g^{k\bar l}R_{k\bar l i \bar j}(\om)\frac{\om^n}{\om^n_\vphi}\nonumber\\
&
-\frac{\b}{V}\chi^{i\bar j}\frac{\om^n}{\om^n_\vphi} g_\vphi^{p\bar q}(g_{\vphi p\bar q})_{\bar j}g_\vphi^{k\bar l}(g_{\vphi k\bar l})_i
-\frac{\b}{V}\chi^{i\bar j}\frac{\om^n}{\om^n_\vphi} g_\vphi^{k\bar q}g_\vphi^{p\bar l}(g_{\vphi p\bar q})_{\bar j}(g_{\vphi k\bar l})_i\label{3order1}\\
&
+\frac{\b}{V}\chi^{i\bar j}\frac{\om^n}{\om^n_\vphi} g_\vphi^{k\bar l}(g_{\vphi k\bar l})_{i\bar j}\}
+\frac{1}{A} g_\vphi^{k\bar j}g_\vphi^{i\bar l}\chi_{i\bar j}{R^{p\bar q}}_{k\bar l}(\chi) g_{\vphi p\bar q}\nonumber\\
&-\frac{g_\vphi^{k\bar j}g_\vphi^{i\bar l}\chi_{i\bar j}A_kA_{\bar l}}{A^2}\label{3order3}.
\end{align}
Here we use the identity to cancel the first term in the 2rd line and the second term in the 4th line,
\begin{align*}
(g_{\vphi p\bar q})_{k\bar l}=R_{p\bar qk\bar l}+\frac{\p^4}{\p z^{p}\p z^{\bar q}\p z^{k} \p z^{\bar l}}\vphi=R_{k\bar lp\bar q}+\frac{\p^4}{\p z^{p}\p z^{\bar q}\p z^{k} \p z^{\bar l}}\vphi=(g_{\vphi k\bar l})_{p\bar q}.
\end{align*}
The second line in the last identity in \eqref{ptLc} is
\begin{align*}
-\frac{\b}{V}\frac{\om^n}{\om^n_\vphi}
\frac{g_\vphi^{ k\bar l}[ {R^{p\bar q}}_{k\bar l}(\chi) g_{\vphi p\bar q}+\chi^{p\bar q}(g_{\vphi p\bar q})_{k\bar l}]}{A}+\frac{\b}{V}\frac{\om^n}{\om^n_\vphi}\frac{g_\vphi^{ k\bar l}A_kA_{\bar l}}{A^2}.
\end{align*}
In order to annihilate the 2nd term with 2nd term in \eqref{3order1} and 2nd term in \eqref{3order0} with \eqref{3order3}, we need the lemma,
\begin{lem}The following lemma holds.
\begin{align*}
&[\chi^{i\bar j} g_\vphi^{k\bar q}g_\vphi^{p\bar l}(g_{\vphi p\bar q})_{\bar j}(g_{\vphi k\bar l})_i]A\geq g_\vphi^{ k\bar l}A_kA_{\bar l},\\
&[\chi^{i\bar j} g^{r\bar b}_\vphi g^{a\bar q}_\vphi g^{p\bar s}_\vphi(g_{\vphi a\bar b})_{\bar j}(g_{\vphi r\bar s})_i\chi_{p\bar q}]A
\geq g_\vphi^{k\bar j}g_\vphi^{i\bar l}\chi_{i\bar j}A_kA_{\bar l}.\end{align*}
\end{lem}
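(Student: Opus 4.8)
The plan is to establish both inequalities pointwise, by choosing a convenient holomorphic frame and reducing each to an elementary Cauchy--Schwarz inequality for the third derivatives of $\vphi$. Fix a point and choose coordinates that are normal for $\chi$ there --- so that $(\chi^{p\bar q})_k=0$ and $(\chi^{p\bar q})_{k\bar l}=R^{p\bar q}_{k\bar l}(\chi)$, exactly as already used in \eqref{akl} --- and that simultaneously diagonalise $\om_\vphi$. Since $-\chi>0$ we may write $g_{\vphi i\bar j}=\delta_{ij}$ and $\chi_{i\bar j}=-\nu_i\delta_{ij}$ with all $\nu_i>0$, hence $\chi^{i\bar j}=-\nu_i^{-1}\delta_{ij}$ and $A=-\sum_i\nu_i^{-1}$. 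In this frame the first derivative simplifies to $A_k=\chi^{p\bar q}(g_{\vphi p\bar q})_k=-\sum_p\nu_p^{-1}\vphi_{p\bar pk}$, and using $\overline{\vphi_{r\bar pi}}=\vphi_{p\bar r\bar i}$ both bracketed quantities on the left collapse to manifestly nonnegative weighted sums of the $|\vphi_{r\bar pi}|^2$.

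First I would treat the first inequality. After the reduction its left-hand side equals $|A|\sum_{i,k,p}\nu_i^{-1}|\vphi_{k\bar pi}|^2$, while its right-hand side equals $\sum_k|A_k|^2$. For each fixed $k$ the Cauchy--Schwarz inequality gives
\[
\Big|\sum_p\nu_p^{-1}\vphi_{p\bar pk}\Big|^2\le\Big(\sum_p\nu_p^{-1}\Big)\Big(\sum_p\nu_p^{-1}|\vphi_{p\bar pk}|^2\Big),
\]
so that $\sum_k|A_k|^2\le|A|\sum_{k,p}\nu_p^{-1}|\vphi_{p\bar pk}|^2$. The diagonal sum on the right consists precisely of the terms of $\sum_{i,k,p}\nu_i^{-1}|\vphi_{k\bar pi}|^2$ with $i=p$ (using the symmetry $\vphi_{k\bar pp}=\vphi_{p\bar pk}$); as all remaining terms are nonnegative, the inequality follows after multiplying by $|A|=\sum_i\nu_i^{-1}$.

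The second inequality I would attack in the same spirit: after diagonalisation the left bracket becomes $\sum_{i,r,p}(\nu_p/\nu_i)|\vphi_{r\bar pi}|^2$ and the right-hand side is governed by $\sum_k\nu_k|A_k|^2$, so again the aim is to absorb the gradient terms $A_kA_{\bar l}$ into the diagonal part of the bracket by a weighted Cauchy--Schwarz and then discard the surviving nonnegative off-diagonal contributions. Before applying Cauchy--Schwarz I would symmetrise using $\vphi_{r\bar pi}=\vphi_{i\bar pr}$, so as to pair the eigenvalue weights $\nu_p/\nu_i$ against the factor $\nu_k$ produced by the extra $\chi_{i\bar j}$ on the right.

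The main obstacle lies entirely in this weight matching for the second estimate. One must verify that each diagonal derivative $\vphi_{p\bar pk}$ reconstituting $A_k$ is charged inside the left bracket with an eigenvalue weight dominating the weight $\nu_k\nu_p^{-1}$ it acquires after the Cauchy--Schwarz step; unlike the first inequality, here the additional factors $\chi_{i\bar j}$ invert the role of the eigenvalues (the bracket supplies ratios $\nu_p/\nu_i$ while the right-hand side demands $\nu_k/\nu_p$), so the comparison of ratios must be arranged to fall on the correct side. This is exactly the place where the positivity $-\chi>0$ and the simultaneous diagonalisability of $\chi$ and $\om_\vphi$ are indispensable, and it is the one step that requires genuine care rather than routine manipulation.
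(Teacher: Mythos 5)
Your treatment of the first inequality is complete and is essentially the paper's own argument: simultaneously diagonalise $\chi$ and $\om_\vphi$ at the point, reduce both sides to weighted sums of $|(g_{\vphi r\bar p})_i|^2$, apply Cauchy--Schwarz to $A_k=-\sum_p\nu_p^{-1}(g_{\vphi p\bar p})_k$, and use the K\"ahler symmetry $(g_{\vphi p\bar p})_k=(g_{\vphi k\bar p})_p$ to recognise the resulting diagonal terms as part of the full bracket. (The paper normalises $\chi$ to $-I$ where you normalise $\om_\vphi$ to $I$; this is immaterial. The only imprecision is writing $\vphi_{p\bar pk}$ for $(g_{\vphi p\bar p})_k$, since your coordinates are normal for $\chi$ rather than for $\om$, but the argument only uses the symmetry of $\partial g_\vphi$, which holds regardless.)

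For the second inequality you have not given a proof: you reduce it to a weighted comparison and then explicitly defer ``the one step that requires genuine care,'' namely showing that each diagonal derivative $(g_{\vphi p\bar p})_k$ is charged inside the bracket with a weight dominating $\nu_k/\nu_p$. That deferred step \emph{is} the second inequality, and the route you sketch cannot close it: after Cauchy--Schwarz the component $|(g_{\vphi p\bar p})_k|^2$ must be absorbed with weight $\nu_k/\nu_p$, while the bracket $\sum_{i,r,p}(\nu_p/\nu_i)|(g_{\vphi r\bar p})_i|^2$ supplies, through the two symmetric occurrences $(r,p,i)=(p,p,k)$ and $(k,p,p)$ of that component, only the total weight $\nu_p/\nu_k+1$, and $1+\nu_p/\nu_k\ge\nu_k/\nu_p$ fails when $\nu_k\gg\nu_p$. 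The paper does not carry out this step either; it invokes Lemma~3.2 of \cite{MR2104082}, where the required weighted inequality is proved by a genuinely more delicate argument than a single Cauchy--Schwarz, and you would need to reproduce or cite that. Be aware, finally, that the displayed second inequality cannot be taken at face value: with $\chi^{i\bar j}$ the inverse of the negative form $\chi_{i\bar j}$ one has $A<0$, the bracket is nonnegative and the right-hand side equals $-\sum_k\nu_k|A_k|^2$, so prescribing third derivatives with all $(g_{\vphi p\bar p})_k=0$ but some $(g_{\vphi r\bar p})_i\neq0$ makes the left side strictly negative and the right side zero. The inequality actually needed to discard the corresponding terms in \eqref{ptLc} is the reversed one, and that reversed inequality is exactly the nontrivial estimate of \cite{MR2104082}; your difficulty in ``arranging the comparison of ratios to fall on the correct side'' is a symptom of this sign reversal rather than something a more careful symmetrisation will fix.
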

\begin{proof}
Under the normal chordate of $\chi$ which is negative-defined, and $\om_\chi$ is diagonalized, the first inequality becomes,
\begin{align*}
&[g_\vphi^{k\bar q}g_\vphi^{p\bar l}\sum_{i}(g_{\vphi p\bar q})_{i}(g_{\vphi k\bar l})_i]\sum_{i}g_{\vphi i\bar i}\geq g_\vphi^{ k\bar k}(\sum_{i}g_{\vphi i\bar i})_k(\sum_{i}g_{\vphi i\bar i})_{\bar k}.
\end{align*}
This follows from the H\"older's inequality.
The second inequality is proved in Lemma 3.2 in \cite{MR2104082}.
\end{proof}
Thus \eqref{ptLc} becomes
\begin{align}\label{ptLcs}
&(\p_t-L)[\log A-C\vphi]\\
&= \frac{1}{A}\{-g^{p\bar q}_\vphi R_{p\bar q }(\chi) +\frac{\b}{V}\chi^{i\bar j}g^{k\bar l}R_{k\bar l i \bar j}(\om)\frac{\om^n}{\om^n_\vphi}\}\nonumber\\
&
+\frac{1}{A} g_\vphi^{k\bar j}g_\vphi^{i\bar l}\chi_{i\bar j}{R^{p\bar q}}_{k\bar l}(\chi) g_{\vphi p\bar q}-\frac{\b}{V}\frac{\om^n}{\om^n_\vphi}
\frac{g_\vphi^{ k\bar l} {R^{p\bar q}}_{k\bar l}(\chi) g_{\vphi p\bar q}}{A}\nonumber\\
&-C[-c_\beta+2g_\vphi^{i\bar j}\chi_{i\bar j}
-g_\vphi^{k\bar j}g_\vphi^{i\bar l}\chi_{i\bar j}g_{k\bar l}
 -\frac{\b}{V}\frac{\om^n}{\om^n_\vphi}(n+1) + \frac{\b}{V}\frac{\om^n}{\om^n_\vphi}g_\vphi^{ k\bar l}g_{k\bar l}].\nonumber
\end{align}
Since $\om_\vphi$ has lower bound from Subsection \ref{2ndlower}, the first four terms and the 4th term in the last line are bounded above by constant $C_1$, thus at the maximum point $p$ of $\log A-C\vphi$,
\begin{align*}
0\leq C_1 - C [-c_\beta+2g_\vphi^{i\bar j}\chi_{i\bar j}
-g_\vphi^{k\bar j}g_\vphi^{i\bar l}\chi_{i\bar j}g_{k\bar l}].
\end{align*}
Written in the normal co-ordinate where $\chi$ has negative diagonal $\mu_i$, it becomes
\begin{align}\label{mainineq}
0\leq C_1-C[-c_\b+2\sum_{i=1}^n\frac{\mu_i}{\l_i} - \sum_{i=1}^n\frac{\mu_i}{\l_i^2}].
\end{align}
From the condition,
 \begin{align*}
(-nc_\b\cdot\om+(n-1)\chi)\wedge\om^{n-2}>0,
\end{align*}
We have there is positive constant $\delta$ such that
 \begin{align*}
(-nc_\b\cdot\om+(n-1)\chi)\wedge\om^{n-2}\geq\delta\om^{n-1},
\end{align*}
then
 \begin{align*}
-c_\b+\sum_{i=1,i \neq k}^n\mu_i\geq \delta.
\end{align*}
From \eqref{mainineq}, we have for large $C$,
\begin{align*}
-c_\b+2\sum_{i=1}^n\frac{\mu_i}{\l_i} - \sum_{i=1}^n\frac{\mu_i}{\l_i^2} \leq \frac{C_1}{C}\leq 0.5\delta.
\end{align*}
We choose $1\leq k\leq n$ and consider,
\begin{align*}
0&\geq \sum_{i=1,i\neq k}^n\mu_i(\frac{1}{\l_i}-1)^2+\frac{\mu_k}{\l_k^2}\\
&=c_\b-2\sum_{i=1}^n\frac{\mu_i}{\l_i} + \sum_{i=1}^n\frac{\mu_i}{\l_i^2}-[c_\b-\sum_{i=1,i\neq k}^n\mu_i-2\frac{\mu_k}{\l_k}]\\
&\geq -0.5\delta+\delta+2\frac{\mu_k}{\l_k}.
\end{align*}
Thus,
\begin{align*}
\l_k\leq \frac{-4\mu_k}{\delta},
\end{align*}
or at $p$,
\begin{align*}
\om_\vphi\leq \frac{-4}{\delta}\chi.
\end{align*}
Therefore, we obtain that at any $x\in M$
\begin{align*}
\log A(x)-C\vphi(x)\leq \log A(p)-C\vphi(p),
\end{align*}
then,
\begin{align*}
\log A(x)\leq \log \frac{4n}{\delta}-C\cdot(\vphi-\inf\vphi).
\end{align*}
Therefore, there is constant $C$ such that
\begin{align}\label{2ndupper}
\om_\vphi\leq e^{C_1\cdot(\vphi-\inf\vphi)}.
\end{align}
\subsection{Zero order estimate}
It suffices to obtain the iteration formula.
Letting $$C_2=\max\{1,-\dot\vphi-c_\beta+1\}$$ from \eqref{jflow}, we have
 \begin{align*}
\om_\vphi^n\leq(\dot\vphi+c_\beta+C_2)\om_\vphi^n=n\om_\vphi^{n-1}\wedge\chi-\frac{\b}{V}\om^n+C_2\om_\vphi^n.
 \end{align*}
We compute that
 \begin{align}\label{zeroindentity}
&\om_\vphi^n-\om_\vphi^{n-1}\wedge\om\\
&\leq(\dot\vphi+c_\beta+C_2)\om_\vphi^n-\om_\vphi^{n-1}\wedge\om\nonumber\\
&=n\om_\vphi^{n-1}\wedge\chi-\frac{\b}{V}\om^n+C_2\om_\vphi^n-\om_\vphi^{n-1}\wedge\om\nonumber.
 \end{align}
 Then we let $\phi=\vphi-\inf\vphi$ and $u=e^{-C_3\phi}$, we multiply \eqref{zeroindentity} with $u$ and itegrate over $M$.
The right hand side becomes,
 \begin{align*}
&\int_M u [\om_\vphi^n-\om_\vphi^{n-1}\wedge\om]\\
&=\int_M e^{-C_3\phi} [\om_\vphi^n-\om_\vphi^{n-1}\wedge\om]\\
&=C_3\int_M e^{-C_3\phi} \p\vphi\wedge\bar\p\vphi\wedge\om_\vphi^{n-1}\\
&=C_3\int_M e^{-\frac{C_3}{2}\phi} \p\vphi\wedge e^{-\frac{C_3}{2}\phi} \bar\p\vphi\wedge\om_\vphi^{n-1}\\
&=\frac{4}{C_3}\int_M  \p u^{\frac{1}{2}}\wedge \bar\p u^{\frac{1}{2}}\wedge\om_\vphi^{n-1}\\
&\geq\frac{C_4}{C_3}\int_M | \p u^{\frac{1}{2}}|_\om^2\om^{n}.
 \end{align*}
In the last inequality we used the lower bound of $\om_\vphi$.
 While, the right hand side is
  \begin{align*}
&\int_M u [n\om_\vphi^{n-1}\wedge\chi-\frac{\b}{V}\om^n+C_2\om_\vphi^n-\om_\vphi^{n-1}\wedge\om]\\
&\leq C_2\int_M u \om_\vphi^n\\
&\leq C_2\int_M e^{-C_3\phi}  e^{C_1\cdot(\vphi-\inf\vphi)} \om^n\\
&\leq C_2\int_M e^{-C_3\phi}  e^{C_1\cdot\phi}e^{-C_1\cdot\inf\phi} \om^n\\
&\leq C_2 ||u||_0^\frac{C_1}{C_3} \int_M e^{C_3(-1+\frac{C_1}{C_3})\cdot\phi} \om^n.
 \end{align*}
We apply \eqref{2ndupper} in the second inequality.
Let $v=e^{-C_5\phi}$. We choose $C_3=p C_5$ and $\frac{C_1}{C_5}=1-\delta$, we thus obtain
  \begin{align*}
\int_M | \p v^{\frac{p}{2}}|_\om^2\om^{n}&\leq p C_6 ||v||_0^{1-\delta}\int_M  e^{C_5(-p+1-\delta)\phi} \om^n\\
&\leq p C_6 ||v||_0^{1-\delta}\int_M  v^{C_5(p-1+\delta)} \om^n.
 \end{align*}
Thus the zero order estimate follows from the iteration Lemma 3.3 in \cite{MR2226957}.
%%%%%%%%%%%%%%%%%%%%%%
%%%%%%%%%%%%%%%%%%%%%%
%%%%%%%%%%%%%%%%%%%%%%
%%%%%%%%%%%%%%%%%%%%%%

%\subsection{$K$-Stability}
%%%%%%%%%%%%%%%%%%%%%%%%%%%%%%%%%%%%%%%%%%%%%%%%%%%%%%%%%%%%%%%%%%%%%%%%%%%%%%%%%%%%%%%%%%%%%%%%%%%%%%%%%%%%%%%%%%%%%%%%%%%%%%%%

%%%%%%%%%%%%%%%%%%%%%%%%%%%%%%%%%%%%%%%%%%%%%%%%%%%%%%%%%%%%%%%%%%%%%%%%%%%%%%%%%%%%%%%%%%%%%%%%%%%%%%%%%%%%%%%%%%%%%%%%%%%%%%%%

%%%%%%%%%%%%%%%%%%%%%%%%%%%%%%%%%%%%%%%%%%%%%%%%%%%%%%%%%%%%%%%%%%%%%%%%%%%%%%%%%%%%

\bibliography{bib}
\bibliographystyle{plain}
\end{document}